\date{}
\renewcommand{\uppercasenonmath}[1]{}
\numberwithin{equation}{section} \theoremstyle{plain}
\newtheorem{lem}{Lemma}[section]
\newtheorem{cor}[lem]{Corollary}
\newtheorem{prop}[lem]{Proposition}
\newtheorem{thm}[lem]{Theorem}
\newtheorem{cond}[lem]{Condition}
\newtheorem{definition}[lem]{Definition}
\newtheorem{Ex}[lem]{Example}
\newtheorem{Quest}[lem]{Question}
\newtheorem{Property}[lem]{Property}
\newtheorem{Properties}[lem]{Properties}
\newtheorem{Subprops}{}[lem]
\newtheorem{Para}[lem]{}
\newtheorem{rem}[lem]{Remark}
\newtheorem*{ack*}{ACKNOWLEDGEMENTS}
\newcommand{\pf}{\noindent\begin {proof}}
\newcommand{\epf}{\end{proof}}
\newcommand{\ra}{\rightarrow}
\newcommand{\Add}{\mbox{\rm Add}}
\newcommand{\add}{\mbox{\rm add}}
\begin{document}
\begin{center}
{\Large  \bf  Complete cohomology for extriangulated categories}

\vspace{0.5cm}  Jiangsheng Hu, Dongdong Zhang\footnote{Corresponding author. \\ Jiangsheng Hu was supported by the NSF of China (Grants Nos. 11671069, 11771212), Qing Lan Project of Jiangsu Province and Jiangsu Government Scholarship for Overseas Studies (JS-2019-328). Tiwei Zhao was supported by the NSF of China (Grants Nos. 11971225, 11901341) and the project ZR2019QA015 supported by Shandong Provincial Natural Science Foundation. Panyue Zhou was supported by the National Natural Science Foundation of China (Grant Nos. 11901190, 11671221),  the Hunan Provincial Natural Science Foundation of China (Grant No. 2018JJ3205) and  the Scientific Research Fund of Hunan Provincial Education Department (Grant No. 19B239).}, Tiwei Zhao and Panyue Zhou
\end{center}
\medskip
\medskip
\bigskip
\centerline { \bf  Abstract}
\medskip
\leftskip10truemm \rightskip10truemm \noindent
\hspace{1em}Let $(\mathcal{C},\mathbb{E},\mathfrak{s})$ be an extriangulated category with a proper class $\xi$ of
$\mathbb{E}$-triangles. In this paper, we study complete cohomology of objects in  $(\mathcal{C},\mathbb{E},\mathfrak{s})$ by applying $\xi$-projective resolutions and $\xi$-injective coresolutions constructed in  $(\mathcal{C},\mathbb{E},\mathfrak{s})$.  Vanishing of complete cohomology detects objects with finite $\xi$-projective dimension and  finite $\xi$-injective dimension.
As a consequence, we obtain some criteria for the validity of the Wakamatsu Tilting Conjecture and give a necessary and sufficient condition for
a virtually Gorenstein algebra to be Gorenstein. Moreover, we give a general technique for computing complete cohomology of objects with finite $\xi$-$\mathcal{G}$projective dimension. As an application, the relationships between $\xi$-projective dimensions and $\xi$-$\mathcal{G}$projective dimensions for objects in $(\mathcal{C},\mathbb{E},\mathfrak{s})$ are given.\\[2mm]
{\bf Keywords:} complete cohomology; extriangulated category; proper class; Gorenstein projective dimension.\\
{\bf 2010 Mathematics Subject Classification:} 18E30; 18E10; 18G25; 55N20

\leftskip0truemm \rightskip0truemm
%\bigskip
\section { \bf Introduction}
%\bigskip
Tate cohomology was established in the 1950s, based on Tate's observation that the $\mathbb{Z}$G-module $\mathbb{Z}$ with the trivial action admits a complete projective resolution \cite{CE}. It was further extended by Avramov and Martsinkovsky  \cite{AM} to finitely generated modules of finite Gorenstein dimension over Noetherian rings and by Veliche \cite{vg} to
complexes of finite Gorenstein projective dimension. On the other hand, Mislin \cite{Mislin}, Benson and
Carlson \cite{BF} and Vogel (first published account in  \cite{Goichot}), independently, generalized the theory to arbitrary groups. It was shown that these theories are isomorphic, and complete cohomology is a common name for them. The theory has also been extended to the setting of unbounded complexes by Asadollahi and Salarian \cite{ASCT}.

Beligiannis developed in \cite{Bel1} a relative version of homological algebra in triangulated categories in analogy to relative homological algebra in abelian categories, in which the notion of a proper class of exact sequences is replaced by a proper class of triangles. By specifying a class of triangles $\xi$, which is called a proper class of triangles, he introduced $\xi$-projective and $\xi$-injective objects. In an attempt to extend the theory, Asadollahi and Salarian \cite{AS1} introduced and studied $\xi$-Gorenstein projective, injective objects, and corresponding $\xi$-Gorenstein
dimensions in triangulated categories by modifying what Enochs, Jenda \cite{EJ2}
and Holm \cite{hG} had done in the category of modules. Motivated by the properties of Tate cohomology in the category of groups, Asadollahi and Salarian \cite{AS2} developed a Tate cohomology theory in a triangulated category $\mathcal{C}$ with a proper class of triangles, which was done for objects of finite $\xi$-Gorenstein projective dimension. They showed that
this theory not only shares basic properties with ordinary cohomology, but also enjoys
some distinctive features. We refer to \cite{RL2,RL3} for a more discussion on this matter.

The notion of extriangulated categories was introduced by Nakaoka and Palu in \cite{NP} as a simultaneous generalization of
exact categories and triangulated categories. Exact categories and extension closed subcategories of an
extriangulated category are extriangulated categories, while there exist some other examples of extriangulated categories which are neither exact nor triangulated, see \cite{HZZ,NP,ZZ}. Hence many results  on exact categories
and triangulated categories can be unified in the same framework.

Let $(\mathcal{C},\mathbb{E},\mathfrak{s})$  be an extriangulated category with a proper class $\xi$ of $\mathbb{E}$-triangles. The authors \cite{HZZ} studied a relative homological algebra in $(\mathcal{C},\mathbb{E},\mathfrak{s})$ which parallels the relative homological algebra in a triangulated category. By specifying a class of $\mathbb{E}$-triangles, which is called a proper class $\xi$ of
$\mathbb{E}$-triangles, the authors introduced $\xi$-projective dimensions and $\xi$-$\mathcal{G}$projective dimensions,
and discussed their properties.

In this paper we attempt to develop a complete cohomology theory in an extriangulated category  $(\mathcal{C},\mathbb{E},\mathfrak{s})$ and demonstrate that this theory shares some basic properties of complete cohomology groups  in the category of modules category and Tate cohomology groups in the triangulated category.

We now outline the results of the paper. In Section 2, we summarize some preliminaries
and basic facts about extriangulated categories which will be used throughout the paper.

In Section 3, for a given extriangulated category $(\mathcal{C}, \mathbb{E}, \mathfrak{s})$ with a proper class $\xi$ of $\mathbb{E}$-triangles, we define $\xi$-complete cohomology groups based on $\xi$-projective resolutions and $\xi$-injective coresolutions of objects in $(\mathcal{C}, \mathbb{E}, \mathfrak{s})$ (see Definition \ref{df:3.6}). It is proved that vanishing of $\xi$-complete cohomology characterizes the finiteness of $\xi$-projective dimension and $\xi$-injective dimension of objects in $(\mathcal{C}, \mathbb{E}, \mathfrak{s})$ (see Theorem \ref{thm1}). As consequences, we give some criteria for the validity of the Wakamatsu Tilting Conjecture; moreover, we give a necessary and sufficient condition for a virtually Gorenstein algebra to be Gorenstein (see Corollaries \ref{cor2} and \ref{cor3}).

In Section 4, we show first that an object $M$ in  an extriangulated category $(\mathcal{C}, \mathbb{E}, \mathfrak{s})$  has finite $\xi$-$\mathcal{G}$projective dimension if and only if $M$ admits a (split) $\xi$-complete resolution (see Proposition \ref{prop:3.15}). This lets us give a general technique for computing $\xi$-complete cohomology
of objects with finite $\xi$-$\mathcal{G}$projective dimension (see Theorem \ref{thm2}). As an application,
 the relationships between $\xi$-projective dimensions and
$\xi$-$\mathcal{G}$projective dimensions for objects in $(\mathcal{C},\mathbb{E},\mathfrak{s})$ are given (see Corollary \ref{cor:3.3}).

\section{\bf Preliminaries}
Throughout this paper, we always assume that  $\mathcal{C}=(\mathcal{C}, \mathbb{E}, \mathfrak{s})$ is an extriangulated category and $\xi$ is a proper class of $\mathbb{E}$-triangles in  $\mathcal{C}$.  We also assume that the extriangulated category $\mathcal{C}$ has enough $\xi$-projectives and enough $\xi$-injectives satisfying Condition (WIC). Next we briefly recall some definitions and basic properties of extriangulated categories from \cite{NP}.
We omit some details here, but the reader can find them in \cite{NP}.

Let $\mathcal{C}$ be an additive category equipped with an additive bifunctor
$$\mathbb{E}: \mathcal{C}^{\rm op}\times \mathcal{C}\rightarrow {\rm Ab},$$
where ${\rm Ab}$ is the category of abelian groups. For any objects $A, C\in\mathcal{C}$, an element $\delta\in \mathbb{E}(C,A)$ is called an $\mathbb{E}$-{\it extension}.
Let $\mathfrak{s}$ be a correspondence which associates an equivalence class $$\mathfrak{s}(\delta)=\xymatrix@C=0.8cm{[A\ar[r]^x
 &B\ar[r]^y&C]}$$ to any $\mathbb{E}$-extension $\delta\in\mathbb{E}(C, A)$. This $\mathfrak{s}$ is called a {\it realization} of $\mathbb{E}$, if it makes the diagrams in \cite[Definition 2.9]{NP} commutative.
 A triplet $(\mathcal{C}, \mathbb{E}, \mathfrak{s})$ is called an {\it extriangulated category} if it satisfies the following conditions.
\begin{enumerate}
\item $\mathbb{E}\colon\mathcal{C}^{\rm op}\times \mathcal{C}\rightarrow \rm{Ab}$ is an additive bifunctor.

\item $\mathfrak{s}$ is an additive realization of $\mathbb{E}$.

\item $\mathbb{E}$ and $\mathfrak{s}$  satisfy the compatibility conditions in \cite[Definition 2.12]{NP}.

 \end{enumerate}

\begin{rem}
Note that both exact categories and triangulated categories are extriangulated categories $($see \cite[Example 2.13]{NP}$)$ and extension closed subcategories of extriangulated categories are
again extriangulated $($see \cite[Remark 2.18]{NP}$)$. Moreover, there exist extriangulated categories which
are neither exact categories nor triangulated categories $($see \cite[Proposition 3.30]{NP}, \cite[Example 4.14]{ZZ} and \cite[Remark 3.3]{HZZ}$)$.
\end{rem}

We will use the following terminology.

\begin{definition}{ \emph{(see \cite[Definitions 2.15 and 2.19]{NP})}} {\rm
 Let $(\mathcal{C}, \mathbb{E}, \mathfrak{s})$ be an extriangulated category.
\begin{enumerate}
\item A sequence $\xymatrix@C=1cm{A\ar[r]^x&B\ar[r]^{y}&C}$ is called a {\it conflation} if it realizes some $\mathbb{E}$-extension $\delta\in\mathbb{E}(C, A)$.
In this case, $x$ is called an {\it inflation} and $y$ is called a {\it deflation}.

\item  If a conflation $\xymatrix@C=0.6cm{A\ar[r]^x&B\ar[r]^{y}&C}$ realizes $\delta\in\mathbb{E}(C, A)$, we call the pair
$\xymatrix@C=0.6cm{(A\ar[r]^x&B\ar[r]^{y}&C, \delta)}$ an {\it $\mathbb{E}$-triangle}, and write it in the following.
\begin{center} $\xymatrix{A\ar[r]^x&B\ar[r]^{y}&C\ar@{-->}[r]^{\delta}&}$\end{center}
We usually do not write this ``$\delta$" if it is not used in the argument.

\item Let $\xymatrix{A\ar[r]^x&B\ar[r]^{y}&C\ar@{-->}[r]^{\delta}&}$ and $\xymatrix{A'\ar[r]^{x'}&B'\ar[r]^{y'}&C'\ar@{-->}[r]^{\delta'}&}$
be any pair of $\mathbb{E}$-triangles. If a triplet $(a, b, c)$ realizes $(a, c): \delta\rightarrow \delta'$, then we write it as
 $$\xymatrix{A\ar[r]^{x}\ar[d]_{a}&B\ar[r]^{y}\ar[d]_{b}&C\ar[d]_{c}\ar@{-->}[r]^{\delta}&\\
 A'\ar[r]^{x'}&B'\ar[r]^{y'}&C'\ar@{-->}[r]^{\delta'}&}$$
 and call $(a, b, c)$ a {\it morphism} of $\mathbb{E}$-triangles.
\end{enumerate}}

\end{definition}

The following condition is analogous to the weak idempotent completeness in exact category (see \cite[Condition 5.8]{NP}).

\begin{cond} \label{cond:4.11} \emph{({\rm Condition (WIC)})}  Consider the following conditions.

\begin{enumerate}
\item  Let $f\in\mathcal{C}(A, B), g\in\mathcal{C}(B, C)$ be any composable pair of morphisms. If $gf$ is an inflation, then so is $f$.

\item Let $f\in\mathcal{C}(A, B), g\in\mathcal{C}(B, C)$ be any composable pair of morphisms. If $gf$ is a deflation, then so is $g$.

\end{enumerate}

\end{cond}

\begin{Ex}\label{Ex:4.12}

\emph{(1)} If $\mathcal{C}$ is an exact category, then Condition \emph{(WIC)} is equivalent to $\mathcal{C}$ is
weakly idempotent complete \emph{(see \cite[Proposition 7.6]{B"u})}.

\emph{(2)} If $\mathcal{C}$ is a triangulated category, then Condition \emph{(WIC)} is automatically satisfied.
\end{Ex}

\begin{lem}\label{lem1} \emph{(see \cite[Proposition 3.15]{NP})} Assume that $(\mathcal{C}, \mathbb{E},\mathfrak{s})$ is an extriangulated category. Let $C$ be any object, and let $\xymatrix@C=2em{A_1\ar[r]^{x_1}&B_1\ar[r]^{y_1}&C\ar@{-->}[r]^{\delta_1}&}$ and $\xymatrix@C=2em{A_2\ar[r]^{x_2}&B_2\ar[r]^{y_2}&C\ar@{-->}[r]^{\delta_2}&}$ be any pair of $\mathbb{E}$-triangles. Then there is a commutative diagram
in $\mathcal{C}$
$$\xymatrix{
    & A_2\ar[d]_{m_2} \ar@{=}[r] & A_2 \ar[d]^{x_2} \\
  A_1 \ar@{=}[d] \ar[r]^{m_1} & M \ar[d]_{e_2} \ar[r]^{e_1} & B_2\ar[d]^{y_2} \\
  A_1 \ar[r]^{x_1} & B_1\ar[r]^{y_1} & C   }
  $$
  which satisfies $\mathfrak{s}(y^*_2\delta_1)=\xymatrix@C=2em{[A_1\ar[r]^{m_1}&M\ar[r]^{e_1}&B_2]}$ and
  $\mathfrak{s}(y^*_1\delta_2)=\xymatrix@C=2em{[A_2\ar[r]^{m_2}&M\ar[r]^{e_2}&B_1].}$
%  $$m_{1*}\delta_1+m_{2*}\delta_2=0$$

\end{lem}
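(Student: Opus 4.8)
The plan is to realize $M$ as the homotopy pullback of the two deflations $y_1$ and $y_2$ over the common object $C$, and then to read off the two asserted $\mathbb{E}$-triangles as the two ways of slicing this pullback square. First I would use the bifunctoriality of $\mathbb{E}$ to form the pulled-back extension $y_2^{*}\delta_1:=\mathbb{E}(y_2,A_1)(\delta_1)\in\mathbb{E}(B_2,A_1)$, and then apply the realization $\mathfrak{s}$ to choose an $\mathbb{E}$-triangle $A_1\xrightarrow{m_1}M\xrightarrow{e_1}B_2$ with $\mathfrak{s}(y_2^{*}\delta_1)=[A_1\xrightarrow{m_1}M\xrightarrow{e_1}B_2]$; this is exactly the middle row of the asserted diagram. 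Since $(1_{A_1},y_2)\colon y_2^{*}\delta_1\to\delta_1$ is a morphism of $\mathbb{E}$-extensions, the defining property of an additive realization (together with the compatibility axioms of \cite[Definition 2.12]{NP}) lifts it to a morphism of $\mathbb{E}$-triangles $(1_{A_1},e_2,y_2)$ from this triangle down to the bottom row $A_1\xrightarrow{x_1}B_1\xrightarrow{y_1}C$. This single step already produces the map $e_2\colon M\to B_1$, the identities $e_2m_1=x_1$ and $y_1e_2=y_2e_1$, and hence the lower two rows, the left-hand column $A_1=A_1$, and both lower squares.

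It then remains to manufacture the middle column: an inflation $m_2\colon A_2\to M$ with $e_1m_2=x_2$ for which $A_2\xrightarrow{m_2}M\xrightarrow{e_2}B_1$ is a genuine $\mathbb{E}$-triangle realizing $y_1^{*}\delta_2\in\mathbb{E}(B_1,A_2)$. By the symmetric roles of $\delta_1$ and $\delta_2$, realizing $y_1^{*}\delta_2$ independently would produce an $\mathbb{E}$-triangle of precisely this shape on some center $M'$; the real content is that one may take $M'=M$, i.e. that a single object supports both triangles. To secure this I would invoke the dual octahedral-type axiom (ET4)$^{\mathrm{op}}$ of \cite{NP}, applied to the composable pair of deflations $M\xrightarrow{e_1}B_2\xrightarrow{y_2}C$ furnished by the previous step; Condition (WIC) is what guarantees that the maps in play are honest deflations, and (ET4)$^{\mathrm{op}}$ is exactly the tool that promotes the commutative square at hand to an $\mathbb{E}$-triangle, delivering the inflation $m_2$, the relation $e_1m_2=x_2$, and the $\mathbb{E}$-triangle $A_2\xrightarrow{m_2}M\xrightarrow{e_2}B_1$.

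The hard part will be the bookkeeping of $\mathbb{E}$-extensions required to show that this last $\mathbb{E}$-triangle realizes precisely $y_1^{*}\delta_2$, rather than merely some element of $\mathbb{E}(B_1,A_2)$. Concretely, one must track how the connecting extension produced by (ET4)$^{\mathrm{op}}$ is expressed through the pullback maps $y_1^{*}$ and $y_2^{*}$, using the naturality of $\mathfrak{s}$ and the bifunctoriality of $\mathbb{E}$ in both variables, so that the identity $\mathfrak{s}(y_1^{*}\delta_2)=[A_2\xrightarrow{m_2}M\xrightarrow{e_2}B_1]$ comes out on the nose. If the octahedral route proves awkward to bookkeep, the fallback I would keep in reserve is the fully symmetric construction: realize $y_1^{*}\delta_2$ to get a second candidate center $M'$ carrying the column triangle, and then prove $M\cong M'$ compatibly with all four structure maps by a uniqueness-of-pullback argument --- but there too the work concentrates in matching the two connecting extensions, so I expect the extension bookkeeping to be the genuine obstacle either way.
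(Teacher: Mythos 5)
A preliminary remark on the comparison target: the paper itself contains no proof of this lemma --- it is imported verbatim from \cite[Proposition 3.15]{NP} --- so your attempt can only be measured against Nakaoka--Palu's argument. Your first two steps are correct and are the essentially forced opening of any proof: realizing $y_2^*\delta_1$ yields the middle row $A_1\xrightarrow{m_1}M\xrightarrow{e_1}B_2$, and since $(1_{A_1},y_2)\colon y_2^*\delta_1\to\delta_1$ is a morphism of $\mathbb{E}$-extensions, the definition of a realization supplies $e_2\colon M\to B_1$ with $e_2m_1=x_1$ and $y_1e_2=y_2e_1$, hence the two lower squares. One small correction already here: Condition (WIC) plays no role --- $e_1$ is a deflation because it sits in the conflation realizing $y_2^*\delta_1$, and $y_2$ is one by hypothesis; the lemma holds in an arbitrary extriangulated category.

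The genuine gap is your third step. (ET4)$^{\mathrm{op}}$ applied to the composable deflations $M\xrightarrow{e_1}B_2\xrightarrow{y_2}C$ does not have the conclusion you attribute to it: its output is an object $E$ together with a conflation $E\to M\to C$ (a cocone of the composite $y_2e_1=y_1e_2$, realizing some $\delta''\in\mathbb{E}(C,E)$) and a conflation $A_1\to E\to A_2$ realizing $x_2^*(y_2^*\delta_1)=(y_2x_2)^*\delta_1=0$, so that $E\cong A_1\oplus A_2$; the triangle $A_2\xrightarrow{m_2}M\xrightarrow{e_2}B_1$ appears nowhere in this conclusion, since $B_1$ is not among the objects it involves. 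To extract the middle column from this data you would have to set $m_2$ equal to the composite $A_2\hookrightarrow E\to M$, apply (ET4) to this pair of composable inflations to obtain a cone $N$ of $m_2$ sitting in a conflation $A_1\to N\to C$ realizing $p_*\delta''$ (where $p\colon E\to A_1$ is a chosen splitting), and then prove $p_*\delta''=\delta_1$ in order to identify $N$ with $B_1$. But the compatibilities of (ET4)$^{\mathrm{op}}$ only give $y_2^*(p_*\delta'')=y_2^*\delta_1$, so a priori $p_*\delta''-\delta_1$ is merely an element of the image of $\mathcal{C}(A_2,A_1)\to\mathbb{E}(C,A_1)$ in the exact sequence induced by $\delta_2$; one must correct the splitting $p$ by a morphism factoring through the projection $E\to A_2$ (using $d_*\delta''=\delta_2$) before the identification holds, then still verify that the resulting extension in $\mathbb{E}(B_1,A_2)$ is exactly $y_1^*\delta_2$ and that the induced deflation $M\to B_1$ can be reconciled with the $e_2$ already constructed. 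This chain of corrections is the actual mathematical content of the lemma, and it is precisely what your outline defers as ``bookkeeping''. Your fallback has a parallel defect: in an extriangulated category the square over $C$ is only a \emph{weak} pullback (no uniqueness of induced morphisms), so there is no uniqueness-of-pullback argument to invoke; comparing the two candidate centers $M$ and $M'$ requires first knowing which extensions the two triangles realize --- the very point at issue --- before the morphism-of-$\mathbb{E}$-triangles machinery of \cite{NP} can produce an isomorphism.
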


The following definitions are quoted verbatim from \cite[Section 3]{HZZ}. A class of $\mathbb{E}$-triangles $\xi$ is {\it closed under base change} if for any $\mathbb{E}$-triangle $$\xymatrix@C=2em{A\ar[r]^x&B\ar[r]^y&C\ar@{-->}[r]^{\delta}&\in\xi}$$ and any morphism $c\colon C' \to C$, then any $\mathbb{E}$-triangle  $\xymatrix@C=2em{A\ar[r]^{x'}&B'\ar[r]^{y'}&C'\ar@{-->}[r]^{c^*\delta}&}$ belongs to $\xi$.

Dually, a class of  $\mathbb{E}$-triangles $\xi$ is {\it closed under cobase change} if for any $\mathbb{E}$-triangle $$\xymatrix@C=2em{A\ar[r]^x&B\ar[r]^y&C\ar@{-->}[r]^{\delta}&\in\xi}$$ and any morphism $a\colon A \to A'$, then any $\mathbb{E}$-triangle  $\xymatrix@C=2em{A'\ar[r]^{x'}&B'\ar[r]^{y'}&C\ar@{-->}[r]^{a_*\delta}&}$ belongs to $\xi$.

A class of $\mathbb{E}$-triangles $\xi$ is called {\it saturated} if in the situation of Lemma \ref{lem1}, whenever  \\
$\xymatrix@C=2em{A_2\ar[r]^{x_2}&B_2\ar[r]^{y_2}&C\ar@{-->}[r]^{\delta_2 }&}$
 and $\xymatrix@C=2em{A_1\ar[r]^{m_1}&M\ar[r]^{e_1}&B_2\ar@{-->}[r]^{y_2^{\ast}\delta_1}&}$
 belong to $\xi$, then the  $\mathbb{E}$-triangle $$\xymatrix@C=2em{A_1\ar[r]^{x_1}&B_1\ar[r]^{y_1}&C\ar@{-->}[r]^{\delta_1 }&}$$  belongs to $\xi$.

An $\mathbb{E}$-triangle $\xymatrix@C=2em{A\ar[r]^x&B\ar[r]^y&C\ar@{-->}[r]^{\delta}&}$ is called {\it split} if $\delta=0$. It is easy to see that it is split if and only if $x$ is section or $y$ is retraction. The full subcategory  consisting of the split $\mathbb{E}$-triangles will be denoted by $\Delta_0$.

  \begin{definition} \emph{(see \cite[Definition 3.1]{HZZ})}\label{def:proper class} {\rm  Let $\xi$ be a class of $\mathbb{E}$-triangles which is closed under isomorphisms. Then $\xi$ is called a {\it proper class} of $\mathbb{E}$-triangles if the following conditions hold:

  \begin{enumerate}
\item  $\xi$ is closed under finite coproducts and $\Delta_0\subseteq \xi$.

\item $\xi$ is closed under base change and cobase change.

\item $\xi$ is saturated.

  \end{enumerate}}
  \end{definition}

 \begin{definition} \emph{(see \cite[Definition 4.1]{HZZ})}
 {\rm An object $P\in\mathcal{C}$  is called {\it $\xi$-projective}  if for any $\mathbb{E}$-triangle $$\xymatrix{A\ar[r]^x& B\ar[r]^y& C \ar@{-->}[r]^{\delta}& }$$ in $\xi$, the induced sequence of abelian groups $\xymatrix@C=0.6cm{0\ar[r]& \mathcal{C}(P,A)\ar[r]& \mathcal{C}(P,B)\ar[r]&\mathcal{C}(P,C)\ar[r]& 0}$ is exact. Dually, we have the definition of {\it $\xi$-injective}.}
\end{definition}

We denote by $\mathcal{P(\xi)}$ (resp., $\mathcal{I(\xi)}$) the class of $\xi$-projective (resp., $\xi$-injective) objects of $\mathcal{C}$. It follows from the definition that the subcategories $\mathcal{P}(\xi)$ and $\mathcal{I}(\xi)$ are full, additive, closed under isomorphisms and direct summands.

 An extriangulated  category $(\mathcal{C}, \mathbb{E}, \mathfrak{s})$ is said to  have {\it  enough
$\xi$-projectives} \ (resp., {\it  enough $\xi$-injectives}) provided that for each object $A$ there exists an $\mathbb{E}$-triangle $\xymatrix@C=2.1em{K\ar[r]& P\ar[r]&A\ar@{-->}[r]& }$ (resp., $\xymatrix@C=2em{A\ar[r]& I\ar[r]& K\ar@{-->}[r]&}$) in $\xi$ with $P\in\mathcal{P}(\xi)$ (resp., $I\in\mathcal{I}(\xi)$).

The {\it $\xi$-projective dimension} $\xi$-${\rm pd} A$ of $A\in\mathcal{C}$ is defined inductively.
 If $A\in\mathcal{P}(\xi)$, then define $\xi$-${\rm pd} A=0$.
Next if $\xi$-${\rm pd} A>0$, define $\xi$-${\rm pd} A\leq n$ if there exists an $\mathbb{E}$-triangle
 $K\to P\to A\dashrightarrow$  in $\xi$ with $P\in \mathcal{P}(\xi)$ and $\xi$-${\rm pd} K\leq n-1$.
Finally we define $\xi$-${\rm pd} A=n$ if $\xi$-${\rm pd} A\leq n$ and $\xi$-${\rm pd} A\nleq n-1$. Of course we set $\xi$-${\rm pd} A=\infty$, if $\xi$-${\rm pd} A\neq n$ for all $n\geq 0$.

Dually we can define the {\it $\xi$-injective dimension}  $\xi$-${\rm id} A$ of an object $A\in\mathcal{C}$.

\begin{definition} \emph{(see \cite[Definition 4.4]{HZZ})}
{\rm A {\it $\xi$-exact} complex $\mathbf{X}$ is a diagram $$\xymatrix@C=2em{\cdots\ar[r]&X_1\ar[r]^{d_1}&X_0\ar[r]^{d_0}&X_{-1}\ar[r]&\cdots}$$ in $\mathcal{C}$ such that for each integer $n$, there exists an $\mathbb{E}$-triangle $\xymatrix@C=2em{K_{n+1}\ar[r]^{g_n}&X_n\ar[r]^{f_n}&K_n\ar@{-->}[r]^{\delta_n}&}$ in $\xi$ and $d_n=g_{n-1}f_n$.
}\end{definition}

\begin{definition} \emph{(see \cite[Definition 4.5]{HZZ})}
{\rm Let $\mathcal{W}$ be a class of objects in $\mathcal{C}$. An $\mathbb{E}$-triangle
$$\xymatrix@C=2em{A\ar[r]& B\ar[r]& C\ar@{-->}[r]& }$$ in $\xi$ is called to be
{\it $\mathcal{C}(-,\mathcal{W})$-exact} (resp.,
{\it $\mathcal{C}(\mathcal{W},-)$-exact}) if for any $W\in\mathcal{W}$, the induced sequence of abelian groups $\xymatrix@C=2em{0\ar[r]&\mathcal{C}(C,W)\ar[r]&\mathcal{C}(B,W)\ar[r]&\mathcal{C}(A,W)\ar[r]& 0}$ (resp., \\ $\xymatrix@C=2em{0\ar[r]&\mathcal{C}(W,A)\ar[r]&\mathcal{C}(W,B)\ar[r]&\mathcal{C}(W,C)\ar[r]&0}$) is exact in ${\rm Ab}$}.
\end{definition}

\begin{definition} \emph{(see \cite[Definition 4.6]{HZZ})}
 {\rm Let $\mathcal{W}$ be a class of objects in $\mathcal{C}$. A complex $\mathbf{X}$ is called {\it $\mathcal{C}(-,\mathcal{W})$-exact} (resp.,
{\it $\mathcal{C}(\mathcal{W},-)$-exact}) if it is a $\xi$-exact complex
$$\xymatrix@C=2em{\cdots\ar[r]&X_1\ar[r]^{d_1}&X_0\ar[r]^{d_0}&X_{-1}\ar[r]&\cdots}$$ in $\mathcal{C}$ such that  there is a $\mathcal{C}(-,\mathcal{W})$-exact (resp.,
 $\mathcal{C}(\mathcal{W},-)$-exact) $\mathbb{E}$-triangle $$\xymatrix@C=2em{K_{n+1}\ar[r]^{g_n}&X_n\ar[r]^{f_n}&K_n\ar@{-->}[r]^{\delta_n}&}$$ in $\xi$ for each integer $n$ and $d_n=g_{n-1}f_n$.

 A $\xi$-exact complex $\mathbf{X}$ is called {\it complete $\mathcal{P}(\xi)$-exact} (resp., {\it complete $\mathcal{I}(\xi)$-exact}) if it is $\mathcal{C}(-,\mathcal{P}(\xi))$-exact (resp.,
 $\mathcal{C}(\mathcal{I}(\xi),-)$-exact).}
\end{definition}

\begin{definition} \emph{(see \cite[Definition 4.7]{HZZ})}
 {\rm A  {\it complete $\xi$-projective resolution}  is a complete $\mathcal{P}(\xi)$-exact complex\\ $$\xymatrix@C=2em{\mathbf{P}:\cdots\ar[r]&P_1\ar[r]^{d_1}&P_0\ar[r]^{d_0}&P_{-1}\ar[r]&\cdots}$$ in $\mathcal{C}$ such that $P_n$ is $\xi$-projective for each integer $n$. Dually,   a  {\it complete $\xi$-injective coresolution}  is a complete $\mathcal{I}(\xi)$-exact complex $$\xymatrix@C=2em{\mathbf{I}:\cdots\ar[r]&I_1\ar[r]^{d_1}&I_0\ar[r]^{d_0}&I_{-1}\ar[r]&\cdots}$$ in $\mathcal{C}$ such that $I_n$ is $\xi$-injective for each integer $n$.}
\end{definition}

\begin{definition} \emph{(see \cite[Definition 4.8]{HZZ})}
{\rm  Let $\mathbf{P}$ be a complete $\xi$-projective resolution in $\mathcal{C}$. So for each integer $n$, there exists a $\mathcal{C}(-, \mathcal{P}(\xi))$-exact $\mathbb{E}$-triangle $\xymatrix@C=2em{K_{n+1}\ar[r]^{g_n}&P_n\ar[r]^{f_n}&K_n\ar@{-->}[r]^{\delta_n}&}$ in $\xi$. The objects $K_n$ are called {\it $\xi$-$\mathcal{G}$projective} for each integer $n$. Dually if  $\mathbf{I}$ is a complete $\xi$-injective  coresolution in $\mathcal{C}$, there exists a  $\mathcal{C}(\mathcal{I}(\xi), -)$-exact $\mathbb{E}$-triangle $\xymatrix@C=2em{K_{n+1}\ar[r]^{g_n}&I_n\ar[r]^{f_n}&K_n\ar@{-->}[r]^{\delta_n}&}$ in $\xi$ for each integer $n$. The objects $K_n$ are called {\it $\xi$-$\mathcal{G}$injective} for each integer $n$.}
\end{definition}

%Similar to the way of defining $\xi$-projective and $\xi$-injective dimensions, for an object $A\in{\mathcal{C}}$, the $\xi$-$\mathcal{G}$projective dimension $\xi$-${\rm \mathcal{G}pd} A$ and $\xi$-$\mathcal{G}$injective dimension $\xi$-${\rm \mathcal{G}id} A$ are defined inductively in \cite{HZZ}.

 We denote by $\mathcal{GP}(\xi)$ (resp., $\mathcal{GI}(\xi)$) the class of $\xi$-$\mathcal{G}$projective (resp., $\xi$-$\mathcal{G}$injective) objects.
It is obvious that $\mathcal{P(\xi)}$ $\subseteq$ $\mathcal{GP}(\xi)$ and $\mathcal{I(\xi)}$ $\subseteq$ $\mathcal{GI}(\xi)$.

\section{\bf $\xi$-complete cohomology}

In this section, we study $\xi$-complete cohomology groups of objects in an extriangulated category $\mathcal{C}$. At first, we need the following definition.

\begin{definition} \emph{(see \cite[Definition 3.1]{HZZ1})}\label{df:resolution} {\rm Let $M$ be an object in $\mathcal{C}$. A {\it $\xi$-projective resolution} of $M$ is a $\xi$-exact complex $\mathbf{P}\rightarrow M$ such that $\mathbf{P}_n\in{\mathcal{P}(\xi)}$ for all $n\geq0$. Dually, a {\it $\xi$-injective coresolution} of $M$ is a $\xi$-exact complex $ M\rightarrow \mathbf{I}$ such that $\mathbf{I}_n\in{\mathcal{I}(\xi)}$ for all $n\leq0$.}
\end{definition}

Using standard arguments
from relative homological algebra, one can prove the following version of the comparison
theorem for $\xi$-projective resolutions (resp., $\xi$-injective coresolutions).

\begin{lem}\label{lemma:comparasion-lemma} Let $M$ and $N$ be objects in $\mathcal{C}$.
\begin{enumerate}
\item
If $f_{M}:\xymatrix@C=2em{\mathbf{P}_M\ar[r]& M}$ and $f_{N}:\xymatrix@C=2em{\mathbf{P}_N\ar[r]& N}$ are $\xi$-projective resolutions of $M$ and $N$, respectively, then for each morphism $\mu:M\rightarrow N$, there exists a morphism $\widetilde{\mu}$, unique up to homotopy, making the following diagram
$$\xymatrix{\mathbf{P}_M\ar[r]^{f_{M}}\ar[d]_{\widetilde{\mu}}&M\ar[d]^{\mu}\\
\mathbf{P}_N\ar[r]^{f_{N}}&N}$$ \noindent
commute. If $\mu$ is an isomorphism, then $\widetilde {\mu}$ is a homotopy equivalence.

\item If $g_{M}:\xymatrix@C=2em{M\ar[r]& \mathbf{I}_{M}}$ and $g_{N}:\xymatrix@C=2em{N\ar[r]& \mathbf{I}_N}$ are $\xi$-injective coresolutions of $M$ and $N$, respectively, then for each morphism $\nu:M\rightarrow N$, there exists a morphism $\widetilde{\nu}$, unique up to homotopy, making the following diagram
$$\xymatrix{M\ar[r]^{g_{M}}\ar[d]_{\nu}&\mathbf{I}_{M}\ar[d]^{\widetilde{\nu}}\\
N\ar[r]^{g_{N}}&\mathbf{I}_{N}}$$ \noindent
commute. If $\nu$ is an isomorphism, then $\widetilde {\nu}$ is a  homotopy equivalence.
\end{enumerate}
\end{lem}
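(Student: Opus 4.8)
The plan is to adapt the classical inductive construction of relative homological algebra, replacing ``lifting along epimorphisms'' by ``lifting along deflations'' (which is precisely the surjectivity built into the definition of $\xi$-projectivity) and replacing ``a map killed by the next differential factors through the previous image'' by exactness in the middle term of the long exact sequence that $\mathcal{C}(X,-)$ produces out of an $\mathbb{E}$-triangle. Statement (2) will then follow by the evident dualization, so I would prove (1) in full and remark that (2) is obtained by passing to the opposite extriangulated category, in which $\xi$-injectives become $\xi$-projectives and inflations become deflations.

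First I would fix notation by writing out the $\xi$-exactness of $\mathbf{P}_M\to M$: there are $\mathbb{E}$-triangles $\xymatrix@C=1.5em{K^M_{n+1}\ar[r]^{g^M_n}&P^M_n\ar[r]^{f^M_n}&K^M_n\ar@{-->}[r]&}$ in $\xi$ with $K^M_0=M$, $f^M_0=f_M$, and $d^M_n=g^M_{n-1}f^M_n$, and similarly for $N$. I would construct simultaneously morphisms $\widetilde{\mu}_n\colon P^M_n\to P^N_n$ and syzygy morphisms $\kappa_n\colon K^M_n\to K^N_n$ with $\kappa_0=\mu$, subject to $f^N_n\widetilde{\mu}_n=\kappa_n f^M_n$ and $\widetilde{\mu}_n g^M_n=g^N_n\kappa_{n+1}$; a one-line check shows these two relations force $\widetilde{\mu}$ to commute with the differentials. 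For the inductive step, given $\kappa_n$, the composite $\kappa_n f^M_n\colon P^M_n\to K^N_n$ lifts along the deflation $f^N_n$ because $P^M_n$ is $\xi$-projective and the triangle $\xymatrix@C=1.2em{K^N_{n+1}\ar[r]&P^N_n\ar[r]^{f^N_n}&K^N_n\ar@{-->}[r]&}$ lies in $\xi$; this yields $\widetilde{\mu}_n$. Then $f^N_n(\widetilde{\mu}_n g^M_n)=\kappa_n(f^M_n g^M_n)=0$, since consecutive maps in an $\mathbb{E}$-triangle compose to zero, so by exactness of $\mathcal{C}(K^M_{n+1},-)$ at the middle term of the $N$-triangle the map $\widetilde{\mu}_n g^M_n$ factors through $g^N_n$, producing $\kappa_{n+1}$ and completing the recursion.

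For uniqueness up to homotopy I would take the difference $\varphi=\widetilde{\mu}-\widetilde{\mu}'$ of two lifts of $\mu$ and build a contracting homotopy $s_n\colon P^M_n\to P^N_{n+1}$ with $\varphi_n=d^N_{n+1}s_n+s_{n-1}d^M_n$ by the same two moves: at each stage the relevant map is annihilated by the appropriate $f^N$, hence factors through the inflation $g^N$ by the long exact sequence, and the resulting map into a syzygy is then lifted along the next deflation using $\xi$-projectivity of its source. Finally, if $\mu$ is an isomorphism I would apply the existence part to $\mu^{-1}$ to obtain a lift $\widetilde{\mu^{-1}}$; since $\widetilde{\mu^{-1}}\widetilde{\mu}$ and $\mathrm{id}_{\mathbf{P}_M}$ both lift $\mathrm{id}_M$, uniqueness gives $\widetilde{\mu^{-1}}\widetilde{\mu}\simeq\mathrm{id}$, and symmetrically on the other side, so $\widetilde{\mu}$ is a homotopy equivalence.

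The main obstacle is purely at the level of replacing the elementary kernel/image reasoning of the abelian setting with the correct extriangulated substitute: one cannot speak of subobjects, so every factorization must be justified through exactness in the middle of the cohomological long exact sequence attached to an $\mathbb{E}$-triangle (the relevant fact being $\im(g_{n*})=\ker(f_{n*})$ for $\mathcal{C}(X,-)$), and one must resist assuming that inflations are monic or deflations epic. Once this bookkeeping is in place, the two recursions run exactly as in the module case, and I would expect no difficulty arising from $\xi$ beyond checking that every triangle invoked genuinely lies in $\xi$ (which is guaranteed because the resolutions are $\xi$-exact), so that $\xi$-projectivity applies at each step.
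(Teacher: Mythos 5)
Your plan is the right one, and it coincides with what the paper has in mind: the paper gives no written proof, appealing only to ``standard arguments from relative homological algebra'', and your scheme --- inductive lifting along $\xi$-deflations via $\xi$-projectivity, factorization through inflations via exactness of $\mathcal{C}(X,-)$ at the middle term of an $\mathbb{E}$-triangle, and part (2) by passage to the opposite extriangulated category --- is exactly that standard argument. Your existence recursion is complete as stated: $\xi$-projectivity of $P^M_n$ lifts $\kappa_n f^M_n$ through the $\xi$-deflation $f^N_n$, the relation $f^N_n(\widetilde{\mu}_n g^M_n)=\kappa_n f^M_n g^M_n=0$ holds because $f^M_n g^M_n=0$ for any $\mathbb{E}$-triangle, and middle exactness of $\mathcal{C}(K^M_{n+1},-)$ produces $\kappa_{n+1}$; the two relations $f^N_n\widetilde{\mu}_n=\kappa_n f^M_n$ and $\widetilde{\mu}_n g^M_n=g^N_n\kappa_{n+1}$ do force commutation with the differentials, since $d_n=g_{n-1}f_n$ on both sides.

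There is, however, one step in your homotopy recursion that your cited tools do not yet justify, and it is precisely where the extriangulated setting bites. With $s_{n-1}$ constructed and $\psi_n:=\varphi_n-s_{n-1}d^M_n$, the natural computation (using the inductive hypothesis $\varphi_{n-1}=d^N_ns_{n-1}+s_{n-2}d^M_{n-1}$) yields only $d^N_n\psi_n=g^N_{n-1}\bigl(f^N_n\psi_n\bigr)=0$, \emph{not} the assertion in your sketch that $\psi_n$ is ``annihilated by the appropriate $f^N$''; since inflations need not be monic (as you yourself warn), you cannot cancel $g^N_{n-1}$, and middle exactness $\im(g_{n*})=\ker(f_{n*})$ is not yet applicable. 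The rescue is that $\psi_n$ has $\xi$-projective source: the definition of $\xi$-projectivity gives the \emph{short} exact sequence $0\to\mathcal{C}(P^M_n,K^N_n)\to\mathcal{C}(P^M_n,P^N_{n-1})\to\mathcal{C}(P^M_n,K^N_{n-1})\to0$ for the $\mathbb{E}$-triangle $K^N_n\to P^N_{n-1}\to K^N_{n-1}$ in $\xi$, and it is the left-hand injectivity of $\mathcal{C}(P^M_n,g^N_{n-1})$ --- strictly more than the middle exactness you invoke --- that lets you conclude $f^N_n\psi_n=0$. Only then does middle exactness give $\psi_n=g^N_n\tau_n$, after which $\xi$-projectivity lifts $\tau_n=f^N_{n+1}s_n$ as you describe (the base case $n=0$ is fine, since $f_N\varphi_0=\mu f_M-\mu f_M=0$ directly). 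With this injectivity step made explicit, your argument is complete, and the remaining assertions (uniqueness applied to $\mu^{-1}$ and to $\mathrm{id}_M$ giving the homotopy equivalence, and dualization for (2)) go through exactly as you say.
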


We denote by $\textrm{Ch}(\mathcal{C})$ the category of complexes in $\mathcal{C}$; the objects are complexes and morphisms are chain maps. We write the complexes homologically, so an object $\mathbf{X}$ of $\textrm{Ch}(\mathcal{C})$ is of the form
$$\xymatrix@C=2em{\mathbf{X}:=\cdots \ar[r]&X_{n+1}\ar[r]^{d_{n+1}^{\mathbf{X}}}&X_n\ar[r]^{d_n^{\mathbf{X}}}&X_{n-1}\ar[r]&\cdots}.$$
Assume that $\mathbf{X}$ and $\mathbf{Y}$ are complexes in $\textrm{Ch}(\mathcal{C})$.
A homomorphism $\xymatrix@C=2em{\varphi:\mathbf{X}\ar[r]&\mathbf{Y}}$ of degree $n$ is a family $(\varphi_i)_{i\in\mathbb{Z}}$ of morphisms $\xymatrix@C=2em{\varphi_i:X_i\ar[r]& Y_{i+n}}$ for all $i\in\mathbb{Z}$. In this case, we set $|\varphi|=n$. All such homomorphisms form an abelian group, denoted by $\mathcal{C}(\mathbf{X},\mathbf{Y})_n$, which is identified with $\prod_{i\in \mathbb{Z}}{\rm \mathcal{C}}(X_i,Y_{i+n})$. We let $\mathcal{C}(\mathbf{X},\mathbf{Y})$ be the complex of abelian groups with $n$th component $\mathcal{C}(\mathbf{X},\mathbf{Y})_n$ and differential $d(\varphi_i)=d_{i+n}^{\mathbf{Y}}\varphi_i-(-1)^n\varphi_{i-1}d_i^{\mathbf{X}}$ for $\varphi=(\varphi_i)\in\mathcal{C}(\mathbf{X},\mathbf{Y})_n$.
We refer to \cite{AFH} for more details.

\begin{rem} \label{3.3} Let $M$ and $N$ be objects in $\mathcal{C}$.
\begin{enumerate}
\item Note that there are two $\xi$-projective resolutions $\xymatrix@C=2em{\mathbf{P}_M\ar[r]& M}$ and $\xymatrix@C=2em{\mathbf{P}_N\ar[r]& N}$ of $M$ and $N$, respectively. A homomorphism $\beta\in \mathcal{C}(\mathbf{P}_M,\mathbf{P}_N)$ is {\it bounded above} if $\beta_i=0$ for all $i\gg 0$. The subset $\overline{\mathcal{C}}(\mathbf{P}_M,\mathbf{P}_N)$, consisting of all bounded above homomorphisms, is a subcomplex with components
\begin{center}$\overline{\mathcal{C}}(\mathbf{P}_M,\mathbf{P}_N)_n=\{(\varphi_i)\in \mathcal{C}(\mathbf{P}_M,\mathbf{P}_N)_n \ | \ \varphi_i=0$ for all $i\gg 0\}.$\end{center}
We set
$$\widetilde{\mathcal{C}}(\mathbf{P}_M,\mathbf{P}_N)={\mathcal{C}}(\mathbf{P}_M,\mathbf{P}_N)/\overline{\mathcal{C}}(\mathbf{P}_M,\mathbf{P}_N).$$

\item Note that there are two $\xi$-injective coresolutions $\xymatrix@C=2em{M\ar[r]&\mathbf{I}_M}$ and $\xymatrix{N\ar[r]&\mathbf{I}_N}$ of $M$ and $N$, respectively. A homomorphism $\beta\in {\mathcal{C}}(\mathbf{I}_M,\mathbf{I}_N)$ is {\it bounded below} if $\beta_i=0$ for all $i\ll 0$. The subset $\underline{\mathcal{C}}(\mathbf{I}_M,\mathbf{I}_N)$, consisting of all bounded below homomorphisms, is a subcomplex with components
\begin{center}$\underline{\mathcal{C}}(\mathbf{I}_M,\mathbf{I}_N)_n=\{(\varphi_i)\in {\mathcal{C}}(\mathbf{I}_M,\mathbf{I}_N)_n \ | \ \varphi_i=0$ for all $i\ll 0\}.$\end{center}
We set
$$\widetilde{\mathcal{C}}(\mathbf{I}_M,\mathbf{I}_N)={\mathcal{C}}(\mathbf{I}_M,\mathbf{I}_N)/\underline{\mathcal{C}}(\mathbf{I}_M,\mathbf{I}_N).$$
\end{enumerate}
\end{rem}

\begin{definition}\label{df:3.6} {\rm Let $M$ and $N$ be objects in $\mathcal{C}$, and let $n$ be an integer.
\begin{enumerate}
\item Using $\xi$-projective resolutions, we define the $n$th \emph{$\xi$-complete cohomology group}, denoted by $\widetilde{\rm \xi xt}_{\mathcal{P}}^n(M,N)$, as
$$\widetilde{\rm \xi xt}_{\mathcal{P}}^n(M,N)=H^n(\widetilde{\mathcal{C}}(\mathbf{P}_M,\mathbf{P}_N)),$$
where $\widetilde{\mathcal{C}}(\mathbf{P}_M,\mathbf{P}_N)$ is the complex defined in Remark \ref{3.3}(1).

\item Using $\xi$-injective coresolutions, we define the $n$th \emph{$\xi$-complete cohomology group}, denoted by $\widetilde{\rm \xi xt}_{\mathcal{I}}^n(M,N)$, as
$$\widetilde{\rm \xi xt}_{\mathcal{I}}^n(M,N)=H^n(\widetilde{\mathcal{C}}(\mathbf{I}_M,\mathbf{I}_N)),$$
where $\widetilde{\mathcal{C}}(\mathbf{I}_M,\mathbf{I}_N)$ is the complex defined in Remark \ref{3.3}(2).
\end{enumerate}}
\end{definition}

\begin{rem}\label{fact:2.5'} {By Lemma \ref{lemma:comparasion-lemma}, one can see that $\widetilde{\rm \xi xt}_{\mathcal{P}}^n(-,-)$ and $\widetilde{\rm \xi xt}_{\mathcal{I}}^n(-,-)$ are cohomological functors for any integer $n\in\mathbb{Z}$, independent of the choice of $\xi$-projective resolutions and $\xi$-injective coresolutions, respectively.}
\end{rem}

\begin{definition} \emph{(see \cite[Definition 3.2]{HZZ1})}\label{df:derived-functors} {\rm Let $M$ and $N$ be objects in $\mathcal{C}$.

\begin{enumerate}
\item[{\rm (1)}] If we choose a $\xi$-projective resolution $\xymatrix@C=2em{\mathbf{P}\ar[r]& M}$ of  $M$, then for any integer $n\geq 0$, the \emph{$\xi$-cohomology groups} $\xi{\rm xt}_{\mathcal{P}(\xi)}^n(M,N)$ are defined as
$$\xi{\rm xt}_{\mathcal{P}(\xi)}^n(M,N)=H^n({\mathcal{C}}(\mathbf{P},N)).$$

\item[{\rm (2)}] If we choose a
$\xi$-injective coresolution $\xymatrix@C=2em{N\ar[r]&\mathbf{I}}$ of  $N$, then for any integer $n\geq 0$, the \emph{$\xi$-cohomology groups} $\xi{\rm xt}_{\mathcal{I}(\xi)}^n(M,N)$ are defined as $$\xi{\rm xt}_{\mathcal{I}(\xi)}^n(M,N)=H^n({\mathcal{C}}(M, \mathbf{I})).$$
\end{enumerate}}
\end{definition}

\begin{rem}\label{fact:2.5'} { By Lemma \ref{lemma:comparasion-lemma}, one can see that ${\rm \xi xt}_{\mathcal{P}(\xi)}^n(-,-)$ and ${\rm \xi xt}_{\mathcal{I}(\xi)}^n(-,-)$ are cohomological functors for any integer $n\geq 0$, independent of the choice of $\xi$-projective resolutions and $\xi$-injective coresolutions, respectively. In fact, with the modifications of the usual proof, one obtains the isomorphism $\xi{\rm xt}_{\mathcal{P}(\xi)}^n(M,N)\cong \xi{\rm xt}_{\mathcal{I}(\xi)}^n(M,N),$
which is denoted by $\xi{\rm xt}_{\xi}^n(M,N).$
}
\end{rem}
\begin{lem} \label{lem:3.8}Let $M$ and $N$ be objects in $\mathcal{C}$.
\begin{enumerate}
\item If  $\xymatrix@C=2em{f:\mathbf{P}\ar[r]& M}$ and $\xymatrix@C=2em{g:\mathbf{Q}\ar[r]& N}$ are $\xi$-projective resolutions of $M$ and $N$, respectively, then we have $${\rm \xi xt}_{\xi}^n(M,N)=H^n({\mathcal{C}}(\mathbf{P},\mathbf{Q}))$$
for each integer $n\geq1$.

\item If  $\xymatrix@C=2em{f':M\ar[r]& \mathbf{I}}$ and $\xymatrix@C=2em{g':N\ar[r]& \mathbf{J}}$ are $\xi$-injective coresolutions of $M$ and $N$, respectively, then we have $${\rm \xi xt}_{\xi}^n(M,N)=H^n({\mathcal{C}}(\mathbf{I},\mathbf{J}))$$
for each integer $n\geq1$.
\end{enumerate}

\end{lem}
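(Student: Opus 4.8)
The plan is to prove part (1) in detail and to obtain part (2) by the dual argument. Fix $\xi$-projective resolutions $f\colon\mathbf{P}\to M$ and $g\colon\mathbf{Q}\to N$. By Definition \ref{df:derived-functors} and the identification that follows it, we have ${\rm \xi xt}_{\xi}^{n}(M,N)={\rm \xi xt}_{\mathcal{P}(\xi)}^{n}(M,N)=H^{n}(\mathcal{C}(\mathbf{P},N))$, where $N$ is regarded as the complex concentrated in degree $0$. Viewing $g$ as a chain map from $\mathbf{Q}$ to this stalk complex, it induces a morphism of complexes of abelian groups $g_{*}\colon\mathcal{C}(\mathbf{P},\mathbf{Q})\to\mathcal{C}(\mathbf{P},N)$, and the whole problem reduces to showing that $g_{*}$ induces an isomorphism on $H^{n}$ for every $n\geq 1$.

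The engine of the argument is that $\mathcal{C}(P,-)$ converts the resolution $\mathbf{Q}\to N$ into an honest resolution whenever $P$ is $\xi$-projective. Since $g$ is a $\xi$-projective resolution, the augmented complex $\mathbf{Q}^{+}\colon\cdots\to Q_{1}\to Q_{0}\to N\to 0$ is $\xi$-exact, hence assembled through its syzygies from $\mathbb{E}$-triangles lying in $\xi$. For each $\xi$-projective object $P_{p}$, applying $\mathcal{C}(P_{p},-)$ to each of these $\mathbb{E}$-triangles yields a short exact sequence of abelian groups, by the very definition of $\xi$-projectivity; splicing these short exact sequences shows that $\mathcal{C}(P_{p},\mathbf{Q}^{+})$ is exact. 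Equivalently, $\mathcal{C}(P_{p},\mathbf{Q})$ is a resolution of $\mathcal{C}(P_{p},N)$, with cohomology concentrated in the augmentation degree.

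Next I would assemble these fibrewise resolutions. Regard $\mathcal{C}(\mathbf{P},\mathbf{Q})$ as the total complex of the double complex with $(p,q)$-entry $\mathcal{C}(P_{p},Q_{q})$, and run the spectral sequence obtained by first taking cohomology in the $\mathbf{Q}$-direction. By the previous paragraph the first page is concentrated in the single augmentation row, with entries $\mathcal{C}(P_{p},N)$, and its induced differential is exactly that of the complex $\mathcal{C}(\mathbf{P},N)$; hence the sequence collapses with $E_{2}$-row $H^{p}(\mathcal{C}(\mathbf{P},N))={\rm \xi xt}_{\xi}^{p}(M,N)$. As it converges to $H^{*}(\mathcal{C}(\mathbf{P},\mathbf{Q}))$, this yields $H^{n}(\mathcal{C}(\mathbf{P},\mathbf{Q}))\cong{\rm \xi xt}_{\xi}^{n}(M,N)$ in the asserted range $n\geq 1$. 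Equivalently, one may phrase this as the acyclicity of the mapping cone of $g_{*}$, which is canonically $\mathcal{C}(\mathbf{P},\mathrm{Cone}(g))$ with $\mathrm{Cone}(g)$ the $\xi$-exact complex $\mathbf{Q}^{+}$, the acyclicity again being fed by the rowwise exactness just established.

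I expect the main obstacle to lie in this last assembly step rather than in the fibrewise exactness. Because $\mathcal{C}(\mathbf{X},\mathbf{Y})_{n}=\prod_{i}\mathcal{C}(X_{i},Y_{i+n})$ is a genuine infinite product in each total degree, rowwise exactness of a double complex does not formally force acyclicity of its product total complex; one must exploit that both $\mathbf{P}$ and $\mathrm{Cone}(g)$ are concentrated in nonnegative degrees (a half-plane hypothesis) to guarantee convergence, for instance via the acyclic assembly lemma. This boundedness is also what makes the comparison cleanest in degrees $n\geq 1$, where the degree-zero augmentation terms no longer interfere. Finally, part (2) follows by dualising the entire argument: one replaces the augmentation $g\colon\mathbf{Q}\to N$ by the co-augmentation $M\to\mathbf{I}$ of a $\xi$-injective coresolution of $M$, uses that each $\xi$-injective $J_{q}$ makes $\mathcal{C}(-,J_{q})$ turn the $\xi$-exact co-augmented complex $\mathbf{I}^{+}$ into an exact sequence, and assembles exactly as before to identify $H^{n}(\mathcal{C}(\mathbf{I},\mathbf{J}))$ with ${\rm \xi xt}_{\xi}^{n}(M,N)$ for all $n\geq 1$.
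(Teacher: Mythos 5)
Your proof is correct and follows essentially the same route as the paper's: both reduce the claim to showing that $g_*\colon\mathcal{C}(\mathbf{P},\mathbf{Q})\to\mathcal{C}(\mathbf{P},N)$ is a quasi-isomorphism via the acyclicity of $\mathcal{C}(\mathbf{P},\mathrm{Cone}(g))\cong\mathrm{Cone}(\mathcal{C}(\mathbf{P},g))$, fed by the fact that $\mathcal{C}(P,-)$ turns the $\xi$-exact augmented complex into an exact sequence for each $\xi$-projective $P$. The only difference is that you make the assembly step explicit (spectral sequence collapse, respectively the acyclic assembly lemma for the product total complex, correctly flagging the half-plane hypothesis needed there), whereas the paper delegates exactly this point to the citation of \cite[Lemma 2.4]{cfh}.
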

\begin{proof} We only prove (1) and the proof of (2) is similar. Since $\xymatrix@C=2em{g:\mathbf{Q}\ar[r]& N}$ is a $\xi$-projective resolution $N$, we have that the mapping cone $$\textrm{Cone}(g):= \cdots\to Q_1\to Q_0\to N\to 0$$ is a $\xi$-exact complex. Note that
$\mathbf{P}:= \cdots \ra P_1\ra P_0\ra 0$ is a complex such that $P_i$ is $\xi$-projective for any $i\geq0$. Then $\mathcal{C}(\mathbf{P},{\rm Cone}(g))$ is an exact complex and the proof is similar to \cite[Lemma 2.4]{cfh}. Since ${\mathcal{C}}(\mathbf{P},{\rm Cone}(g))\cong {\rm Cone}({\mathcal{C}}(\mathbf{P},g))$, we get that ${\rm Cone}(\mathcal{C}(\mathbf{P},g))$ is an exact complex. So ${\rm \xi xt}_{\xi}^n(M,N)=H^n({\mathcal{C}}(\mathbf{P},N))\cong H^n({\mathcal{C}}(\mathbf{P},\mathbf{Q}))$ for each integer $n\geq1$. This completes the proof.
\end{proof}

Recall from \cite[Definition 4.1]{HZZ1} that a full subcategory $\mathcal{X}\subseteq \mathcal{C}$ is called a {\it generating subcategory} of $\mathcal{C}$ if for all $M\in{\mathcal{C}}$, $\mathcal{C}(\mathcal{X},M)=0$ implies that $M=0$. Dually, a full subcategory $\mathcal{Y}\subseteq\mathcal{C}$ is called a \emph{cogenerating subcategory} of $\mathcal{C}$ if for
all $N\in{\mathcal{C}}$, $\mathcal{C}(N,\mathcal{Y})=0$ implies that $N=0$.

\begin{lem}\label{lem:3.9}  Let $M$ be an object in $\mathcal{C}$ and $n$ a non-negative integer.
\begin{enumerate}
\item If $\mathcal{P}(\xi)$ is a generating subcategory of $\mathcal{C}$, then $\xi$-${\rm pd}M\leq n$ if and only if ${\rm \xi}xt_{\xi}^{n+1}(M,N)=0$ for any object $N$ in $\mathcal{C}$.

\item If $\mathcal{I}(\xi)$ is a cogenerating subcategory of $\mathcal{C}$, then $\xi$-${\rm id}M\leq n$ if and only if ${\rm \xi}xt_{\xi}^{n+1}(N,M)=0$ for any object $N$ in $\mathcal{C}$.
\end{enumerate}
\end{lem}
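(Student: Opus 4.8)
The plan is to prove the biconditional in (1) in two halves — the converse carrying the real content — and then to deduce (2) by a dual argument, replacing $\mathcal{P}(\xi)$ and the generating hypothesis by $\mathcal{I}(\xi)$ and the cogenerating hypothesis.

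For the forward implication, suppose $\xi$-${\rm pd}\,M\le n$. Unravelling the inductive definition of $\xi$-projective dimension produces a $\xi$-projective resolution $\mathbf{P}\to M$ (Definition \ref{df:resolution}) whose $n$th syzygy is already $\xi$-projective, so that we may arrange $P_i=0$ for $i>n$. Then $\mathcal{C}(\mathbf{P},N)$ is concentrated in degrees $0,\dots,n$, whence $\xi{\rm xt}_\xi^{n+1}(M,N)=H^{n+1}(\mathcal{C}(\mathbf{P},N))=0$ for every object $N$; this half uses neither Lemma \ref{lem:3.8} nor the generating hypothesis.

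For the converse, assume $\xi{\rm xt}_\xi^{n+1}(M,N)=0$ for all $N$ and fix any $\xi$-projective resolution $\mathbf{P}\to M$, with defining $\mathbb{E}$-triangles $K_{i+1}\xrightarrow{g_i}P_i\xrightarrow{f_i}K_i\dashrightarrow$ in $\xi$ and $K_0=M$. First I would perform dimension shifting: deleting the first $n$ stages exhibits $\cdots\to P_{n+1}\to P_n\to K_n\to 0$ as a $\xi$-projective resolution of the $n$th syzygy $K_n$, and reindexing cohomology gives a natural isomorphism $\xi{\rm xt}_\xi^{1}(K_n,N)\cong\xi{\rm xt}_\xi^{n+1}(M,N)$; hence $\xi{\rm xt}_\xi^{1}(K_n,N)=0$ for all $N$. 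It then suffices to split the $\mathbb{E}$-triangle $K_{n+1}\xrightarrow{g_n}P_n\xrightarrow{f_n}K_n\dashrightarrow$, for then $K_n$ is a direct summand of $P_n$ and hence $\xi$-projective (as $\mathcal{P}(\xi)$ is closed under summands), giving $\xi$-${\rm pd}\,M\le n$. To split it I would feed it into the long exact sequence of the cohomological functor $\xi{\rm xt}_\xi(-,K_{n+1})$ (Remark \ref{fact:2.5'}): the segment $\xi{\rm xt}_\xi^{0}(P_n,K_{n+1})\xrightarrow{g_n^{*}}\xi{\rm xt}_\xi^{0}(K_{n+1},K_{n+1})\to\xi{\rm xt}_\xi^{1}(K_n,K_{n+1})=0$ shows that $g_n^{*}$ is surjective, so lifting $\mathrm{id}_{K_{n+1}}$ yields $h\colon P_n\to K_{n+1}$ with $hg_n=\mathrm{id}_{K_{n+1}}$; thus $g_n$ is a section and the triangle splits.

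The step I expect to be the main obstacle is the degree-zero input just used, namely the identification $\xi{\rm xt}_\xi^{0}(X,N)\cong\mathcal{C}(X,N)$ that lets me regard $\mathrm{id}_{K_{n+1}}$ as an element of $\xi{\rm xt}_\xi^{0}(K_{n+1},K_{n+1})$ and read its lift as an honest morphism splitting $g_n$. In an extriangulated category a deflation is only a weak cokernel, so $\mathcal{C}(-,N)$ need not be left exact on $\mathbb{E}$-triangles and this comparison can genuinely fail; it is precisely here that the hypothesis that $\mathcal{P}(\xi)$ is a generating subcategory is indispensable, forcing the canonical map $\mathcal{C}(X,N)\to\xi{\rm xt}_\xi^{0}(X,N)$ induced by the augmentation to be an isomorphism. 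Part (2) is entirely dual: one works with a $\xi$-injective coresolution of $M$, shifts to the $n$th cosyzygy, uses the long exact sequence of $\xi{\rm xt}_\xi(N,-)$, and invokes the cogenerating hypothesis on $\mathcal{I}(\xi)$ to identify $\xi{\rm xt}_\xi^{0}(N,-)$ with $\mathcal{C}(N,-)$.
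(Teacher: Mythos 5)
Your overall skeleton (trivial forward direction; dimension shifting to get $\xi{\rm xt}_{\xi}^{1}(K_n,N)=0$ for all $N$; splitting the syzygy $\mathbb{E}$-triangle to force $K_n\in\mathcal{P}(\xi)$) is exactly the strategy of the proof the paper defers to, namely \cite[Proposition 4.17]{Bel1}. But the step you yourself flag as the crux is wrong: the generating hypothesis does \emph{not} make the canonical map $\mathcal{C}(X,N)\to\xi{\rm xt}_{\xi}^{0}(X,N)$ an isomorphism, because it does not force $\xi$-deflations to be epimorphisms. A concrete counterexample lives in the very setting of Corollary \ref{cor1}: let $\mathcal{C}$ be a compactly generated triangulated category and $\xi$ the proper class of pure triangles, so $\mathcal{P}(\xi)$ (the pure-projectives) is generating. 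For any non-split triangle $K_1\to P_0\xrightarrow{\epsilon}X\xrightarrow{\partial}\Sigma K_1$ in $\xi$ one has $\partial\epsilon=0$ while $\partial\neq 0$ (it is a nonzero phantom map), so $\mathcal{C}(X,\Sigma K_1)\to\xi{\rm xt}_{\xi}^{0}(X,\Sigma K_1)$ is not injective; dually, surjectivity fails for the same reason. Consequently your lift $h$ of $\mathrm{id}_{K_{n+1}}$ only satisfies $(hg_n-\mathrm{id}_{K_{n+1}})\epsilon=0$ for the augmentation $\epsilon$ of the chosen resolution of $K_{n+1}$, which does not yield $hg_n=\mathrm{id}_{K_{n+1}}$, and the splitting of the syzygy triangle does not follow as stated. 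Note that in an exact category deflations \emph{are} epic, so your mechanism silently assumes away precisely the triangulated examples the lemma must cover.

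The argument can be repaired by staying at the cochain level instead of passing through $\xi{\rm xt}_{\xi}^{0}$, which is also how this paper itself deploys the generating hypothesis in the proof of Corollary \ref{cor:3.3}. Taking $N=K_{n+1}$, the vanishing $\xi{\rm xt}_{\xi}^{n+1}(M,K_{n+1})=H^{n+1}(\mathcal{C}(\mathbf{P},K_{n+1}))=0$ applies to the cocycle $f_{n+1}\in\mathcal{C}(P_{n+1},K_{n+1})$ (indeed $f_{n+1}d_{n+2}=f_{n+1}g_{n+1}f_{n+2}=0$), producing $h\colon P_n\to K_{n+1}$ with $f_{n+1}=h\,d_{n+1}=hg_nf_{n+1}$. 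Since every morphism $P\to K_{n+1}$ with $P\in\mathcal{P}(\xi)$ lifts through the $\xi$-deflation $f_{n+1}$, this gives $\mathcal{C}(P,hg_n)=\mathrm{id}_{\mathcal{C}(P,K_{n+1})}$ for all $P\in\mathcal{P}(\xi)$, so applying $\mathcal{C}(P,-)$ to the $\mathbb{E}$-triangle $K_{n+1}\xrightarrow{g_n}P_n\xrightarrow{f_n}K_n$ yields a \emph{split} short exact sequence, i.e.\ $\mathcal{C}(P,P_n)\cong\mathcal{C}(P,K_{n+1})\oplus\mathcal{C}(P,K_n)$ naturally in $P$. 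Only now does the generating hypothesis enter, and for a different purpose than you assign it: it upgrades this isomorphism of restricted Hom functors to $P_n\cong K_{n+1}\oplus K_n$ (the same Yoneda-type step used in the proof of Corollary \ref{cor:3.3}, justified via \cite{HZZ1}), whence $K_n\in\mathcal{P}(\xi)$ because $\mathcal{P}(\xi)$ is closed under direct summands, and $\xi$-${\rm pd}\,M\leq n$. Your forward implication and the dimension-shifting isomorphism $\xi{\rm xt}_{\xi}^{n+1}(M,N)\cong\xi{\rm xt}_{\xi}^{1}(K_n,N)$ are correct as written, and part (2) dualizes once the same correction is made.
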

\begin{proof} The proofs are similar to \cite[Proposition 4.17]{Bel1}.
\end{proof}

We are now in a position to prove the main result of this section.

\begin{thm}\label{thm1} Let $M$ be an object in $\mathcal{C}$.
\begin{enumerate}

\item If $\mathcal{P}(\xi)$ is a generating subcategory of $\mathcal{C}$, then the following are equivalent:
\begin{enumerate}
\item[(a)] $\xi$-${\rm pd}M<\infty$;

\item[(b)] $\widetilde{{\rm \xi xt}}_{\mathcal{P}}^i(M,N)=0$ for any integer $i\in\mathbb{Z}$ and any object $N$ in $\mathcal{C}$;

\item[(c)] $\widetilde{{\rm \xi xt}}_{\mathcal{P}}^i(N,M)=0$ for any integer $i\in\mathbb{Z}$ and any object $N$ in $\mathcal{C}$;

\item[(d)] $\widetilde{{\rm \xi xt}}_{\mathcal{P}}^0(M,M)=0$.

\end{enumerate}

\item If $\mathcal{I}(\xi)$ is a cogenerating subcategory of $\mathcal{C}$, then the following are equivalent:
\begin{enumerate}
\item[(a)] $\xi$-${\rm id}M<\infty$;

\item[(b)] $\widetilde{{\rm \xi xt}}_{\mathcal{I}}^i(N,M)=0$ for all integer $i\in\mathbb{Z}$ and any object $N$ in $\mathcal{C}$;

\item[(c)] $\widetilde{{\rm \xi xt}}_{\mathcal{I}}^i(M,N)=0$ for all integer $i\in\mathbb{Z}$ and any object $N$ in $\mathcal{C}$;

\item[(d)] $\widetilde{{\rm \xi xt}}_{\mathcal{I}}^0(M,M)=0$.
\end{enumerate}

\end{enumerate}
\end{thm}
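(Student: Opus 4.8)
The plan is to prove (1) by running the cycle of implications (a)$\Rightarrow$(b), (a)$\Rightarrow$(c), (b)$\Rightarrow$(d), (c)$\Rightarrow$(d) and (d)$\Rightarrow$(a); part (2) is entirely dual, replacing $\xi$-projective resolutions by $\xi$-injective coresolutions, ``bounded above'' by ``bounded below,'' and Lemma \ref{lem:3.9}(1) by Lemma \ref{lem:3.9}(2). By Lemma \ref{lemma:comparasion-lemma} each group $\widetilde{{\rm \xi xt}}_{\mathcal{P}}^i$ is independent of the chosen resolutions, so I am free to pick convenient ones throughout.

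For (a)$\Rightarrow$(b) and (a)$\Rightarrow$(c), suppose $\xi$-${\rm pd}\,M=d<\infty$. Then $M$ admits a $\xi$-projective resolution $\mathbf{P}_M$ with $(\mathbf{P}_M)_i=0$ for $i>d$. For any $N$ with $\xi$-projective resolution $\mathbf{P}_N$, a degree-$n$ homomorphism $\varphi\colon\mathbf{P}_M\to\mathbf{P}_N$ has components $\varphi_i\colon(\mathbf{P}_M)_i\to(\mathbf{P}_N)_{i+n}$, which vanish for $i>d$; hence every such $\varphi$ is bounded above, so $\overline{\mathcal{C}}(\mathbf{P}_M,\mathbf{P}_N)=\mathcal{C}(\mathbf{P}_M,\mathbf{P}_N)$ and the quotient complex $\widetilde{\mathcal{C}}(\mathbf{P}_M,\mathbf{P}_N)$ is zero, giving (b). The same reasoning applied to $\mathcal{C}(\mathbf{P}_N,\mathbf{P}_M)$ works because a degree-$n$ homomorphism now has components landing in $(\mathbf{P}_M)_{i+n}$, which vanish once $i>d-n$, so again every homomorphism is bounded above; this gives (c). Finally (b)$\Rightarrow$(d) and (c)$\Rightarrow$(d) are immediate on specializing to $N=M$ and $i=0$.

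The substantial implication is (d)$\Rightarrow$(a). Starting from $\widetilde{{\rm \xi xt}}_{\mathcal{P}}^0(M,M)=0$, the class of the identity chain map $1_{\mathbf{P}_M}$ (a degree-zero cycle) is a boundary in $\widetilde{\mathcal{C}}(\mathbf{P}_M,\mathbf{P}_M)$. Lifting a bounding element to a degree-one homomorphism $\sigma=(\sigma_i)$ of $\mathbf{P}_M$, the condition that $1_{\mathbf{P}_M}-d(\sigma)$ lie in $\overline{\mathcal{C}}(\mathbf{P}_M,\mathbf{P}_M)$ produces an integer $N_0$ with $1_{P_i}=d_{i+1}\sigma_i+\sigma_{i-1}d_i$ for all $i>N_0$. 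This is a contracting homotopy in high degrees: for any object $N$ the maps $h_i\colon\mathcal{C}(P_i,N)\to\mathcal{C}(P_{i-1},N)$, $h_i(\varphi)=\varphi\sigma_{i-1}$, satisfy $d_i^{\ast}h_i(\varphi)+h_{i+1}d_{i+1}^{\ast}(\varphi)=\varphi(d_{i+1}\sigma_i+\sigma_{i-1}d_i)=\varphi$ for all $i>N_0$, so every cocycle in $\mathcal{C}(P_i,N)$ with $i>N_0$ is a coboundary. Hence ${\rm \xi xt}_{\xi}^{n}(M,N)=H^{n}(\mathcal{C}(\mathbf{P}_M,N))=0$ for every $n>N_0$ and every $N$, and Lemma \ref{lem:3.9}(1)---which is exactly where the hypothesis that $\mathcal{P}(\xi)$ is generating enters---yields $\xi$-${\rm pd}\,M\le N_0<\infty$.

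I expect the extraction of this high-degree contracting homotopy to be the delicate point: one must track the quotient $\widetilde{\mathcal{C}}=\mathcal{C}/\overline{\mathcal{C}}$ and the sign in the differential of $\mathcal{C}(\mathbf{P}_M,\mathbf{P}_M)$ carefully enough to land on $1_{P_i}=d_{i+1}\sigma_i+\sigma_{i-1}d_i$, and to note that a single $\sigma$ (depending only on $\mathbf{P}_M$) then serves uniformly for all test objects $N$. Once this relation is in hand the rest is formal, the homotopy descending to $\mathcal{C}(\mathbf{P}_M,N)$ and Lemma \ref{lem:3.9} converting eventual vanishing of the relative cohomology into a dimension bound. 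One could instead read the relation at level $i=N_0+1$, where $d_i=g_{i-1}f_i$ factors through the syzygies $K_i$, as an orthogonal idempotent decomposition $1_{P_{N_0+1}}=\sigma_{N_0}g_{N_0}f_{N_0+1}+g_{N_0+1}f_{N_0+2}\sigma_{N_0+1}$ of a $\xi$-projective object, aiming to exhibit $K_{N_0+1}$ as a direct summand of $P_{N_0+1}$; but making that splitting rigorous is more awkward in the extriangulated setting, where deflations and inflations need not be epic or monic, so I would route the argument through Lemma \ref{lem:3.9} as above.
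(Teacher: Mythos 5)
Your proof is correct and follows essentially the same route as the paper: finite $\xi$-projective dimension forces $\overline{\mathcal{C}}(\mathbf{P}_M,\mathbf{P}_N)=\mathcal{C}(\mathbf{P}_M,\mathbf{P}_N)$ for (a)$\Rightarrow$(b) (and similarly for (c)), while for (d)$\Rightarrow$(a) you extract from the bounding element the eventual homotopy relation $1_{P_i}=d_{i+1}\sigma_i+\sigma_{i-1}d_i$ and feed the resulting vanishing of ${\rm \xi xt}_{\xi}^{n}(M,N)$ in high degrees into Lemma \ref{lem:3.9}(1), exactly as the paper does. Your only addition is to spell out the contracting-homotopy computation on $\mathcal{C}(\mathbf{P}_M,N)$ that the paper dismisses as ``easy to check,'' which is a welcome but not substantively different elaboration.
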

\begin{proof} We prove part (1); the proof of (2) is dual. $(a)\Rightarrow (b)$. Since $\xi$-${\rm pd}M<\infty$, there is a $\xi$-projective resolution of $M$,
say $\mathbf{P}_M$, with $(\mathbf{P}_M)_i=0$ for all $i\gg 0$. Hence $\overline{\mathcal{C}}(\mathbf{P}_M,-)={\mathcal{C}}(\mathbf{P}_M,-)$,
 so $\widetilde{{\rm \xi xt}}_{\mathcal{P}}^i(M,N)=0$ for any integer $i\in\mathbb{Z}$ and any object $N$ in $\mathcal{C}$.

$(b)\Rightarrow (d)$ is trivial.

$(d)\Rightarrow (a)$. Let $\xymatrix@C=2em{\mathbf{P}_M\ar[r]& M}$ be a $\xi$-projective resolution of $M$.
Then we have $${\rm id}_{\mathbf{P}_M}+\overline{\mathcal{C}}(\mathbf{P}_M,\mathbf{P}_M)_0\in{\rm Z}_0(\widetilde{\mathcal{C}}(\mathbf{P}_M,\mathbf{P}_M)).$$
Note that $\widetilde{{\rm \xi xt}}_{\mathcal{P}}^0(M,M)=0$ by hypothesis, then we have
$${\rm id}_{\mathbf{P}_M}+\overline{\mathcal{C}}(\mathbf{P}_M,\mathbf{P}_M)_0\in{\rm B}_0(\widetilde{\mathcal{C}}(\mathbf{P}_M,\mathbf{P}_M)).$$
Hence there is a $\varphi\in{\mathcal{C}}(\mathbf{P}_M,\mathbf{P}_M)_1$ such that
$${\rm id}_{\mathbf{P}_M}-d^{{\mathcal{C}}(\mathbf{P}_M,\mathbf{P}_M)}(\varphi)\in\overline{\mathcal{C}}(\mathbf{P}_M,\mathbf{P}_M)_0.$$
Consequently, there is  $\psi\in\overline{\mathcal{C}}(\mathbf{P}_M,\mathbf{P}_M)_0$ such that
${\rm id}_{\mathbf{P}_M}-\psi=d^{{\mathcal{C}}(\mathbf{P}_M,\mathbf{P}_M)}(\varphi)$. Since $\psi$ is bounded above,
$({\rm id}_{\mathbf{P}_M}-\psi)_j$ is the identity morphism on $(\mathbf{P}_M)_j$ for all $j\gg 0$.
 Let  $m$ be an integer such that $d_{s+1}^{\mathbf{P}_M}\varphi_s+\varphi_{s-1}d_{s}^{\mathbf{P}_M}={\rm id}_{(\mathbf{P}_M)_s}$ for any $s\geq m$.
  It is easy to check that ${\rm\xi xt}_{\xi}^{m+1}(M,N)=0$ for any object $N$ in $\mathcal{C}$, which deduces  $\xi$-${\rm pd}M\leq m$ by Lemma \ref{lem:3.9}(1).

The implications of $(a)\Rightarrow (c)\Rightarrow (d)$ can be proved similarly.
\end{proof}

Assume that $(\mathcal{T},\Sigma, \Delta)$ is a compactly generated triangulated category, where $\Sigma$ is the suspension functor and $\Delta$ is the triangulation. It follows from Krause \cite{Krause} and Beligiannis \cite{Bel1} that the class $\xi$ of pure triangles
(which is induced by the compact objects) is proper and $\mathcal{T}$ has enough $\xi$-projectives or $\xi$-injectives. Moreover, the object $X\in{\mathcal{P(\xi)}}$ is called the \emph{pure-projective} (see \cite[Definition 1.2(2)]{Krause}).  Following Beligiannis \cite{Bel1}, we set $\xi$-gl.dim$\mathcal{T}$ $=$ $\sup\{\xi\textrm{-}{\rm pd}M \ | \ \textrm{for} \ \textrm{any} \ M\in{\mathcal{T}}\}$.
As a corollary of Theorem \ref{thm1}, we have the following.

 \begin{cor}\label{cor1} Let $(\mathcal{T},\Sigma, \Delta)$ be a compactly generated triangulated category. Then the following are equivalent:
\begin{enumerate}
\item $\xi$-gl.dim$\mathcal{T}$ $<\infty$;

\item $\widetilde{{\rm \xi xt}}_{\mathcal{P}}^i(M,N)=0$ for any integer $i\in\mathbb{Z}$ and all objects $M$ and $N$ in $\mathcal{T}$;

\item $\widetilde{{\rm \xi xt}}_{\mathcal{P}}^0(M,M)=0$ for any object $M$ in $\mathcal{T}$.

\end{enumerate}
\end{cor}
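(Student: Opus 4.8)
The plan is to deduce Corollary~\ref{cor1} directly from Theorem~\ref{thm1}(1) by checking that its hypothesis holds in the compactly generated setting and then quantifying over all objects $M$. The key preliminary observation is that in a compactly generated triangulated category the pure-projectives $\mathcal{P}(\xi)$ form a \emph{generating} subcategory of $\mathcal{T}$: this follows from the cited work of Krause and Beligiannis, since every object admits a pure epimorphism from a pure-projective (enough $\xi$-projectives), so $\mathcal{C}(\mathcal{P}(\xi),M)=0$ forces $M=0$. With this in hand, Theorem~\ref{thm1}(1) applies to \emph{every} individual object $M\in\mathcal{T}$, giving for each fixed $M$ the equivalence of ``$\xi$-${\rm pd}\,M<\infty$'', the vanishing of $\widetilde{{\rm \xi xt}}_{\mathcal{P}}^i(M,N)$ for all $i,N$, and the vanishing of $\widetilde{{\rm \xi xt}}_{\mathcal{P}}^0(M,M)$.

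The proof then amounts to threading the supremum through these pointwise equivalences. First I would show $(1)\Rightarrow(2)$: if $\xi$-gl.dim$\mathcal{T}<\infty$, then in particular $\xi$-${\rm pd}\,M<\infty$ for every $M$, so condition (a) of Theorem~\ref{thm1}(1) holds for each $M$; invoking $(a)\Rightarrow(b)$ there yields $\widetilde{{\rm \xi xt}}_{\mathcal{P}}^i(M,N)=0$ for all $i\in\mathbb{Z}$ and all $N$, as required. The implication $(2)\Rightarrow(3)$ is trivial, obtained by specializing $N=M$ and $i=0$. For $(3)\Rightarrow(1)$, I would fix an arbitrary $M$, apply $(d)\Rightarrow(a)$ of Theorem~\ref{thm1}(1) to conclude $\xi$-${\rm pd}\,M<\infty$; the point that needs a moment of care is that this gives \emph{finiteness} for each $M$ separately, not a uniform bound, so passing to $(1)$ requires one more step.

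The main obstacle, and the only genuine content beyond bookkeeping, is precisely this last gap: deriving the \emph{uniform} finiteness $\sup_M \xi$-${\rm pd}\,M<\infty$ from the pointwise finiteness of all $\xi$-projective dimensions. I expect to close this using a standard change-of-rings / global dimension argument available in the compactly generated setting, exactly as in Beligiannis \cite[Proposition~4.17]{Bel1} and the surrounding results: the global $\xi$-dimension is finite as soon as every object has finite $\xi$-projective dimension, because one can compute it from the $\xi$-projective dimension of a single generator (or from the vanishing of $\xi{\rm xt}_{\xi}$ in a fixed degree, controlled via Lemma~\ref{lem:3.9}(1)). Concretely, the finiteness of $\xi$-${\rm pd}$ for all $M$ combined with the cohomological characterization in Lemma~\ref{lem:3.9}(1) forces a single degree $n$ past which $\xi{\rm xt}_{\xi}^{n+1}(-,-)$ vanishes identically, and that common $n$ bounds $\xi$-gl.dim$\mathcal{T}$. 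I would cite \cite{Bel1,Krause} for the requisite properties of the pure class rather than reprove them.
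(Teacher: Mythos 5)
Your reduction to Theorem \ref{thm1}(1) is exactly the route the paper intends (the corollary is stated there without a written proof, as an immediate consequence of the theorem), and your preliminary observation that the pure-projectives form a generating subcategory of $\mathcal{T}$ is correct, as are the implications $(1)\Rightarrow(2)\Rightarrow(3)$. The genuine issue is the step you yourself flag, and your proposed closure of it does not work as written. Pointwise finiteness of $\xi$-${\rm pd}\,M$ for every $M$ does not by itself ``force a single degree $n$ past which $\xi{\rm xt}_{\xi}^{n+1}(-,-)$ vanishes identically'': Lemma \ref{lem:3.9}(1) (that is, Beligiannis's Proposition 4.17) only converts a bound $\xi$-${\rm pd}\,M\leq n$ for a \emph{fixed} object $M$ into vanishing of $\xi{\rm xt}_{\xi}^{n+1}(M,-)$, with $n$ depending on $M$; it contains no uniformity whatsoever. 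Likewise there is in general no single test object whose $\xi$-projective dimension computes $\xi$-gl.dim$\mathcal{T}$ (already for module categories the global dimension is a supremum over \emph{all} cyclic modules, not the projective dimension of one generator), so the appeal to a ``change-of-rings / single generator'' argument is a gap, not a proof.

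The standard way to close the gap uses the coproducts available in a compactly generated category. Pure triangles are closed under arbitrary coproducts, because purity is detected by the compact objects and $\mathcal{T}(C,-)$ commutes with coproducts for $C$ compact; pure-projectives are closed under coproducts; hence a coproduct of $\xi$-projective resolutions is a $\xi$-projective resolution of the coproduct, and vanishing of $\xi{\rm xt}_{\xi}^{n+1}$ passes to direct summands by additivity, so that by Lemma \ref{lem:3.9}(1) a direct summand of an object of finite $\xi$-projective dimension has $\xi$-projective dimension at most as large. Now suppose $(3)$ holds but $\xi$-gl.dim$\mathcal{T}=\infty$; choose objects $M_k$ with $\xi$-${\rm pd}\,M_k\geq k$ and apply $(d)\Rightarrow(a)$ of Theorem \ref{thm1}(1) to the \emph{single} object $M=\bigoplus_{k\geq 0}M_k$: then $\xi$-${\rm pd}\,M<\infty$, while each $M_k$ is a direct summand of $M$, so $\xi$-${\rm pd}\,M\geq \xi$-${\rm pd}\,M_k\geq k$ for every $k$, a contradiction. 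With this substitution for your final paragraph, the argument is complete and agrees with the intended derivation from Theorem \ref{thm1}.
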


Let $R$ be a ring. Assume that $\mathcal{X}$ is a class of left $R$-modules. For a left $R$-module $M$, write $M\in{^{\perp_{\infty}}\mathcal{X}}$ (resp., $M\in{^{\perp}\mathcal{X}}$) if $\textrm{Ext}_{R}^{\geq1}(M,X)=0$ (resp., $\textrm{Ext}_{R}^{1}(M,X)=0$) for each $X\in\mathcal{X}$. Dually, we can define $M\in{\mathcal{X}^{\perp_{\infty}}}$ and $M\in{\mathcal{X}^{\perp}}$.
Given a left $R$-module $M$, we denote by $\Add M$ (resp., $\add M$) the class of all modules that
are isomorphic to direct summands of direct sums (resp., finite direct sums) of copies of $M$.

Recall that a left $R$-module $G$ is called \emph{tilting} \cite{LAHFUCol01,Colby1995} provided that the following hold:
\begin{enumerate}
\item[$(T1)$] $G$ has finite projective dimension.
\item[$(T2)$] $\textrm{Ext}_{R}^{i}(G,G^{(\lambda)})$ $=$ $0$ for each $i$ $\geq$ $1$ and for every cardinal $\lambda$.
\item[$(T3)$] There is a long exact sequence $0\ra R\ra G_{0}\ra \cdots\ra G_{r}\ra 0$ with $G_{i}$ $\in$ $\textrm{Add} T$ for $0$ $\leq$ $i$ $\leq$ $r$, where $r$ is the projective dimension of $G$.
\end{enumerate}

Moreover, a left $R$-module $\omega$ is said to be a \emph{Wakamatsu tilting module} \cite{Mantese,Wamastilting} if it has the following properties:

\begin{enumerate}
\item[$(W1)$] There exists an exact sequence
$\cdots\ra P_i\ra\cdots\ra  P_0\ra \omega\ra 0$
with  $P_i$ finitely generated and projective for each $i$ $\geq$ $0$;
\item [$(W2)$] $\textrm{Ext}_{R}^{i}(\omega,\omega)$ $=$ $0$ for each $i$ $\geq$ $1$;

\item[$(W3)$] There exists an exact sequence $0\ra R\ra \omega_0 \xrightarrow{f_0}  \cdots \ra \omega_i\xrightarrow{f_i}\cdots$ with $\omega_i$ $\in$ add$\omega$
and $\ker(f_i)$ $\in$ $^{\bot_\infty}\omega$ for each $i$ $\geq$ $0$.
\end{enumerate}

It is still an open problem whether a Wakamatsu tilting $R$-module of
finite projective dimension must be a tilting $R$-module whenever $R$ is an Artin algebra. This is known as Wakamatsu Tilting
Conjecture (see \cite[Chapter IV]{Beli and Reiten}). This conjecture is also related to many other homological conjectures and attract many
algebraists, see for instance \cite{Beli and Reiten,Huang,Mantese,Wang2019,Wei}.

Denote by $\mathcal{FP}_{\infty}$ the class of left $R$-modules possessing a projective resolution consisting of
finitely generated modules. Therefore, we have the following corollary which is a consequence of Theorem \ref{thm1}.

 \begin{cor}\label{cor2} Let $R$ be a ring and $\omega$ a Wakamatsu tilting left $R$-module with finite projective dimension. Fix an exact sequence $0\ra R\ra \omega_0 \xrightarrow{f_0} \cdots \ra \omega_i\xrightarrow{f_i}\cdots$ with $\omega_i$ $\in$ {\rm add$\omega$}
and $\ker (f_i)$ $\in$ $^{\bot_\infty}\omega$ for  $i$ $\geq$ $0$. If we set $A$ $=$ ${\bigoplus\limits_{i\geq0}}\ker (f_i)$, then the following are equivalent:

\begin{enumerate}
\item $\omega$ is a tilting module;

\item $A$ has finite projective dimension;

\item $\widetilde{{\rm \xi xt}}_{\mathcal{P}}^i(A,N)=0$ for any integer $i\in\mathbb{Z}$ and any left $R$-module $N$;

\item $\widetilde{{\rm \xi xt}}_{\mathcal{P}}^i(N,A)=0$ for any integer $i\in\mathbb{Z}$ and any left $R$-module $N$;
\item $\widetilde{{\rm \xi xt}}_{\mathcal{P}}^0(A,A)=0$.
\end{enumerate}
\end{cor}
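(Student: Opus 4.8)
The plan is to read the statement as an instance of Theorem \ref{thm1} glued to a purely representation-theoretic fact about Wakamatsu tilting modules. First I would fix the extriangulated structure on $\mathcal{C}=\Mod R$ given by $\mathbb{E}(-,-)=\Ext^{1}(-,-)$ with $\xi$ the (proper) class of all short exact sequences; this is the maximal exact structure, so the blanket hypotheses of Section 2 (enough $\xi$-projectives, enough $\xi$-injectives, Condition (WIC)) hold automatically. Under this choice $\mathcal{P}(\xi)$ is exactly the class of projective modules, which is a generating subcategory of $\Mod R$ since it contains $R$; moreover $\xi$-$\mathrm{pd}$ is the ordinary projective dimension and $\widetilde{{\rm \xi xt}}_{\mathcal{P}}^{i}(-,-)$ is the usual complete (Tate--Vogel) cohomology. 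With these identifications, applying Theorem \ref{thm1}(1) to the object $M=A$ gives at once the equivalences $(2)\Leftrightarrow(3)\Leftrightarrow(4)\Leftrightarrow(5)$, because (2) is precisely $\xi$-$\mathrm{pd}\,A<\infty$ and (3),(4),(5) are the conditions (b),(c),(d) of that theorem specialized to $M=A$.

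It remains to prove $(1)\Leftrightarrow(2)$, which is where the real work lies. I would first record the short exact sequences $0\to K_i\to\omega_i\to K_{i+1}\to 0$ coming from the fixed coresolution, with $K_0\cong R$ and $K_i=\ker f_i\in{}^{\bot_\infty}\omega$, so that $A=\bigoplus_{i\ge 0}K_i$ and hence $\Ext^{\ge 1}(A,\omega)=\prod_{i}\Ext^{\ge1}(K_i,\omega)=0$, i.e.\ $A\in{}^{\bot_\infty}\omega$. Next I would dispose of (T1) and (T2): (T1) is the standing hypothesis $\mathrm{pd}\,\omega<\infty$, while (W1) provides a resolution of $\omega$ by finitely generated projectives, so $\Ext^{i}(\omega,-)$ commutes with coproducts and (W2) upgrades to $\Ext^{i}(\omega,\omega^{(\lambda)})=0$ for all $i\ge1$ and all $\lambda$, which is (T2). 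Consequently $\omega$ is tilting if and only if it satisfies (T3), i.e.\ iff the $\add\omega$-coresolution of $R$ can be taken finite, and the whole problem reduces to showing (T3)$\Leftrightarrow\mathrm{pd}\,A<\infty$.

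For $(2)\Rightarrow(1)$ I would argue that $\mathrm{pd}\,A<\infty$ forces $d:=\sup_i\mathrm{pd}\,K_i<\infty$; then for $m\ge d$ the cosyzygy $K_m$ lies in ${}^{\bot_\infty}\omega$ and has $\mathrm{pd}\,K_m\le d\le m$, and the standard truncation argument for Wakamatsu tilting modules (cf.\ \cite{Mantese,Wei}) shows $K_m\in\add\omega$, so splicing yields the finite coresolution $0\to R\to\omega_0\to\cdots\to\omega_{m-1}\to K_m\to 0$, giving (T3). For the converse $(1)\Rightarrow(2)$ I would use that for a tilting module the minimal $\add\omega$-coresolution of $R$ with cosyzygies in ${}^{\bot_\infty}\omega$ is, up to summands, the truncation of the finite coresolution supplied by (T3), whence the $K_i$ have uniformly bounded projective dimension and $A$ has finite projective dimension. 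The main obstacle is exactly this equivalence $(1)\Leftrightarrow(2)$: the dimension shift along $0\to K_i\to\omega_i\to K_{i+1}\to0$ only yields $\mathrm{pd}\,K_{i+1}\le\max\{\mathrm{pd}\,\omega_i,\mathrm{pd}\,K_i+1\}$, so each individual $K_i$ has finite projective dimension but with an a priori bound that grows with $i$; \emph{uniform} boundedness cannot be read off naively and must be extracted from the finer structure of ${}^{\bot_\infty}\omega$ (self-orthogonality of $\omega$, the truncation lemma, and the comparison of the minimal coresolution with the finite one). Once $(1)\Leftrightarrow(2)$ is secured, the chain $(2)\Leftrightarrow(3)\Leftrightarrow(4)\Leftrightarrow(5)$ from the first paragraph closes the argument.
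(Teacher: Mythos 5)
Your first paragraph matches the paper exactly: the paper also obtains $(2)\Leftrightarrow(3)\Leftrightarrow(4)\Leftrightarrow(5)$ by specializing Theorem \ref{thm1}(1) to $\mathcal{C}=\Mod R$ with $\xi$ the class of all short exact sequences, where $\mathcal{P}(\xi)$ is the class of projectives (generating, since it contains $R$) and $\widetilde{{\rm \xi xt}}_{\mathcal{P}}$ is ordinary complete cohomology. Where you diverge is $(1)\Leftrightarrow(2)$, and this is where the gap lies. Note first that the paper does not prove this equivalence directly: it quotes the proof of $(1)\Rightarrow(2)$ of \cite[Theorem 4.4]{Wang2019}, and for $(2)\Rightarrow(1)$ it runs through $\ker(f_i)\in\mathcal{FP}_\infty$ \cite[Theorem 1.8]{BP}, closure of $A^{\bot_\infty}$ under direct sums \cite[Lemma 3.1.6]{GT}, finiteness of projective dimension on ${}^{\bot}(A^{\bot_\infty})$ \cite[Lemma 2.2(2)]{Wang2019}, the identification ${}^{\bot}(A^{\bot_\infty})\cap A^{\bot_\infty}=\Add T$ for a tilting module $T$ \cite[Theorem 1.1]{Wang2019}, and finally \cite[Corollary 4.5]{Wang2019}.

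Your $(2)\Rightarrow(1)$ sketch is salvageable, and in fact more elementary than the paper's route, but you leave the key step as a citation to a ``standard truncation argument'' that you should instead spell out: since $\mathrm{pd}\,A=\sup_i\mathrm{pd}\,K_i=:d<\infty$, dimension shifting along $0\to K_j\to\omega_j\to K_{j+1}\to 0$ (legitimate because $K_{m+1}\in{}^{\bot_\infty}\omega$ and $\omega_j\in\add\omega$) gives
$$\oext^1_R(K_{m+1},K_m)\cong \oext^{m+1}_R(K_{m+1},K_0)=\oext^{m+1}_R(K_{m+1},R)=0 \quad\text{for } m\geq d,$$
so the sequence $0\to K_m\to\omega_m\to K_{m+1}\to0$ splits and $K_m\in\add\omega$, yielding the finite coresolution $0\to R\to\omega_0\to\cdots\to\omega_{m-1}\to K_m\to0$. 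Even so, you still owe the (known but nontrivial) fact that $(T1)+(T2)$ together with a finite $\add\omega$-coresolution of \emph{arbitrary} finite length implies tilting --- the length $m$ may exceed $\mathrm{pd}\,\omega$, while $(T3)$ as stated demands length $r=\mathrm{pd}\,\omega$; this is in substance \cite[Corollary 4.5]{Wang2019}, so even your elementary route terminates where the paper's does.

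The genuine gap is $(1)\Rightarrow(2)$. Your claim that ``the minimal $\add\omega$-coresolution of $R$ \ldots is, up to summands, the truncation of the finite coresolution supplied by $(T3)$'' is both unjustified and misdirected: the coresolution in the statement is an \emph{arbitrary} one with cosyzygies in ${}^{\bot_\infty}\omega$, not a minimal one, so one must compare two arbitrary such coresolutions. A dual Schanuel argument can do this, but it requires $\oext^1$-vanishing in \emph{both} directions: $\oext^1_R(K_{i+1},G_i)=0$, which needs $K_{i+1}\in\mathcal{FP}_\infty$ (via \cite[Theorem 1.8]{BP}) because $G_i\in\Add\omega$ may be an infinite direct sum, and $\oext^1_R(C_{i+1},\omega_i)=0$, where the $C_j$ are the cosyzygies of the $(T3)$-sequence --- and this second condition is \emph{not} part of $(T3)$ and must itself be proved (it follows by downward induction from $C_r\in\Add\omega$ and $\oext^{\geq1}_R(\omega^{(\lambda)},\omega)=0$). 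Even granting all that, one must still bound $\mathrm{pd}\,C_i$ and deduce a uniform bound $\mathrm{pd}\,K_i\leq r$ for $i$ beyond the length of the finite coresolution, since the naive shift $\mathrm{pd}\,K_{i+1}\leq\max\{\mathrm{pd}\,\omega_i,\mathrm{pd}\,K_i+1\}$ grows with $i$ --- a difficulty you acknowledge (``uniform boundedness \ldots must be extracted from the finer structure'') but never resolve. As written, your $(1)\Rightarrow(2)$ is a statement of intent rather than a proof; the paper sidesteps the entire issue by invoking the proof of \cite[Theorem 4.4]{Wang2019}.
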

 \begin{proof} $(1)\Rightarrow(2)$ holds by the proof of (1)$\Rightarrow$(2) in \cite[Theorem 4.4]{Wang2019}.

 $(2)\Rightarrow(1)$. Note that $\ker (f_i)$ $\in$ $\mathcal{FP}_{\infty}$ for any $i\geq0$ by \cite[Theorem 1.8]{BP}. It follows from \cite[Lemma 3.1.6]{GT} that $A^{\bot_\infty}$ is closed under direct sums. Since $A$ has finite projective dimension, each module $M$ in $^\bot(A^{\bot_{\infty}})$ has finite projective dimension by \cite[Lemma 2.2(2)]{Wang2019}. If we set $\mathcal{K}_{A}={^\bot(A^{\bot_{\infty}})}\cap A^{\bot_{\infty}}$, then there is a tilting $R$-module $T$ such that $\mathcal{K}_{A}=\textrm{Add}T$ by \cite[Theorem 1.1]{Wang2019}. So (1) holds by \cite[Corollary 4.5]{Wang2019}.

 $(2)\Leftrightarrow (3)\Leftrightarrow (4)\Leftrightarrow(5)$ follow from Theorem \ref{thm1}.
 \end{proof}

%\begin{rem} \label{rem: 2017081701} \rm{
%It is still an open problem whether a Wakamatsu tilting $R$-module of
%finite projective dimension must be a tilting $R$-module whenever $R$ is an Artin algebra. This is known as Wakamatsu Tilting
%Conjecture (see \cite[Chapter IV]{Beli and Reiten}). Mantese and Reiten  \cite{Mantese} showed that the Wakamatsu Tilting Conjecture is a special case of the Finitistic Dimension Conjecture. These conjectures are also related to many other homological conjectures and attract
%many algebraists, see for instance \cite{Anegeleri,Beli and Reiten,Celikbas,Mantese,Wei2011,XiCC,Zimmermann}.}
%\end{rem}

Let $\Lambda$ be an Artin algebra. Denote by $\mathcal{GP}(\Lambda)$ the class of Gorenstein projective left $\Lambda$-modules and by $\mathcal{GI}(\Lambda)$ the class of Gorenstein injective left $\Lambda$-modules. Recall from Beligiannis and Krause \cite{Beliandkrause} that $\Lambda$ is called \emph{virtually Gorenstein} if  $\mathcal{GP}(\Lambda)^{\perp}= {^{\perp}\mathcal{GI}(\Lambda)}$. It is not hard to check that for any virtually Gorenstein algebra $\Lambda$,  an exact sequence
$0\ra X\ra Y\ra Z\ra 0$ of left $\Lambda$-modules  is $\textrm{Hom}_{\Lambda}(\mathcal{GP}(\Lambda),-)$-exact if and only if it is $\textrm{Hom}_{\Lambda}(-,\mathcal{GI}(\Lambda))$-exact.  By \cite[Theorem 2.2]{WLH}, we have an exact category $(\textrm{Mod}\Lambda,\varepsilon)$, where $\textrm{Mod}\Lambda$ is the class of left $\Lambda$-modules and $\varepsilon$ is the class of $\textrm{Hom}_{\Lambda}(\mathcal{GP}(\Lambda),-)$-exact sequences $0\ra X\ra Y\ra Z\ra 0$ of left $\Lambda$-modules. We refer to \cite{chen,sergio,ZAD} for a more discussion on this matter.

On the other hand,
it follows from \cite[Theorem 12.3.1]{EJ2} that $\Lambda$ is Gorenstein if and only if every left $\Lambda$-module has finite Gorenstein projective dimension.  Note that a virtually Gorenstein algebra is not Gorenstein  in general by
\cite[4.3, p.560]{Beliandkrause}. By Theorem \ref{thm1}, we have the following corollary which characterizes when a virtually Gorenstein algebra is Gorenstein.

\begin{cor}\label{cor3} Let $\Lambda$ be a virtually Gorenstein algebra. If we assume that $\mathcal{C}$ is the exact category {\rm $(\textrm{Mod}\Lambda,\varepsilon)$} and $\xi=\varepsilon$ is the class of $\textrm{\rm Hom}_{\Lambda}(\mathcal{GP}(\Lambda),-)$-exact sequences of left $\Lambda$-modules
$0\ra X\ra Y\ra Z\ra 0$, then the following are equivalent:

\begin{enumerate}
\item $\Lambda$ is Gorenstein;

\item $\widetilde{{\rm \xi xt}}_{\mathcal{P}}^i(M,N)=0$ for any integer $i\in\mathbb{Z}$ and all left $\Lambda$-modules $M$ and $N$;

\item $\widetilde{{\rm \xi xt}}_{\mathcal{P}}^0(M,M)=0$ for any left $\Lambda$-module $M$;

\item $\widetilde{{\rm \xi xt}}_{\mathcal{I}}^i(M,N)=0$ for any integer $i\in\mathbb{Z}$ and all left $\Lambda$-modules $M$ and $N$;

\item $\widetilde{{\rm \xi xt}}_{\mathcal{I}}^0(M,M)=0$ for any left $\Lambda$-module $M$.

\end{enumerate}
\end{cor}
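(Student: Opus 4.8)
The plan is to reduce everything to Theorem \ref{thm1} together with the classical characterization of Gorenstein algebras via finiteness of Gorenstein (co)homological dimensions, so the first task is to translate the relative invariants of the exact category $\mathcal{C}=(\textrm{Mod}\Lambda,\varepsilon)$ into classical ones. Since $\varepsilon$ consists of the $\textrm{Hom}_{\Lambda}(\mathcal{GP}(\Lambda),-)$-exact short exact sequences, every $G\in\mathcal{GP}(\Lambda)$ makes $\textrm{Hom}_{\Lambda}(G,-)$ exact on $\varepsilon$, whence $\mathcal{GP}(\Lambda)\subseteq\mathcal{P}(\xi)$. Conversely, a Gorenstein projective precover of any module gives an $\varepsilon$-deflation onto it from an object of $\mathcal{GP}(\Lambda)$, so $\mathcal{C}$ has enough $\xi$-projectives lying in $\mathcal{GP}(\Lambda)$; as $\mathcal{P}(\xi)$ is closed under direct summands, a splitting argument then forces $\mathcal{P}(\xi)=\mathcal{GP}(\Lambda)$. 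Dually, using the virtually Gorenstein identity $\mathcal{GP}(\Lambda)^{\perp}={}^{\perp}\mathcal{GI}(\Lambda)$ (which is precisely what makes $\varepsilon$ simultaneously the class of $\textrm{Hom}_{\Lambda}(-,\mathcal{GI}(\Lambda))$-exact sequences), one obtains $\mathcal{I}(\xi)=\mathcal{GI}(\Lambda)$. Since the ordinary projective and injective $\Lambda$-modules lie in $\mathcal{GP}(\Lambda)$ and $\mathcal{GI}(\Lambda)$ and $\mathcal{C}$-morphisms are just $\Lambda$-homomorphisms, $\mathcal{P}(\xi)$ is generating and $\mathcal{I}(\xi)$ is cogenerating, so the hypotheses of Theorem \ref{thm1} are satisfied.

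Next I would check that $\xi$-${\rm pd}\,M$ coincides with the Gorenstein projective dimension ${\rm Gpd}_{\Lambda}M$ for every module $M$ (and dually $\xi$-${\rm id}\,M={\rm Gid}_{\Lambda}M$). One inequality is immediate: forgetting the exact structure, an $\varepsilon$-exact $\xi$-projective resolution is an ordinary exact resolution by Gorenstein projectives, so ${\rm Gpd}_{\Lambda}M\le\xi$-${\rm pd}\,M$. For the reverse, if ${\rm Gpd}_{\Lambda}M\le n$ I would take any $\xi$-projective resolution (available since $\mathcal{C}$ has enough $\xi$-projectives) and apply the standard fact that the $n$-th syzygy of an exact resolution by Gorenstein projectives of a module of Gorenstein projective dimension at most $n$ is again Gorenstein projective. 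That syzygy then lies in $\mathcal{P}(\xi)$, truncating the resolution and giving $\xi$-${\rm pd}\,M\le n$. In particular $\xi$-${\rm pd}\,M<\infty$ if and only if ${\rm Gpd}_{\Lambda}M<\infty$, and dually for injective dimensions.

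With this dictionary the corollary is formal. Applying Theorem \ref{thm1}(1) to each module $M$ shows that conditions $(2)$ and $(3)$ each hold for all $M$ precisely when $\xi$-${\rm pd}\,M<\infty$ for all $M$, that is, when ${\rm Gpd}_{\Lambda}M<\infty$ for all $M$; likewise Theorem \ref{thm1}(2) shows that $(4)$ and $(5)$ each hold for all $M$ precisely when ${\rm Gid}_{\Lambda}M<\infty$ for all $M$. By \cite[Theorem 12.3.1]{EJ2} each of these finiteness conditions is equivalent to $\Lambda$ being Gorenstein, which is condition $(1)$. Chaining these equivalences yields $(1)\Leftrightarrow(2)\Leftrightarrow(3)\Leftrightarrow(4)\Leftrightarrow(5)$.

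The main obstacle I anticipate is the first paragraph, namely pinning down $\mathcal{P}(\xi)=\mathcal{GP}(\Lambda)$ and $\mathcal{I}(\xi)=\mathcal{GI}(\Lambda)$ cleanly, since this is the only place where the virtually Gorenstein hypothesis is genuinely used: it is what guarantees that $\varepsilon$ is at once $\textrm{Hom}_{\Lambda}(\mathcal{GP}(\Lambda),-)$-exact and $\textrm{Hom}_{\Lambda}(-,\mathcal{GI}(\Lambda))$-exact, and that Gorenstein projective precovers and Gorenstein injective preenvelopes are $\varepsilon$-admissible. Once this identification of the relative invariants of $\mathcal{C}$ with the classical Gorenstein dimensions is in place, the remainder is a bookkeeping application of Theorem \ref{thm1}.
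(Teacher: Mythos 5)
Your proposal is correct and follows essentially the same route as the paper, which states this corollary as a direct consequence of Theorem \ref{thm1} together with \cite[Theorem 12.3.1]{EJ2} and the observed coincidence of $\textrm{Hom}_{\Lambda}(\mathcal{GP}(\Lambda),-)$-exactness and $\textrm{Hom}_{\Lambda}(-,\mathcal{GI}(\Lambda))$-exactness over a virtually Gorenstein algebra. Your first two paragraphs simply make explicit the dictionary $\mathcal{P}(\xi)=\mathcal{GP}(\Lambda)$, $\mathcal{I}(\xi)=\mathcal{GI}(\Lambda)$, $\xi\textrm{-pd}={\rm Gpd}_{\Lambda}$ and $\xi\textrm{-id}={\rm Gid}_{\Lambda}$ (via special precovers/preenvelopes and the standard syzygy argument) that the paper leaves implicit, and these verifications are sound.
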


\section{\bf $\xi$-complete cohomology for objects with finite $\xi$-$\mathcal{G}$projective  dimension}

Recall from \cite{HZZ} that the $\xi$-$\mathcal{G}$projective dimension $\xi$-$\mathcal{G}$${\rm pd} M$ of an object $M\in\mathcal{C}$ is defined inductively. If $M\in\mathcal{GP}(\xi)$ then define $\xi$-$\mathcal{G}$${\rm pd} M=0$.
Next by induction, for an integer $n>0$, put $\xi$-$\mathcal{G}$${\rm pd} M\leq n$ if there exists an $\mathbb{E}$-triangle $\xymatrix{K\ar[r]&G\ar[r]&M\ar@{-->}[r]&}$ in $\xi$ with $G\in \mathcal{GP}(\xi)$ and $\xi$-$\mathcal{G}{\rm pd} K\leq n-1$.

We define $\xi$-$\mathcal{G}$${\rm pd} M=n$ if $\xi$-$\mathcal{G}{\rm pd} M\leq n$ and $\xi$-$\mathcal{G}{\rm pd} M\nleq n-1$. If $\xi$-$\mathcal{G}$${\rm pd} M\neq n$ for all $n\geq 0$, we set $\xi$-$\mathcal{G}$${\rm pd} M=\infty$.

%Dually,  we can define the $\xi$-$\mathcal{G}$injective dimension  $\xi$-$\mathcal{G}{\rm id} A$ of an object $A\in\mathcal{C}$.

%We let $\widetilde{\mathcal{GP}}(\xi)$ denote the full subcategory of $\mathcal{C}$ whose objects are of finite $\xi$-$\mathcal{G}$projective dimension.

The following observation is useful in this section.

%
%\begin{definition}\label{df:3.2} Let $M\in\mathcal{C}$ be an object. A \emph{$\xi$-complete resolution} of $M$ is a diagram $\xymatrix@C=2em{\mathbf{T}\ar[r]^{\nu}&\mathbf{P}\ar[r]^{\pi}&M}$ of morphisms of complexes satisfying:
%  (1)  $\pi:\mathbf{P}\ra A$ is a $\xi$-projective resolution of $A$;
%  (2) $T$ is a complete $\mathcal{P}(\xi)$-exact complex;
%  (3) $\mu:\mathbf{T}\ra \mathbf{P}$ is a morphism such that $\mu_{i}$ $=$ {\rm id$_{T_{i}}$} for all $i\gg 0$.
%  A $\xi$-complete resolution is \emph{split} if $\mu_{i}$ has a section for all $i\in{\mathbb{Z}}$.
%\end{definition}
%
%Dually, for an object $B\in\mathcal{C}$ be an object with finite $\xi$-$\mathcal{G}$injective  dimension, we can construct a $\xi$-complete coresolution
%$\xymatrix@C=2em{B\ar[r]^{l}&\mathbf{I}\ar[r]^{\mu}&\mathbf{E}}$ of $B$, in which  $B\ra \mathbf{I}$ is a $\xi$-injective resolution of $B$
%and $\mathbf{E}$ is a complete $\xi$-injective coresolution.

%\begin{lem} \cite[Lemma 3.7(2)]{HZZ}\label{lem:3.11'} If $\xymatrix{A\ar[r]^{f}&B\ar[r]^{f}&C\ar@{-->}[r]^{\delta}&}$ is an $\mathbb{E}$-triangle in $\xi$, then $$\xymatrix{A\ar[r]^{\tiny\begin{bmatrix}0\\f\end{bmatrix}\qquad }&D\oplus B\ar[r]^{\tiny\begin{bmatrix}1&0\\0&g\end{bmatrix}}&
%D\oplus C\ar@{-->}[r]^{\ \ \ \ {\tiny\begin{bmatrix}0&1\end{bmatrix}}^*\delta}&}$$ is an $\mathbb{E}$-triangle.
%\end{lem}

\begin{lem}\label{lem:3.12'} Let $\xymatrix{A\ar[r]^{f}&B\ar[r]^{g}&C\ar@{-->}[r]^{\delta}&}$ be an $\mathbb{E}$-triangle in $\xi$. For any morphism $\alpha:B\ra D$, there exists an $\mathbb{E}$-triangle in $\xi$
$$\xymatrix{A\ar[r]^{\tiny\begin{bmatrix}-\alpha f\\f\end{bmatrix}\qquad }&D\oplus B\ar[r]^{\tiny\begin{bmatrix}1&\alpha\\0&g\end{bmatrix}}&
D\oplus C\ar@{-->}[r]^{\ \ \ \ {\tiny\begin{bmatrix}0&1\end{bmatrix}}^*\delta}&.}$$
\end{lem}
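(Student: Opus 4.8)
The plan is to realize the asserted $\mathbb{E}$-triangle as a direct sum of the given one with a split $\mathbb{E}$-triangle, and then to correct it by an automorphism of the middle term. This needs only that $\xi$ contains the split $\mathbb{E}$-triangles, is closed under finite coproducts, and is closed under isomorphisms (Definition \ref{def:proper class}), together with the additivity of $\mathbb{E}$ and of the realization $\mathfrak{s}$. The one point requiring genuine care will be to check that the extension of the direct-sum triangle coincides with the base change $\left[\begin{smallmatrix}0&1\end{smallmatrix}\right]^{*}\delta$ written in the statement; everything else is formal matrix bookkeeping.

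First I would form the direct sum of the given $\mathbb{E}$-triangle (with inflation $f$, deflation $g$, and extension $\delta$) and the split $\mathbb{E}$-triangle $0\to D\xrightarrow{1_{D}}D$ with extension $0$, the latter lying in $\Delta_{0}\subseteq\xi$. Since $\mathfrak{s}$ is additive and $\xi$ is closed under finite coproducts, ordering the summands so that $D$ comes first gives an $\mathbb{E}$-triangle in $\xi$ with inflation $\left[\begin{smallmatrix}0\\f\end{smallmatrix}\right]\colon A\to D\oplus B$, deflation $\left[\begin{smallmatrix}1&0\\0&g\end{smallmatrix}\right]\colon D\oplus B\to D\oplus C$, and extension $0\oplus\delta\in\mathbb{E}(D\oplus C,A)$.

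Next I would identify $0\oplus\delta$. Writing $\iota_{C}=\left[\begin{smallmatrix}0\\1\end{smallmatrix}\right]\colon C\to D\oplus C$ and $\iota_{D}=\left[\begin{smallmatrix}1\\0\end{smallmatrix}\right]\colon D\to D\oplus C$ for the two inclusions, additivity of $\mathbb{E}$ yields an isomorphism $\mathbb{E}(D\oplus C,A)\cong\mathbb{E}(D,A)\times\mathbb{E}(C,A)$, $\eta\mapsto(\iota_{D}^{*}\eta,\iota_{C}^{*}\eta)$. Since $\left[\begin{smallmatrix}0&1\end{smallmatrix}\right]\iota_{C}=1_{C}$ and $\left[\begin{smallmatrix}0&1\end{smallmatrix}\right]\iota_{D}=0$, the element $\left[\begin{smallmatrix}0&1\end{smallmatrix}\right]^{*}\delta$ maps to $(0,\delta)$; the direct-sum extension $0\oplus\delta$ maps to the same pair $(0,\delta)$. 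Hence $0\oplus\delta=\left[\begin{smallmatrix}0&1\end{smallmatrix}\right]^{*}\delta$, the extension appearing in the statement.

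Finally I would correct the maps by the automorphism $b=\left[\begin{smallmatrix}1&-\alpha\\0&1\end{smallmatrix}\right]$ of $D\oplus B$. A direct computation gives $b\left[\begin{smallmatrix}0\\f\end{smallmatrix}\right]=\left[\begin{smallmatrix}-\alpha f\\f\end{smallmatrix}\right]$ and $\left[\begin{smallmatrix}1&\alpha\\0&g\end{smallmatrix}\right]b=\left[\begin{smallmatrix}1&0\\0&g\end{smallmatrix}\right]$, so that $(1_{A},b,1_{D\oplus C})$ carries the underlying sequence of the direct-sum $\mathbb{E}$-triangle isomorphically onto the sequence in the statement, fixing the end terms $A$ and $D\oplus C$. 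Because a realization is an equivalence class of sequences closed under isomorphisms fixing the end terms, the sequence in the statement is itself a realization of $0\oplus\delta=\left[\begin{smallmatrix}0&1\end{smallmatrix}\right]^{*}\delta$, and $(1_{A},b,1_{D\oplus C})$ is then an isomorphism of $\mathbb{E}$-triangles between it and the direct-sum triangle. As $\xi$ is closed under isomorphisms, the $\mathbb{E}$-triangle in the statement belongs to $\xi$, completing the proof.
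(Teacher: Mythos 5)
Your proof is correct, and its skeleton coincides with the paper's: both pass through the intermediate $\mathbb{E}$-triangle $A\xrightarrow{\left[\begin{smallmatrix}0\\f\end{smallmatrix}\right]}D\oplus B\xrightarrow{\left[\begin{smallmatrix}1&0\\0&g\end{smallmatrix}\right]}D\oplus C$ with extension $\left[\begin{smallmatrix}0&1\end{smallmatrix}\right]^{*}\delta$, and both then twist by the same unipotent automorphism $\left[\begin{smallmatrix}1&-\alpha\\0&1\end{smallmatrix}\right]$ of $D\oplus B$, concluding because $\xi$ is closed under isomorphisms. The genuine difference is how the intermediate triangle is obtained and placed in $\xi$: the paper simply cites \cite[Lemma 3.7(2)]{HZZ} for the triangle and invokes closure of $\xi$ under \emph{base change}, whereas you construct it from scratch as the direct sum of the given triangle with the split triangle $0\to D\xrightarrow{1_D}D$, using $\Delta_{0}\subseteq\xi$, closure under finite coproducts, and additivity of $\mathbb{E}$ and $\mathfrak{s}$, and you then supply the one non-formal verification that the paper absorbs into its citation, namely $0\oplus\delta=\left[\begin{smallmatrix}0&1\end{smallmatrix}\right]^{*}\delta$, which you check correctly via the additivity isomorphism $\mathbb{E}(D\oplus C,A)\cong\mathbb{E}(D,A)\times\mathbb{E}(C,A)$ given by restriction along the two inclusions. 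What your route buys is self-containedness and slightly weaker hypotheses: it shows the lemma follows from the ``additive'' axioms of a proper class (split triangles, coproducts, isomorphism closure) in Definition \ref{def:proper class} without appealing to base change at all; what the paper's route buys is brevity and consistency with its systematic reuse of the earlier lemma from \cite{HZZ}. One cosmetic remark: once you know the corrected sequence is isomorphic to the direct-sum sequence via $(1_{A},b,1_{D\oplus C})$, it lies in the \emph{same} equivalence class realizing $\left[\begin{smallmatrix}0&1\end{smallmatrix}\right]^{*}\delta$, so it is literally the same $\mathbb{E}$-triangle and the final appeal to isomorphism closure of $\xi$ is not even needed; this does not affect correctness.
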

\begin{proof} It follows from \cite[Lemma 3.7(2)]{HZZ} that there exists an $\mathbb{E}$-triangle
$$\xymatrix{A\ar[r]^{\tiny\begin{bmatrix}0\\f\end{bmatrix}\qquad }&D\oplus B\ar[r]^{\tiny\begin{bmatrix}1&0\\0&g\end{bmatrix}}&
D\oplus C\ar@{-->}[r]^{\ \ \ \ {\tiny\begin{bmatrix}0&1\end{bmatrix}}^*\delta}&.}$$
Moreover, it is an $\mathbb{E}$-triangle in $\xi$ because $\xi$ is closed under base change. It is easy to check that the following is a commutative diagram.

$$\xymatrix{&D\oplus B\ar[dr]^{\tiny\begin{bmatrix}1&0\\0&g\end{bmatrix}}\ar[dd]_{\tiny\begin{bmatrix}1&-\alpha\\0&1\end{bmatrix}}^\cong&\\
A\ar[dr]_{\tiny\begin{bmatrix}-\alpha f\\f\end{bmatrix}}\ar[ur]^{\tiny\begin{bmatrix}0\\f\end{bmatrix}}&&D\oplus C\\
&D\oplus B\ar[ur]_{\tiny\begin{bmatrix}1&\alpha\\0&g\end{bmatrix}}&
}$$
Hence $\xymatrix{A\ar[r]^{\tiny\begin{bmatrix}-\alpha f\\f\end{bmatrix}\qquad }&D\oplus B\ar[r]^{\tiny\begin{bmatrix}1&\alpha\\0&g\end{bmatrix}}&
D\oplus C\ar@{-->}[r]^{\ \ \ \ {\tiny\begin{bmatrix}0&1\end{bmatrix}}^*\delta}&}$ is an $\mathbb{E}$-triangle in $\xi$.
\end{proof}

\begin{lem} \emph{(see \cite[Proposition 4.13(2)]{HZZ})}\label{lem:3.14'} Let $f:A\ra B$ and $g:B\ra C$ be morphisms in $\mathcal{C}$.  If the composition $gf$ is a $\xi$-deflation, then $g$ is a $\xi$-deflation.
\end{lem}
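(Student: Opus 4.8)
The plan is to reduce the statement to the saturation axiom (part (3) of the definition of a proper class). First I would use Condition (WIC)(2): since $gf$ is a $\xi$-deflation it is in particular a deflation, so $g$ is a deflation. Hence there is an $\mathbb{E}$-triangle $\theta\colon L\ra B\ra C$ whose deflation is $g$, realizing some extension $\theta\in\mathbb{E}(C,L)$; the entire task is to show that this $\mathbb{E}$-triangle lies in $\xi$, which is exactly the assertion that $g$ is a $\xi$-deflation. I would also fix, using the hypothesis that $gf$ is a $\xi$-deflation, an $\mathbb{E}$-triangle $\eta\colon N\ra A\ra C$ in $\xi$ whose deflation is $gf$.

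Both $\theta$ and $\eta$ end in the same object $C$, so I would feed the pair to Lemma \ref{lem1}, taking $\theta$ as the first triangle (so $\delta_1=\theta$, $y_1=g$) and $\eta$ as the second (so $\delta_2=\eta$, $y_2=gf$). This yields an object $M$ and a commutative diagram in which $\mathfrak{s}((gf)^*\theta)$ is realized by an $\mathbb{E}$-triangle of the form $L\ra M\ra A$. The key observation is that the base change of an $\mathbb{E}$-triangle along its own deflation splits: because $g$ is the deflation of $\theta$ one has $g^*\theta=0$, and hence $(gf)^*\theta=f^*(g^*\theta)=0$. Therefore the $\mathbb{E}$-triangle $L\ra M\ra A$ realizing $(gf)^*\theta$ is split, so it lies in $\Delta_0\subseteq\xi$.

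At this point the two $\mathbb{E}$-triangles $N\ra A\ra C$ (which is $\eta$, and so in $\xi$) and $L\ra M\ra A$ (which is split, hence in $\xi$) are precisely the two inputs demanded by the saturation axiom, with indices arranged so that its conclusion is exactly that $\theta\colon L\ra B\ra C$ belongs to $\xi$; this is what we want. The step I expect to be the main obstacle is the bookkeeping: one must verify the splitting fact $y^*\delta=0$ for the deflation $y$ of an $\mathbb{E}$-triangle (a basic property of extriangulated categories, so that $\mathfrak{s}((gf)^*\theta)$ really is a split triangle), and then carefully match the roles $A_1,B_1,A_2,B_2,M$ of Lemma \ref{lem1} against the hypotheses and conclusion of the saturation axiom so that $\theta$ lands as the concluded triangle rather than as a hypothesis.
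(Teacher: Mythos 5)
Your proof is correct, and it is essentially the intended one: the paper itself gives no argument for this lemma but quotes it from \cite[Proposition 4.13(2)]{HZZ}, where the proof runs exactly along your lines --- Condition (WIC)(2) to get that $g$ is a deflation realizing some $\theta\in\mathbb{E}(C,L)$, the vanishing $g^*\theta=0$ (hence $(gf)^*\theta=f^*(g^*\theta)=0$, so the triangle $L\ra M\ra A$ produced by Lemma \ref{lem1} is split and lies in $\Delta_0\subseteq\xi$), and then the saturation axiom with $\delta_1=\theta$, $\delta_2=\eta$. Your bookkeeping of the roles in Lemma \ref{lem1} against the saturation axiom is exactly right; indeed, saturation is formulated precisely so that this deduction goes through.
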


\begin{definition}\label{df:3.2} {\rm Let $M\in\mathcal{C}$ be an object. A \emph{$\xi$-complete resolution} of $M$ is a diagram $$\xymatrix@C=2em{\mathbf{T}\ar[r]^{\nu}&\mathbf{P}\ar[r]^{\pi}&M}$$ of morphisms of complexes satisfying:
  (1)  $\pi:\mathbf{P}\ra M$ is a $\xi$-projective resolution of $M$;
  (2) $\mathbf{T}$ is a complete $\xi$-projective resolution;
  (3) $\nu:\mathbf{T}\ra \mathbf{P}$ is a morphism such that $\nu_{i}$ $=$ {\rm id$_{T_{i}}$} for all $i\gg 0$.
  A $\xi$-complete resolution is \emph{split} if $\nu_{i}$ has a section {(}i.e., there exists a morphism $\eta_{i}:{P}_{i}\rightarrow {T}_{i}$ such that $\nu_{i}\eta_{i}={\rm id}_{{P}_{i}}${)} for all $i\in{\mathbb{Z}}$.}
\end{definition}

%\begin{cons}\label{Con:3.13} \end{cons}
%It follows from Constructions \ref{Con:3.10} and \ref{Con:3.13} that we have following result.
The following proposition characterizes when an object in $\mathcal{C}$ admits a $\xi$-complete resolution.

\begin{prop}\label{prop:3.15} Let $M$ be an object in $\mathcal{C}$. Then the following are equivalent for any non-negative integer $n$:
\begin{enumerate}
\item $\xi$-$\mathcal{G}{\rm pd} M\leq n$;
\item $M$ has a $\xi$-complete resolution $\xymatrix@C=2em{\mathbf{T}\ar[r]^{\nu}&\mathbf{P}\ar[r]^{\pi}&M}$ such that $\nu_{i}$ is an isomorphism for each $i\geq n$;
\item $M$ has a split $\xi$-complete resolution $\xymatrix@C=2em{\mathbf{S}\ar[r]^{\mu}&\mathbf{P}\ar[r]^{\pi}&M}$ such that $\mu_{i}$ is an isomorphism for each $i\geq n$.
\end{enumerate}
\end{prop}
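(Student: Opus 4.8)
The plan is to prove the cycle $(1)\Rightarrow(3)\Rightarrow(2)\Rightarrow(1)$. The implication $(3)\Rightarrow(2)$ costs nothing: a split $\xi$-complete resolution $\mathbf{S}\to\mathbf{P}\to M$ is by Definition \ref{df:3.2} already a $\xi$-complete resolution once one forgets the sections, and the hypothesis that $\mu_i$ is an isomorphism for $i\geq n$ is exactly the condition imposed on $\nu_i$ in $(2)$.

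For $(2)\Rightarrow(1)$ I would read off the $n$-th syzygy. Writing the complete $\xi$-projective resolution $\mathbf{T}$ with its defining $\mathbb{E}$-triangles $K_{j+1}\to T_j\to K_j$ in $\xi$, the object $K_n$ is determined up to isomorphism by the terms and differentials of $\mathbf{T}$ in degrees $\geq n$, and the same holds for the $n$-th syzygy of $\mathbf{P}$. Since $\nu$ is a chain map with $\nu_n$ and $\nu_{n+1}$ isomorphisms, these two syzygies are isomorphic, so the $n$-th syzygy of $\mathbf{P}$ is isomorphic to $K_n\in\mathcal{GP}(\xi)$. Then $\mathbf{P}$ presents $M$ as the end of a $\xi$-exact complex $0\to K_n\to P_{n-1}\to\cdots\to P_0\to M$ with $P_0,\dots,P_{n-1}\in\mathcal{P}(\xi)\subseteq\mathcal{GP}(\xi)$, whence $\xi$-$\mathcal{G}\mathrm{pd}\,M\leq n$ by the inductive definition of $\xi$-$\mathcal{G}$projective dimension.

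The real work is $(1)\Rightarrow(3)$. First I would fix a $\xi$-projective resolution $\pi\colon\mathbf{P}\to M$; using the properties of $\xi$-$\mathcal{G}$projective dimension from \cite{HZZ}, the hypothesis $\xi$-$\mathcal{G}\mathrm{pd}\,M\leq n$ forces the $n$-th syzygy $L$ of $\mathbf{P}$ to be $\xi$-$\mathcal{G}$projective, and it forces every $\xi$-projective resolution of $L$ (in particular the truncation $\cdots\to P_{n+1}\to P_n\to L$) to be $\mathcal{C}(-,\mathcal{P}(\xi))$-exact. Since $L\in\mathcal{GP}(\xi)$ it also admits a $\mathcal{C}(-,\mathcal{P}(\xi))$-exact coresolution $L\to Q_{-1}\to Q_{-2}\to\cdots$ by $\xi$-projectives. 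I now set $S_i=P_i$ and $\mu_i=\mathrm{id}_{P_i}$ for $i\geq n$, and the task is to extend $\mathbf{S}$ and the chain map $\mu\colon\mathbf{S}\to\mathbf{P}$ downward so that $\mathbf{S}$ becomes a complete $\xi$-projective resolution with $n$-th syzygy $L$ while every $\mu_i$ acquires a section.

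This last step is the one I expect to be the main obstacle, because a naive splice of the coresolution of $L$ below degree $n$ carries no reason to map onto the terms $P_{n-1},\dots,P_0$ of $\mathbf{P}$, let alone split onto them. The remedy is Lemma \ref{lem:3.12'}, which I would apply by descending induction on $i$ from $n-1$ to $0$: feeding it the current syzygy $\mathbb{E}$-triangle together with $D=P_i$ and a morphism $\alpha$ read off from the differentials of $\mathbf{P}$, it returns an $\mathbb{E}$-triangle in $\xi$ whose middle and outer terms gain $P_i$ as a direct summand and whose structure map $\begin{bmatrix}1&\alpha\\0&g\end{bmatrix}$ keeps that summand split; this lets me define $\mu_i$ as the projection onto $P_i$, with the inclusion as section, while the choice of $\alpha$ makes $\mu$ commute with the differentials. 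At each stage Lemma \ref{lem:3.14'} certifies that the modified maps remain $\xi$-deflations, so the enlarged complex stays $\xi$-exact, and below degree $0$ one simply continues with the plain coresolution of the resulting syzygy, where $P_i=0$ makes the section condition vacuous. The delicate part is purely the bookkeeping that these successive local enlargements assemble into a single complete $\xi$-projective resolution $\mathbf{S}$ and a single degreewise-split chain map $\mu$; this is exactly what the saturation and (co)base-change closure of the proper class $\xi$, together with the comparison Lemma \ref{lemma:comparasion-lemma}, are there to secure.
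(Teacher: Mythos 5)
Your proposal is, in substance, the paper's own argument with the implication cycle reorganized. The paper proves $(1)\Rightarrow(2)$ by splicing a complete $\xi$-projective resolution $\mathbf{Q}$ of the $n$-th $\xi$-syzygy $K_n$ (which lies in $\mathcal{GP}(\xi)$ by \cite[Proposition 5.2]{HZZ}) onto $\mathbf{P}$, the comparison maps $\nu_i\colon Q_i\to P_i$ being constructed inductively from the $\mathcal{C}(-,\mathcal{P}(\xi))$-exactness of the upper row; it then proves $(2)\Rightarrow(3)$ by exactly the correction procedure you sketch, using Lemma \ref{lem:3.12'} and Lemma \ref{lem:3.14'}, and closes with the trivial $(3)\Rightarrow(1)$. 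Your $(1)\Rightarrow(3)$ fuses the first two steps, and carried out faithfully it reproduces the paper's formulas: $S_{n-1}=P_{n-1}\oplus Q_{n-1}$ and, for $i<n-1$, $S_i=P_i\oplus P_{i+1}\oplus Q_i$ with $\mu_i=\begin{bmatrix}1&d_{i+1}^{\mathbf{P}}&\nu_i\end{bmatrix}$. Note two points your description elides: each step drags along a \emph{second} projective summand $P_{i+1}$ inherited from the previous syzygy $P_{i+1}\oplus K_{i+1}'$, so the enlargement is not just ``gains $P_i$''; and the section of $\mu_i$ is obtained because $\mu_i$ is a $\xi$-deflation onto the $\xi$-projective object $P_i$ (so it splits), not because $\mu_i$ is set up as a literal projection. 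Also, the maps $\alpha$ fed into Lemma \ref{lem:3.12'} are the comparison maps $\nu_i$ (combined with differentials of $\mathbf{P}$), and these are supplied by the $\mathcal{C}(-,\mathcal{P}(\xi))$-exactness of the complete resolution of $K_n$ together with \cite[Lemma 4.10(2)]{HZZ}; Lemma \ref{lemma:comparasion-lemma} compares augmented resolutions and gives you nothing on the unbounded-below part.

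The one place your route genuinely departs from the paper is $(2)\Rightarrow(1)$ (the paper instead has the trivial $(3)\Rightarrow(1)$), and there your justification contains a real imprecision: in an extriangulated category the syzygies of a $\xi$-exact complex are \emph{not} ``determined up to isomorphism by the terms and differentials'', because the factorizations $d_i=g_{i-1}f_i$ through $\mathbb{E}$-triangles are part of the structure of the complex, and inflations (resp.\ deflations) need not be monomorphisms (resp.\ epimorphisms) --- the triangulated case makes this vivid. What is available, and what the paper itself tacitly uses when it writes the identity on $K_n$ in the first diagram of its $(2)\Rightarrow(3)$, is that the isomorphisms $\nu_i$ for $i\geq n$ let you transport the $\mathbb{E}$-triangles of $\mathbf{T}$ in degrees $\geq n$ so that they realize the differentials of $\mathbf{P}$ there, exhibiting the $\xi$-$\mathcal{G}$projective object $K_n$ as an $n$-th $\xi$-syzygy of $M$; one then concludes via the independence of the choice of resolution in \cite[Proposition 5.2]{HZZ}. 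With that repair your cycle closes, and your argument reaches the same level of rigor as the paper's.
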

\begin{proof} $(1)\Rightarrow(2)$. By hypothesis, we set $\xi$-$\mathcal{G}$${\rm pd} M\leq n$ and consider a $\xi$-projective resolution $\pi:\mathbf{P}\ra M$, where
$$\mathbf{P}:\cdots \ra P_n \ra \cdots\ra P_{1}\ra P_{0}\ra 0.$$
For any integer $i\in{\mathbb{Z}}$, there are $\mathbb{E}$-triangles $\xymatrix@C=2em{K_{i}\ar[r]&P_i\ar[r]&K_{i-1}\ar@{-->}[r]&}$ in $\xi$, where $K_i$ is called an $i$th $\xi$-syzygy of $M$. It follows from \cite[Proposition 5.2]{HZZ} that the $n$th $\xi$-syzygy $K_n$ is in ${\mathcal{GP}}(\xi)$. So there exists a  complete $\xi$-projective resolution of $K_n$
 $$\xymatrix@C=2em{\mathbf{Q}:\cdots\ar[r]&Q_1\ar[r]^{d_1}&Q_0\ar[r]^{d_0}&Q_{-1}\ar[r]&\cdots}$$
 with $\xymatrix@C=2em{K_{n}\ar[r]&Q_{n-1}\ar[r]&K'_{n-1}\ar@{-->}[r]&}$ in $\xi$. Then the $\xi$-projective resolution $\mathbf{P}$ of
$M$ and the  complete $\xi$-projective resolution $\mathbf{Q}$ of $K_n$ can be put together in a commutative
diagram
$$\xymatrix@C=2em{\mathbf{T}:=&\cdots\ar[r]&P_{n}\ar@{=}[d]\ar[r]&Q_{n-1}\ar[r]\ar[d]^{\nu_{n-1}}&\cdots\ar[r]&Q_{1}\ar[r]\ar[d]^{\nu_{1}}&
Q_{0}\ar[d]^{\nu_{0}}\ar[r]&Q_{-1}\ar[d]^{\nu_{-1}}\ar[r]&\cdots\\
\mathbf{P}:=&\cdots\ar[r]&P_{n}\ar[r]&P_{n-1}\ar[r]&\cdots\ar[r]&P_{1}\ar[r]&
P_{0}\ar[r]&0\ar[r]&\cdots.}$$
Since the upper row is $\mathcal{C}(-, \mathcal{P}(\xi))$-exact, the vertical maps can be constructed inductively,
starting from $K_n$. So $\xymatrix@C=2em{\mathbf{T}\ar[r]^{\nu}&\mathbf{P}\ar[r]^{\pi}&M}$ is a $\xi$-complete resolution of $M$ such that $\nu_{i}$ is an isomorphism for all $i\geq n$.

$(2)\Rightarrow(3)$. Note that $M$ has a $\xi$-complete resolution $\xymatrix@C=2em{\mathbf{T}\ar[r]^{\nu}&\mathbf{P}\ar[r]^{\pi}&M}$ such that $\nu_{i}$ is an isomorphism for each $i\geq n$ by (2). Then we have the following morphism  of $\mathbb{E}$-triangles in $\xi$
$$\xymatrix{K_n\ar@{=}[d]\ar[r]^{g_{n-1}'}&Q_{n-1}\ar[r]^{f_{n-1}'}\ar[d]^{\nu_{n-1}}&K_{n-1}'\ar[d]^{\omega_{n-1}}\ar@{-->}[r]^{\rho_{n-1}}&\\
K_n\ar[r]^{g_{n-1}}&P_{n-1}\ar[r]^{f_{n-1}}&K_{n-1}\ar@{-->}[r]^{\delta_{n-1}}&}$$
Moreover, for any integer $i<n$, we have the following morphism of $\mathbb{E}$-triangles in $\xi$
$$\xymatrix{K_{i}'\ar[d]^{\omega_{i}}\ar[r]^{g_{i-1}'}&Q_{i-1}\ar[r]^{f_{n-1}'}\ar[d]^{\nu_{i-1}}&K_{n-1}'\ar[d]^{\omega_{i-1}}\ar@{-->}[r]^{\rho_{i-1}}&\\
K_i\ar[r]^{g_{i-1}}&P_{i-1}\ar[r]^{f_{i-1}}&K_{i-1}\ar@{-->}[r]^{\delta_{i-1}}&}$$
By Lemma \ref{lem:3.12'}, there is an $\mathbb{E}$-triangle in $\xi$
$$\xymatrix{K_n\ar[r]^{\tiny\begin{bmatrix}-g_{n-1}\\g_{n-1}'\end{bmatrix}\ \ \ \ \ \ \ }&P_{n-1}\oplus Q_{n-1}\ar[r]^{\tiny \begin{bmatrix}1&\nu_{n-1}\\0&f_{n-1}'\end{bmatrix}\ \ \ }&P_{n-1}\oplus K_{n-1}'\ar@{-->}[r]^{\ \ \ \ \ \ \ \ \ \ \ \tiny\begin{bmatrix}0&1\end{bmatrix}^*\rho_{n-1}}&}$$
 Since the morphism $\tiny\begin{bmatrix}1&0\end{bmatrix}:P_{n-1}\oplus K_{n-1}'\rightarrow P_{n-1}$ is a split epimorphism, and  it is a $\xi$-deflation. Hence ${\tiny\begin{bmatrix}1&\nu_{n-1}\end{bmatrix}=\begin{bmatrix}1&0\end{bmatrix}\begin{bmatrix}1&\nu_{n-1}\\0&f_{n-1}'\end{bmatrix}}: P_{n-1}\oplus Q_{n-1}\rightarrow P_{n-1}$ is a $\xi$-deflation by \cite[Corollary 3.5]{HZZ}. It follows from \cite[Lemma 3.7(2)]{HZZ} that
 $\xymatrix{K_n\ar[r]^{\tiny\begin{bmatrix}0\\g_{n-1}'\end{bmatrix}\ \ \ \ \ \ \ }&P_{n-1}\oplus Q_{n-1}\ar[r]^{\tiny \begin{bmatrix}1&0\\0&f_{n-1}'\end{bmatrix}\ \ \ }&P_{n-1}\oplus K_{n-1}'\ar@{-->}[r]^{\ \ \ \ \ \ \ \ \ \ \ \tiny\begin{bmatrix}0&1\end{bmatrix}^*\rho_{n-1}}&}$ is an $\mathbb{E}$-triangle in $\xi$.
 Note that ${\tiny\begin{bmatrix}0\\1\end{bmatrix}^*\begin{bmatrix}f_{n-1}&\omega_{n-1}\end{bmatrix}}^*\delta_{n-1}=({\tiny\begin{bmatrix}f_{n-1}&\omega_{n-1}\end{bmatrix}
\begin{bmatrix}0\\1\end{bmatrix}})^*\delta_{n-1}=\omega_{n-1}^*\delta_{n-1}=\rho_{n-1}={\tiny\begin{bmatrix}0\\1\end{bmatrix}}^*
{\tiny\begin{bmatrix}0&1\end{bmatrix}}^*\rho_{n-1}$ and ${\tiny\begin{bmatrix}1\\0\end{bmatrix}^*\begin{bmatrix}f_{n-1}&\omega_{n-1}\end{bmatrix}}^*\delta_{n-1}=({\tiny\begin{bmatrix}f_{n-1}&\omega_{n-1}\end{bmatrix}
\begin{bmatrix}1\\0\end{bmatrix}})^*\delta_{n-1}=f_{n-1}^*\delta_{n-1}=0={\tiny\begin{bmatrix}1\\0\end{bmatrix}}^*
{\tiny\begin{bmatrix}0&1\end{bmatrix}}^*\rho_{n-1}$, which implies that ${\tiny\begin{bmatrix}f_{n-1}&\omega_{n-1}\end{bmatrix}^*\delta_{n-1}=\begin{bmatrix}0&1\end{bmatrix}^*\rho_{n-1}}$. Hence we have the following morphism of $\mathbb{E}$-triangles in $\xi$
$$\xymatrix{K_n\ar@{=}[d]\ar[r]^{\tiny\begin{bmatrix}0\\g_{n-1}'\end{bmatrix}\ \ \ \ \ \ \ }&P_{n-1}\oplus Q_{n-1}\ar[d]^{\tiny\begin{bmatrix}1&\nu_{n-1}\end{bmatrix}}\ar[r]^{\tiny \begin{bmatrix}1&0\\0&f_{n-1}'\end{bmatrix}\ \ \ }&P_{n-1}\oplus K_{n-1}'\ar[d]^{\tiny\begin{bmatrix}f_{n-1}&\omega_{n-1}\end{bmatrix}}\ar@{-->}[r]^{\ \ \ \ \ \ \ \ \ \ \ \tiny\begin{bmatrix}0&1\end{bmatrix}^*\rho_{n-1}}&\\
K_n\ar[r]^{g_{n-1}}&P_{n-1}\ar[r]^{f_{n-1}}&K_{n-1}\ar@{-->}[r]^{\delta_{n-1}}&
}$$
Since $K_{n-1}'$ is $\xi$-$\mathcal{G}$projective, the upper row is  $\mathcal{C}(-,\mathcal{P}(\xi))$-exact by \cite[Lemma 4.10(2)]{HZZ}. Since $${\tiny\begin{bmatrix}f_{n-1}&\omega_{n-1}\end{bmatrix}\begin{bmatrix}1&0\\0&f_{n-1}'\end{bmatrix}=f_{n-1}\begin{bmatrix}1&\nu_{n-1}\end{bmatrix}}$$ is a $\xi$-deflation, it follows from Lemma \ref{lem:3.14'} that the morphism ${\tiny\begin{bmatrix}f_{n-1}&\omega_{n-1}\end{bmatrix}}$ is also a $\xi$-deflation. Dual to \cite[Lemma 3.7(2)]{HZZ}, there exists an $\mathbb{E}$-triangle
$$\xymatrix{P_{n-1}\oplus K_{n-1}'\ar[r]^{\tiny\begin{bmatrix}1&0\\0&g_{n-2}'\end{bmatrix}\ \ \ }&P_{n-1}\oplus Q_{n-2}\ar[r]^{\tiny \ \ \ \ \begin{bmatrix}0&f_{n-2}'\end{bmatrix}}& K_{n-2}'\ar@{-->}[r]^{\ \  \tiny\begin{bmatrix}0\\1\end{bmatrix}_*\rho_{n-2}}&,}$$
which is also in $\xi$ because $\xi$ is closed under cobase change. Since ${\tiny\begin{bmatrix}f_{n-1}&\omega_{n-1}\end{bmatrix}_*\begin{bmatrix}0\\1\end{bmatrix}}_*\rho_{n-2}=({\tiny\begin{bmatrix}f_{n-1}&\omega_{n-1}\end{bmatrix}
\begin{bmatrix}0\\1\end{bmatrix}})_*\rho_{n-2}=\omega_{n-1*}\rho_{n-1}=\omega_{n-2}^*\delta_{n-2}$, we have the following morphism of $\mathbb{E}$-triangles in $\xi$
$$\xymatrix@C=4em{P_{n-1}\oplus K_{n-1}'\ar[d]_{\tiny\begin{bmatrix}f_{n-1}&\omega_{n-1}\end{bmatrix}\ \ \ }\ar[r]^{\tiny\begin{bmatrix}1&0\\0&g_{n-2}'\end{bmatrix}}
&P_{n-1}\oplus Q_{n-2}\ar[r]^{\ \ \tiny\begin{bmatrix}0&f_{n-2}'\end{bmatrix}}\ar[d]^{\tiny\begin{bmatrix}d_{n-1}^{\mathbf{P}}&\nu_{n-2}\end{bmatrix}}
&K_{n-2}'\ar[d]^{\omega_{n-2}}\ar@{-->}[r]^{\ \ \tiny\begin{bmatrix}0\\1\end{bmatrix}_*\rho_{n-2}}&\\
K_{n-1}\ar[r]^{g_{n-2}}&P_{n-2}\ar[r]^{f_{n-2}}&K_{n-2}\ar@{-->}[r]^{\delta_{n-2}}&}$$
Similarly, we have the following morphism of $\mathbb{E}$-triangles in $\xi$
$$\xymatrix@C=4em{P_{n-1}\oplus K_{n-1}'\ar[d]_{\tiny\begin{bmatrix}f_{n-1}&\omega_{n-1}\end{bmatrix}\ \ \ }\ar[r]^{\tiny\begin{bmatrix}0&0\\1&0\\0&g_{n-2}'\end{bmatrix}\ \ \ \ \ \ }
&P_{n-2}\oplus P_{n-1}\oplus Q_{n-2}\ar[r]^{\ \ \tiny\begin{bmatrix}1&0&0\\0&0&f_{n-2}'\end{bmatrix}}\ar[d]^{\tiny\begin{bmatrix}1&d_{n-1}^{\mathbf{P}}&\nu_{n-2}\end{bmatrix}}
&P_{n-2}\oplus K_{n-2}'\ar[d]^{\tiny\begin{bmatrix}f_{n-2}&\omega_{n-2}\end{bmatrix}}\ar@{-->}[r]^{\ \ \ \ \ \tiny\begin{bmatrix}0&1\end{bmatrix}^*\begin{bmatrix}0\\1\end{bmatrix}_*\rho_{n-2}}&\\
K_{n-1}\ar[r]^{g_{n-2}}&P_{n-2}\ar[r]^{f_{n-2}}&K_{n-2}\ar@{-->}[r]^{\delta_{n-2}}&}$$
where the morphisms ${\begin{bmatrix}1&d_{n-1}^{\mathbf{P}}&\nu_{n-2}\end{bmatrix}}$ and ${\begin{bmatrix}f_{n-2}&\omega_{n-2}\end{bmatrix}}$ are $\xi$-deflations and the upper row is a $\mathcal{C}(-,\mathcal{P}(\xi))$-exact $\mathbb{E}$-triangle in $\xi$. By proceeding in this manner, we set
$$S_i=\left\{
\begin{array}{cc}
P_i& i\geq n\\
P_{n-1}\oplus Q_{n-1}& i=n-1\\
P_i\oplus P_{i+1}\oplus Q_i& i<n-1
\end{array}\right.
$$
$$\mu_i=\left\{
\begin{array}{cc}
1& i\geq n\\
{\tiny\begin{bmatrix}1&\nu_{n-1}\end{bmatrix}}& i=n-1\\
{\tiny\begin{bmatrix}1&d_{i+1}^{\mathbf{P}}&\nu_{i}\end{bmatrix}}&0\leq i<n-1\\
0&i<0
\end{array}\right.
$$
Consequently, we get a commutative diagram
$$\xymatrix@C=2em{\mathbf{S}:=&\cdots\ar[r]&P_{n}\ar@{=}[d]\ar[r]&S_{n-1}\ar[r]\ar[d]^{\mu_{n-1}}&\cdots\ar[r]&S_{1}\ar[r]\ar[d]^{\mu_{1}}&
S_{0}\ar[d]^{\mu_{0}}\ar[r]&S_{-1}\ar[d]^{\mu_{-1}}\ar[r]&\cdots\\
\mathbf{P}:=&\cdots\ar[r]&P_{n}\ar[r]&P_{n-1}\ar[r]&\cdots\ar[r]&P_{1}\ar[r]&
P_{0}\ar[r]&0\ar[r]&\cdots.}$$
Note that every $S_i$ is $\xi$-projective, and $\mathbf{S}$ is obtained by pasting together those $\mathbb{E}$-triangles $$\xymatrix{K_n\ar[r]&P_{n-1}\oplus Q_{n-1}\ar[r]&P_{n-1}\oplus K_{n-1}'\ar@{-->}[r]&}$$ and $$\xymatrix{P_i\oplus K_i'\ar[r]&P_{i-1}\oplus P_i\oplus Q_{i-1}\ar[r]&P_{i-1}\oplus K_{i-1}'\ar@{-->}[r]&}$$ for all $i<n$, then the complex $\mathbf{S}$ is $\xi$-exact and $\mathcal{C}(-,\mathcal{P}(\xi))$-exact. Hence we get the desired $\xi$-complete resolution $\xymatrix{\mathbf{S}\ar[r]^{\mu}&\mathbf{P}\ar[r]^\pi&M}$ such that $\mu_i$ is a $\xi$-deflation for every $i\in \mathbb{Z}$. So  each $\mu_i$ has a section and $\xymatrix@C=2em{\mathbf{S}\ar[r]^{\mu}&\mathbf{P}\ar[r]^{\pi}&M}$ is a split $\xi$-complete resolution such that $\mu_{i}$ is an isomorphism for each $i\geq n$, as desired.

$(3)\Rightarrow(1)$ is trivial.
\end{proof}

As a similar argument to that of \cite[Lemma 3.2]{AS2}, we have the following result.

\begin{lem}\label{lem:4.4}
Let $\xymatrix@C=2em{\mathbf{T}\ar[r]^{\nu}&\mathbf{P}\ar[r]^{\pi}&M}$ and $\xymatrix@C=2em{\mathbf{T}'\ar[r]^{\nu'}&\mathbf{P}'\ar[r]^{\pi'}&M'}$ be  $\xi$-complete resolutions of $M$ and $M'$, respectively. For each morphism $\mu: M\to M'$, there exists a morphism $\overline{\mu}$, unique up to homotopy, making the right-hand square of the following diagram
$$
\xymatrix{\mathbf{T}\ar[r]^{\nu}\ar[d]^{\widetilde{\mu}}&\mathbf{P}\ar[r]^{\pi}\ar[d]^{\overline{\mu}}&M\ar[d]^{\mu}\\
\mathbf{T}'\ar[r]^{\nu'}&\mathbf{P}'\ar[r]^{\pi'}&M'}
$$
commutative, and for each choice of $\overline{\mu}$, there exists a morphism $\widetilde{\mu}$, unique up to homotopy, making the left-hand square commute up to homotopy. In particular, if \emph{$\mu=\textrm{id}_{M}$}, then $\widetilde {\mu}$ and $\overline{\mu}$ are homotopy equivalences.
\end{lem}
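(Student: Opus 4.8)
The plan is to handle the two squares separately: the right-hand square by the ordinary comparison theorem, and then to propagate the lift into the negative degrees of the complete resolutions by a dimension-shifting argument. The two enabling facts throughout are that every term of $\mathbf{T}$ is $\xi$-projective --- so that $\mathcal{C}(T_i,-)$ carries $\mathbb{E}$-triangles in $\xi$ to short exact sequences --- and that $\mathbf{T}'$ is $\xi$-exact, being a complete $\xi$-projective resolution. First I would produce $\overline{\mu}$: since $\pi\colon\mathbf{P}\to M$ and $\pi'\colon\mathbf{P}'\to M'$ are $\xi$-projective resolutions and $\mu\colon M\to M'$ is given, Lemma \ref{lemma:comparasion-lemma}(1) furnishes a chain map $\overline{\mu}\colon\mathbf{P}\to\mathbf{P}'$, unique up to homotopy, with $\pi'\overline{\mu}=\mu\pi$; this is exactly the assertion about the right-hand square.

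Fixing such an $\overline{\mu}$, I would build $\widetilde{\mu}$. By Definition \ref{df:3.2}, $\nu_i$ and $\nu_i'$ are identities for $i\gg 0$, so in all sufficiently high degrees $\mathbf{T}$ agrees with $\mathbf{P}$ and $\mathbf{T}'$ with $\mathbf{P}'$; there I set $\widetilde{\mu}_i=\overline{\mu}_i$, making the left-hand square commute strictly in the stable range. Descending from this anchor, once $\widetilde{\mu}_j$ is known for $j\ge i$, the morphism $d_i'\widetilde{\mu}_i$ factors through the $i$th syzygy of $\mathbf{T}'$, and $\xi$-projectivity of $T_{i-1}$ together with $\xi$-exactness of $\mathbf{T}'$ lets me lift it to $\widetilde{\mu}_{i-1}\colon T_{i-1}\to T_{i-1}'$ commuting with the differentials. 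That the left-hand square then commutes up to homotopy follows because $\nu'\widetilde{\mu}$ and $\overline{\mu}\nu$ are chain maps $\mathbf{T}\to\mathbf{P}'$ agreeing in high degrees, so their difference vanishes there; since $\mathbf{P}'$ is bounded below with $\xi$-projective terms, the same lifting mechanism produces homotopy morphisms $s_i\colon T_i\to P_{i+1}'$ by descending induction with $s_i=0$ for $i\gg 0$, assembling a null-homotopy.

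For uniqueness of $\widetilde{\mu}$ up to homotopy I would apply this mechanism to a difference: if $\widetilde{\mu}$ and $\widetilde{\mu}''$ are two lifts then $\sigma=\widetilde{\mu}-\widetilde{\mu}''$ satisfies $\nu'\sigma\simeq 0$, and since $\nu'$ is the identity in high degrees the homotopy exhibits $\sigma$ as null-homotopic there; descending again through the $\xi$-projective terms of $\mathbf{T}$ and the $\xi$-exact complex $\mathbf{T}'$ extends this to a full null-homotopy, so $\sigma\simeq 0$. Finally, when $\mu=\mathrm{id}_M$, $\overline{\mu}$ is a homotopy equivalence by Lemma \ref{lemma:comparasion-lemma}(1); choosing lifts $\overline{\nu}$ and $\widetilde{\nu}$ of $\mathrm{id}_M$ in the opposite direction, the composites $\widetilde{\nu}\widetilde{\mu}$ and $\mathrm{id}_{\mathbf{T}}$ are both lifts compatible with $\overline{\nu}\,\overline{\mu}\simeq\mathrm{id}_{\mathbf{P}}$, so the uniqueness just proved yields $\widetilde{\nu}\widetilde{\mu}\simeq\mathrm{id}_{\mathbf{T}}$ and dually $\widetilde{\mu}\widetilde{\nu}\simeq\mathrm{id}_{\mathbf{T}'}$, making $\widetilde{\mu}$ a homotopy equivalence.

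I expect the real difficulty to lie in $\widetilde{\mu}$ rather than in $\overline{\mu}$: because the complete resolutions are unbounded below there is no terminal degree from which to run an ordinary induction, so one must anchor every lift and every homotopy in the stable high degrees where $\nu,\nu'$ are identities and then descend indefinitely, invoking at each stage the $\xi$-projectivity of the source terms and the $\xi$-exactness of $\mathbf{T}'$ to keep both the chain-map components and the connecting homotopies in existence across infinitely many degrees.
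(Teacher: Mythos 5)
Your overall architecture is the right one (and matches the argument of \cite[Lemma 3.2]{AS2}, to which the paper defers): get $\overline{\mu}$ from Lemma \ref{lemma:comparasion-lemma}(1), anchor $\widetilde{\mu}_i=\overline{\mu}_i$ in the stable range where $\nu_i$ and $\nu'_i$ are identities, and descend through the negative degrees. But the mechanism you invoke for the descending step is wrong, and the error traces back to your opening sentence, where you list the ``two enabling facts'' as $\xi$-projectivity of the terms of $\mathbf{T}$ and $\xi$-exactness of $\mathbf{T}'$. Neither of these is the property that makes the step work. Suppose $\widetilde{\mu}_j$ is constructed for $j\geq i$; the datum that passes to degree $i-1$ is a morphism on the syzygy, $\kappa_i\colon K_i\to K'_i$, and composing with the inflation $g'_{i-1}\colon K'_i\to T'_{i-1}$ gives $g'_{i-1}\kappa_i\colon K_i\to T'_{i-1}$. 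To produce $\widetilde{\mu}_{i-1}\colon T_{i-1}\to T'_{i-1}$ you must \emph{extend} this morphism along the inflation $g_{i-1}\colon K_i\to T_{i-1}$, and that requires the $\mathbb{E}$-triangle $K_i\to T_{i-1}\to K_{i-1}$ of $\mathbf{T}$ to be $\mathcal{C}(-,\mathcal{P}(\xi))$-exact, i.e.\ it requires the \emph{complete} $\mathcal{P}(\xi)$-exactness (total acyclicity) of the source complex $\mathbf{T}$ --- precisely the condition built into the definition of a complete $\xi$-projective resolution, and one you never use anywhere in the proposal. By contrast, ``$\xi$-projectivity of $T_{i-1}$ together with $\xi$-exactness of $\mathbf{T}'$'' would let you lift a morphism $T_{i-1}\to K'_{i-1}$ along the deflation $f'_{i-1}\colon T'_{i-1}\to K'_{i-1}$; but at that stage no morphism out of $T_{i-1}$ exists yet, so there is nothing to lift. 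Without total acyclicity of $\mathbf{T}$ the lemma is simply false ($\xi$-exact complexes of $\xi$-projectives do not admit such comparison maps in general), so this is a genuine gap, not a slip of wording; the same misattribution recurs in your null-homotopy construction for $\nu'\widetilde{\mu}-\overline{\mu}\nu$ and in your uniqueness argument, both of which again need the $\mathcal{C}(-,\mathcal{P}(\xi))$-exactness of $\mathbf{T}$ to extend the homotopy components $s_{i-1}\colon T_{i-1}\to P'_i$ (resp.\ $T_{i-1}\to T'_i$) across inflations in descending induction.

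A secondary point specific to the extriangulated setting: your phrase ``$d'_i\widetilde{\mu}_i$ factors through the $i$th syzygy of $\mathbf{T}'$'' hides a step that needs care, because deflations in $(\mathcal{C},\mathbb{E},\mathfrak{s})$ need not be epimorphisms. One cannot deduce $f'_i\widetilde{\mu}_i g_i=0$ from $f'_i\widetilde{\mu}_i g_i f_{i+1}=0$ by cancelling the deflation $f_{i+1}$. The standard remedy is to strengthen the induction hypothesis: carry the syzygy morphisms $\kappa_i\colon K_i\to K'_i$ satisfying $\kappa_i f_i=f'_i\widetilde{\mu}_i$ as part of the data, producing $\kappa_{i-1}$ at each stage from the weak cokernel property of the deflation $f_{i-1}$ (using $f'_{i-1}\widetilde{\mu}_{i-1}g_{i-1}=f'_{i-1}g'_{i-1}\kappa_i=0$). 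With these two repairs --- replacing your lifting mechanism by extension along inflations via the complete $\mathcal{P}(\xi)$-exactness of $\mathbf{T}$, and threading the $\kappa_i$ through the induction --- your outline becomes the intended proof.
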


%By the proof of $(2)\Rightarrow(3)$ in Proposition \ref{prop:3.15}, we have the following result.
%
%\begin{cor}\label{lem:costruct-of-homotopy} Let $\xymatrix@C=2em{\mathbf{T}\ar[r]^{\nu}&\mathbf{P}\ar[r]^{\pi}&M}$ be a $\xi$-complete resolution of $M$ with $\nu_{i}$ being an isomorphism for each $i\geq n$. Then there exist a split $\xi$-complete resolution $\xymatrix@C=2em{\mathbf{S}\ar[r]^{\mu}&\mathbf{P}\ar[r]^{\pi}&M}$ of $M$ and a homotopy equivalence $\alpha:\mathbf{T}\ra \mathbf{S}$ such that $\nu=\mu \alpha$ and {\rm$\alpha_{i}=\textrm{id}_{\mathbf{T}_{i}}$} for all $i\geq n$.
%\end{cor}

We are now in a position to prove the following result, which gives a general technique for computing $\xi$-complete cohomology of objects with finite $\xi$-$\mathcal{G}$projective dimension.

\begin{thm}\label{thm2} Let $M$ and $N$ be objects in $\mathcal{C}$. If $\xymatrix@C=2em{\mathbf{T}\ar[r]^{\nu}&\mathbf{P}\ar[r]^{\pi}&M}$ is a $\xi$-complete resolution of $M$, then for any integer $n$, there exists  an isomorphism
$$\widetilde{\xi{\rm xt}}_{\mathcal{P}}^n(M,N)\cong H^n(\mathcal{C}(\mathbf{T},N)).$$
\end{thm}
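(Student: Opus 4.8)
The plan is to compare the complex $\widetilde{\mathcal{C}}(\mathbf{P}_M,\mathbf{P}_N)$ used to define $\widetilde{\xi\mathrm{xt}}_{\mathcal{P}}^n(M,N)$ with the complex $\mathcal{C}(\mathbf{T},N)$ appearing in the statement, by factoring the identification through an intermediate complex built from the given $\xi$-complete resolution. First I would fix the data: a $\xi$-complete resolution $\mathbf{T}\xrightarrow{\nu}\mathbf{P}\xrightarrow{\pi}M$ of $M$ (so $\mathbf{P}$ is a $\xi$-projective resolution of $M$ that we may use to compute $\widetilde{\xi\mathrm{xt}}_{\mathcal{P}}$, since by Remark following Definition \ref{df:3.6} the groups are independent of the choice of resolution) and a $\xi$-projective resolution $\mathbf{P}_N\xrightarrow{} N$. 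By definition $\widetilde{\xi\mathrm{xt}}_{\mathcal{P}}^n(M,N)=H^n(\widetilde{\mathcal{C}}(\mathbf{P},\mathbf{P}_N))$, where $\widetilde{\mathcal{C}}(\mathbf{P},\mathbf{P}_N)=\mathcal{C}(\mathbf{P},\mathbf{P}_N)/\overline{\mathcal{C}}(\mathbf{P},\mathbf{P}_N)$.

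The key reduction is to replace the target resolution $\mathbf{P}_N$ by $N$ itself in the outer (complete) slot. The natural thing is to exhibit a short exact sequence of complexes of abelian groups
$$
0\to \mathcal{C}(\mathbf{T},\mathbf{P}_N)_{\mathrm{b}}\to \mathcal{C}(\mathbf{T},\mathbf{P}_N)\to \widetilde{\mathcal{C}}(\mathbf{P},\mathbf{P}_N)\to 0,
$$
or a variant of it, and then identify the relevant cohomology. More precisely, I would proceed in two steps. Step one: show that applying $\mathcal{C}(\mathbf{T},-)$ to the $\xi$-projective resolution $\mathbf{P}_N\to N$ computes $\mathcal{C}(\mathbf{T},N)$ up to quasi-isomorphism, i.e. that $\mathcal{C}(\mathbf{T},\mathbf{P}_N)\to \mathcal{C}(\mathbf{T},N)$ is a quasi-isomorphism; this uses that $\mathbf{T}$ consists of $\xi$-projectives and is $\mathcal{C}(-,\mathcal{P}(\xi))$-exact, exactly as in the proof of Lemma \ref{lem:3.8}, where $\mathcal{C}(\mathbf{T},\mathrm{Cone}(\mathbf{P}_N\to N))$ is exact. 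Step two: analyze the morphism $\nu:\mathbf{T}\to\mathbf{P}$, which is an isomorphism in all sufficiently high degrees (by Definition \ref{df:3.2}(3)); the induced map $\nu^{*}:\mathcal{C}(\mathbf{P},\mathbf{P}_N)\to \mathcal{C}(\mathbf{T},\mathbf{P}_N)$ together with the bounded-above subcomplex $\overline{\mathcal{C}}$ should yield that $\widetilde{\mathcal{C}}(\mathbf{P},\mathbf{P}_N)$ and $\mathcal{C}(\mathbf{T},\mathbf{P}_N)$ have the same cohomology in every degree.

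Concretely, because $\nu_i=\mathrm{id}$ for $i\gg 0$ and $\mathbf{P}$ is bounded below with $P_i$ eventually the syzygy tail, the cone of $\nu$ is a bounded complex, so $\nu^{*}$ induces an isomorphism on the quotients modulo bounded-above homomorphisms; passing to the quotient $\widetilde{\mathcal{C}}$ kills precisely the bounded-above part on which $\nu$ and the identity differ. I would assemble these into
$$
H^n\big(\widetilde{\mathcal{C}}(\mathbf{P},\mathbf{P}_N)\big)\cong H^n\big(\mathcal{C}(\mathbf{T},\mathbf{P}_N)\big)\cong H^n\big(\mathcal{C}(\mathbf{T},N)\big),
$$
the first isomorphism from the degree analysis of $\nu$ and the second from Step one. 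Combining gives the claimed $\widetilde{\xi\mathrm{xt}}_{\mathcal{P}}^n(M,N)\cong H^n(\mathcal{C}(\mathbf{T},N))$.

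The main obstacle I anticipate is Step two: making precise that quotienting by the bounded-above homomorphisms $\overline{\mathcal{C}}(\mathbf{P},\mathbf{P}_N)$ converts the map $\mathbf{P}\to M$ data into the full two-sided complete resolution $\mathbf{T}$. One must verify carefully that the difference between $\mathbf{T}$ and $\mathbf{P}$ (encoded by $\nu$ being an isomorphism only in high degrees, and $\mathbf{P}$ being acyclic except in degree $-1$ where it augments to $M$) contributes only bounded-above homomorphisms, so that it vanishes in $\widetilde{\mathcal{C}}$; this is where the definition of $\overline{\mathcal{C}}$ and the exactness properties of the complete resolution must be used in tandem, and where the comparison Lemmas \ref{lem:4.4} and \ref{lemma:comparasion-lemma} guarantee well-definedness and independence of choices. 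The remaining verifications—that all maps are chain maps and that the identifications are functorial in $N$—are routine once the two isomorphisms above are established.
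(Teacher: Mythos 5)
Your skeleton --- $H^n(\widetilde{\mathcal{C}}(\mathbf{P},\mathbf{P}_N))\cong H^n(\mathcal{C}(\mathbf{T},\mathbf{P}_N))\cong H^n(\mathcal{C}(\mathbf{T},N))$ --- is the same as the paper's, and your Step one is exactly the paper's final step (the quasi-isomorphism $\mathcal{C}(\mathbf{S},\rho)$, proved as in Lemma \ref{lem:3.8}(1)). But your justification of Step two contains a false claim: the cone of $\nu$ is \emph{not} a bounded complex. The complex $\mathbf{T}$ is unbounded on the right (it continues through all degrees $i<0$, where $P_i=0$), so $\mathrm{Cone}(\nu)_i=T_{i-1}\oplus P_i$ is nonzero for all $i\ll 0$; it is also nonzero for $i\gg 0$, merely contractible there. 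This particular defect is repairable, and more cheaply than you suggest: since a class modulo bounded-above homomorphisms is determined by its components in degrees $i\gg 0$, where $\nu_i=\mathrm{id}$, the map $\nu^{*}$ induces an isomorphism of complexes $\widetilde{\mathcal{C}}(\mathbf{P},\mathbf{P}_N)\cong\widetilde{\mathcal{C}}(\mathbf{T},\mathbf{P}_N)$ by a direct componentwise check. Note that this repaired comparison would actually bypass the paper's route, which first invokes Proposition \ref{prop:3.15} and Lemma \ref{lem:4.4} to replace $\mathbf{T}$ by a \emph{split} complete resolution $\mathbf{S}$, precisely so that the degreewise split sequence $\mathbf{X}\to\mathbf{S}\to\mathbf{P}$ gives a $3\times 3$ diagram with $\widetilde{\mathcal{C}}(\mathbf{X},\mathbf{Q})=0$.

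The genuine gap is elsewhere: your Step two asserts an isomorphism between cohomology of the \emph{quotient} $\widetilde{\mathcal{C}}(\mathbf{P},\mathbf{P}_N)$ and of the \emph{full} complex $\mathcal{C}(\mathbf{T},\mathbf{P}_N)$, but the comparison along $\nu$ only yields quotient-versus-quotient. To remove the tilde on the $\mathbf{T}$-side one must prove that the subcomplex $\overline{\mathcal{C}}(\mathbf{T},\mathbf{P}_N)$ of bounded-above homomorphisms is exact, so that $\mathcal{C}(\mathbf{T},\mathbf{P}_N)\to\widetilde{\mathcal{C}}(\mathbf{T},\mathbf{P}_N)$ is a quasi-isomorphism. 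This is the technical heart of the theorem and the only point where the \emph{completeness} of $\mathbf{T}$ --- its $\mathcal{C}(-,\mathcal{P}(\xi))$-exactness --- enters; for a merely $\xi$-exact unbounded complex of $\xi$-projectives the statement, and hence the theorem, fails. The paper secures it by applying \cite[Proposition A.2(b)]{CCLP} to $\mathbf{S}$, using that $\mathcal{C}(\mathbf{S},Q)$ is exact for every $Q\in\mathcal{P}(\xi)$. Your proposal only gestures at this (``the exactness properties of the complete resolution must be used in tandem'') without identifying or proving the needed statement, and your ``main obstacle'' paragraph locates the difficulty in the wrong place: the $\mathbf{T}$-versus-$\mathbf{P}$ comparison is the easy part. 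Once you insert the exactness of $\overline{\mathcal{C}}(\mathbf{T},\mathbf{P}_N)$ (by the filtration argument of \cite{CCLP}, valid here verbatim for $\mathbf{T}$), your route closes and is in fact a mild streamlining of the paper's proof.
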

\begin{proof} By Proposition \ref{prop:3.15}, there exists a split $\xi$-complete resolution $\xymatrix@C=2em{\mathbf{S}\ar[r]^{\mu}&\mathbf{P}\ar[r]^{\pi}&M}$ of $M$ such that $\mu_{i}$ is an isomorphism for each $i\geq m$. Thus $\mathbf{T}$ and $\mathbf{S}$ are homotopy equivalences by Lemma \ref{lem:4.4}, and hence  $H^n(\mathcal{C}(\mathbf{S},N)\cong H^n(\mathcal{C}(\mathbf{T},N)$ for any integer $n$. Next we show that $\widetilde{\xi{\rm xt}}_{\mathcal{P}}^n(M,N)\cong H^n(\mathcal{C}(\mathbf{S},N))$ for any integer $n$.

Note that $\xymatrix@C=2em{\mathbf{S}\ar[r]^{\mu}&\mathbf{P}\ar[r]^{\pi}&M}$ is a split $\xi$-complete resolution. Then we have a sequence $\xymatrix@C=2em{\mathbf{X}\ar[r]^{\lambda}&\mathbf{S}\ar[r]^{\mu}&\mathbf{P}}$ of complexes such that $\xymatrix@C=2em{X_{i}\ar[r]^{\lambda_i}&S_{i}\ar[r]^{\mu_i}&P_i\ar@{-->}[r]^{\delta_i}&}$ is a split $\mathbb{E}$-triangle for each $i\in\mathbb{Z}$.
  Let $\xymatrix@C=2em{\rho:\mathbf{Q}\ar[r]& N}$ be a $\xi$-projective resolution of $N$. Applying the functor $\mathcal{C}(-,\mathbf{Q})$ to the sequence above, we have the following commutative diagram with exact rows and columns:
$$\xymatrix{
   & 0\ar[d]& 0\ar[d] & 0 \ar[d] & \\
  0 \ar[r] & \overline{\mathcal{C}}(\mathbf{P},\mathbf{Q}) \ar[d] \ar[r] & {\mathcal{C}}(\mathbf{P},\mathbf{Q}) \ar[d] \ar[r] & \widetilde{\mathcal{C}}(\mathbf{P},\mathbf{Q}) \ar[d] \ar[r] & 0 \\
  0 \ar[r] & \overline{\mathcal{C}}(\mathbf{S},\mathbf{Q}) \ar[d] \ar[r] &{\mathcal{C}}(\mathbf{S},\mathbf{Q}) \ar[d] \ar[r] &\widetilde{\mathcal{C}}(\mathbf{S},\mathbf{Q}) \ar[d] \ar[r] &0 \\
 0\ar[r]& \overline{\mathcal{C}}(\mathbf{X},\mathbf{Q}) \ar[d] \ar[r] & {\mathcal{C}}(\mathbf{X},\mathbf{Q}) \ar[d] \ar[r] & \widetilde{\mathcal{C}}(\mathbf{X},\mathbf{Q}) \ar[d] \ar[r] &0 \\
   & 0 &0 &0. &    }
  $$
Since $\mathbf{X}$ is bounded above,  $\overline{\mathcal{C}}(\mathbf{X},\mathbf{Q})={\mathcal{C}}(\mathbf{X},\mathbf{Q})$. For each integer $n\in\mathbb{Z}$, we have
$$H^n(\widetilde{\mathcal{C}}(\mathbf{P},\mathbf{Q}))\cong H^n(\widetilde{\mathcal{C}}(\mathbf{S},\mathbf{Q})).$$
Note that $\mathbf{S}$ is a $\xi$-exact complex such that $\mathcal{C}(\mathbf{S},Q)$ is exact for all $Q\in\mathcal{P}(\xi)$. Then $\overline{\mathcal{C}}(\mathbf{S},\mathbf{Q})$ is exact by \cite[Proposition A.2(b)]{CCLP}. Thus
$$H^n({\mathcal{C}}(\mathbf{S},\mathbf{Q}))\cong H^n(\widetilde{\mathcal{C}}(\mathbf{S},\mathbf{Q})), \forall n\in\mathbb{Z}.$$
Note that $\xymatrix@C=2em{\rho:\mathbf{Q}\ar[r]&N}$ is a $\xi$-projective resolution of $N$. Then for any $\xi$-projective object $P$,
 \noindent$\xymatrix@C=2em{\mathcal{C}(P,\rho):\mathcal{C}(P,\mathbf{Q})\ar[r]&\mathcal{C}(P,N)}$ is a quasi-isomorphism. It is not hard to check that the morphism $\xymatrix@C=2em{{\mathcal{C}}(\mathbf{S},\rho):{\mathcal{C}}(\mathbf{S},\mathbf{Q})\ar[r]&{\mathcal{C}}(\mathbf{S},N)}$ is a quasi-isomorphism and the proof is similar to that of Lemma \ref{lem:3.8}(1). So $$H^n({\mathcal{C}}(\mathbf{S},\mathbf{Q}))\cong H^n({\mathcal{C}}(\mathbf{S},N)), \forall n\in\mathbb{Z}.$$
Hence for each integer $n$, we have
$$H^n(\widetilde{\mathcal{C}}(\mathbf{P},\mathbf{Q}))\cong H^n(\widetilde{\mathcal{C}}(\mathbf{S},\mathbf{Q}))\cong H^n({\mathcal{C}}(\mathbf{S},\mathbf{Q}))\cong H^n({\mathcal{C}}(\mathbf{S},N)).$$
So  $\widetilde{{\rm \xi}xt}_{\mathcal{P}}^n(M,N)\cong H^n({\mathcal{C}}(\mathbf{S},N)) \cong H^n(\mathcal{C}(\mathbf{T},N) \ for \ all \  n\in\mathbb{Z}.$ This completes the proof.
\end{proof}
\begin{rem}\label{rem:4.4} Assume that $\mathcal{C}$ is a triangulated category and $\xi$ is a proper class  of triangles which is closed under suspension (see \cite[Section 2.2]{Bel1}). Let $M$ be an object in $\mathcal{C}$ with finite $\xi$-$\mathcal{G}$projective dimension and $N$  an object in $\mathcal{C}$. By Theorem \ref{thm2}, one can check that for any integer $n$, the complete cohomology group $\widetilde{{\xi}{\rm xt}}_{\mathcal{P}}^n(M,N)$ defined here is exactly the Tate cohomology group $\widehat{\xi{\rm xt}}_{\mathcal{P}}^n(M,N)$ defined by Asadollahi and Salarian in \cite{AS2}.
\end{rem}

Let $M$ be an object in $\mathcal{C}$. It is shown that $\xi$-$\mathcal{G}{\rm pd} M\leq \xi$-${\rm pd} M$, and the equality holds if $\xi$-${\rm pd} M<\infty$ (see \cite[Proposition 5.4]{HZZ}).  It seems natural to ask when $\xi$-$\mathcal{G}{\rm pd} M= \xi$-${\rm pd} M$ provided that $\xi$-$\mathcal{G}{\rm pd} M<\infty$. Motivated by this, we have the following result which is a consequence of Theorems  \ref{thm1} and \ref{thm2}.

%The first application of Theorem \ref{thm:1.1} is the next result characterizing when $\textrm{pd}_R(M)=\mathcal{GC}$-dim$M$ provided that
%$\mathcal{GC}$-dim$M$ $<\infty$. See ?

\begin{cor}\label{cor:3.3} If $\mathcal{P}(\xi)$ is a generating subcategory of $\mathcal{C}$ and $M$ is an object in $\mathcal{C}$ with finite $\xi$-$\mathcal{G}$projective dimension, then the following are equivalent:
\begin{enumerate}
\item $\xi$-$\mathcal{G}{\rm pd} M= \xi$-${\rm pd} M$;

\item $\widetilde{{\rm \xi xt}}_{\mathcal{P}}^i(M,N)=0$ for any integer $i\in\mathbb{Z}$ and any object $N$ in $\mathcal{C}$;

\item $\widetilde{{\rm \xi xt}}_{\mathcal{P}}^i(M,N)=0$ for some integer $i\in\mathbb{Z}$ and any object $N$ in $\mathcal{C}$.

\end{enumerate}
\end{cor}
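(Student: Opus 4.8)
The plan is to deduce the statement from Theorem \ref{thm1} together with the dimension-shifting behaviour of $\xi$-complete cohomology, with all the real content concentrated in the implication $(3)\Rightarrow(2)$. First I would note that $(1)$ and $(2)$ are both equivalent to $\xi$-${\rm pd}\,M<\infty$: condition $(2)$ is literally condition $(b)$ of Theorem \ref{thm1}(1), hence equivalent to $\xi$-${\rm pd}\,M<\infty$; and since $\xi$-$\mathcal{G}{\rm pd}\,M<\infty$ is assumed, \cite[Proposition 5.4]{HZZ} (which gives $\xi$-$\mathcal{G}{\rm pd}\,M\le\xi$-${\rm pd}\,M$, with equality once $\xi$-${\rm pd}\,M<\infty$) shows that $\xi$-${\rm pd}\,M<\infty$ is in turn equivalent to $(1)$. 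As $(2)\Rightarrow(3)$ is trivial, everything reduces to upgrading the vanishing in a single degree supplied by $(3)$ to the vanishing in all degrees required by $(2)$.

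For $(3)\Rightarrow(2)$ I would first exploit that $\widetilde{\xi{\rm xt}}_{\mathcal{P}}^{\ast}(-,-)$ is a cohomological functor (see Remark \ref{fact:2.5'}) that annihilates $\xi$-projectives in its first argument, since $\widetilde{\xi{\rm xt}}_{\mathcal{P}}^{\ast}(P,N)=0$ for $P\in\mathcal{P}(\xi)$ by Theorem \ref{thm1} applied to the object $P$ with $\xi$-${\rm pd}\,P=0$. Writing $g=\xi$-$\mathcal{G}{\rm pd}\,M$ and feeding the defining $\mathbb{E}$-triangles $K_{j+1}\to P_j\to K_j$ of a $\xi$-projective resolution of $M$ into the associated long exact sequences then produces the shifting isomorphism $\widetilde{\xi{\rm xt}}_{\mathcal{P}}^{n}(M,N)\cong\widetilde{\xi{\rm xt}}_{\mathcal{P}}^{n-g}(G,N)$ for all $n$ and all $N$, where $G=K_g$ is $\xi$-$\mathcal{G}$projective by \cite[Proposition 5.2]{HZZ}. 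In particular hypothesis $(3)$ becomes $\widetilde{\xi{\rm xt}}_{\mathcal{P}}^{k}(G,N)=0$ for all $N$, where $k=i-g$.

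Next, because $G$ is $\xi$-$\mathcal{G}$projective it occurs as a syzygy of a complete $\xi$-projective resolution, and every syzygy $Z_j$ of that resolution is again $\xi$-$\mathcal{G}$projective. Since this resolution is unbounded on both sides and each of its $\mathbb{E}$-triangles $Z_{j+1}\to T_j\to Z_j$ has $\xi$-projective middle term, the same shifting slides the fixed degree $k$ to $0$ regardless of its sign and yields $\widetilde{\xi{\rm xt}}_{\mathcal{P}}^{k}(G,N)\cong\widetilde{\xi{\rm xt}}_{\mathcal{P}}^{0}(G',N)$ for a suitable syzygy or cosyzygy $G'$ of $G$, which is itself $\xi$-$\mathcal{G}$projective. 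Hence $\widetilde{\xi{\rm xt}}_{\mathcal{P}}^{0}(G',N)=0$ for all $N$. The decisive step is then to specialize $N=G'$: from $\widetilde{\xi{\rm xt}}_{\mathcal{P}}^{0}(G',G')=0$ and the chain $(d)\Rightarrow(a)\Rightarrow(b)$ of Theorem \ref{thm1}(1) I obtain $\xi$-${\rm pd}\,G'<\infty$ and therefore $\widetilde{\xi{\rm xt}}_{\mathcal{P}}^{j}(G',N)=0$ for all $j$ and $N$. Shifting back along the two chains of isomorphisms returns $\widetilde{\xi{\rm xt}}_{\mathcal{P}}^{n}(M,N)=0$ for all $n$ and $N$, which is exactly $(2)$.

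I expect the main obstacle to be the careful bookkeeping of the two shifting steps: checking that the long exact sequences remain valid in negative cohomological degrees, that the degree $k$ can indeed be transported to $0$ inside the complete resolution of $G$ for either sign of $k$, and that every syzygy of a complete $\xi$-projective resolution is $\xi$-$\mathcal{G}$projective, so that the specialization $N=G'$ legitimately activates Theorem \ref{thm1}(d). Conceptually, however, the whole argument is just the reduction of a vanishing in one arbitrary degree to the degree-zero self-vanishing $\widetilde{\xi{\rm xt}}_{\mathcal{P}}^{0}(G',G')=0$ that Theorem \ref{thm1} is tailored to recognize.
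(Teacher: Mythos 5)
Your argument is correct, but it takes a genuinely different route from the paper's. The paper proves $(3)\Rightarrow(1)$ directly: it invokes Proposition \ref{prop:3.15} to get a split $\xi$-complete resolution $\mathbf{S}\to\mathbf{P}\to M$, uses Theorem \ref{thm2} to translate the vanishing of $\widetilde{\xi{\rm xt}}{}^{n+1}_{\mathcal{P}}(M,-)$ into exactness of $\mathcal{C}(S_{n-1},K_n)\to\mathcal{C}(S_n,K_n)\to\mathcal{C}(S_{n+1},K_n)$, and then extracts a factorization $f_n=hg_{n-1}f_n$; the generating hypothesis on $\mathcal{P}(\xi)$ is used Yoneda-style to conclude $S_{n-1}\cong K_n\oplus K_{n-1}$, so the syzygy $K_n$ of the complete resolution is $\xi$-projective and $\xi$-${\rm pd}\,M<\infty$ follows, with \cite[Proposition 5.4]{HZZ} closing the loop. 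You instead bypass Theorem \ref{thm2} and Proposition \ref{prop:3.15} entirely and run a dimension-shifting argument: using the long exact sequences of Remark \ref{fact:2.5'} together with the vanishing of $\widetilde{\xi{\rm xt}}_{\mathcal{P}}$ on $\xi$-projectives, you slide the single-degree hypothesis first down the $\xi$-projective resolution of $M$ (via \cite[Proposition 5.2]{HZZ}) and then along the two-sided complete resolution of the $\xi$-$\mathcal{G}$projective syzygy $G$ to degree zero, where Theorem \ref{thm1}$(d)\Rightarrow(a)$ applied to $\widetilde{\xi{\rm xt}}{}^{0}_{\mathcal{P}}(G',G')=0$ takes over; here the generating hypothesis enters only through Theorem \ref{thm1}, and the unboundedness of the complete resolution correctly handles both signs of the shifted degree. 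Your route is conceptually cleaner and reduces everything to Theorem \ref{thm1}, but note what it costs: the load-bearing ingredient is the cohomological-functor property of $\widetilde{\xi{\rm xt}}{}^{*}_{\mathcal{P}}(-,N)$ along $\mathbb{E}$-triangles in $\xi$, which the paper merely asserts in Remark \ref{fact:2.5'} without proof; it rests on a horseshoe lemma for $\xi$-projective resolutions producing degreewise split exact sequences compatible with the bounded-above subcomplexes, and you should verify (or cite) that statement explicitly, since the paper's own proof is engineered precisely to avoid relying on it, using only the fully proved Theorem \ref{thm2}.
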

\begin{proof}  $(1)\Rightarrow(2)$ holds by Theorem \ref{thm1}.

$(2)\Rightarrow(3)$ is trivial.

$(3)\Rightarrow(1)$.  Note that $M$ is an object with finite $\xi$-$\mathcal{G}$projective dimension by hypothesis. Then there exists an integer $m$ and a split $\xi$-complete resolution $\xymatrix@C=2em{\mathbf{S}\ar[r]^{\mu}&\mathbf{P}\ar[r]^{\pi}&M}$ of $M$ such that $\mu_{i}$ is an isomorphism for each $i \geq m$. In fact $\mathbf{S}$ is a $\xi$-exact complex such that there is a $\mathcal{C}(-,\mathcal{P}(\xi))$-exact $\mathbb{E}$-triangle in $\xi$ $$\xymatrix@C=2em{K_{i+1}\ar[r]^{g_i}&S_i\ar[r]^{f_i}&K_i\ar@{-->}[r]^{\delta_i}&}$$  for each integer $i$ and $d^{\mathbf{S}}_i=g_{i-1}f_i$.
By (3), there exists an integer $n$ such that $\widetilde{{\rm \xi xt}}_{\mathcal{P}}^{n+1}(M,N)=0$ for any object $N$ in $\mathcal{C}$. It follows from Theorem \ref{thm2} that the following sequence
$$\xymatrix@C=35pt@R=35pt{{\mathcal{C}}(S_{n-1},K_n)\ar[rr]^{\mathcal{C}(d^{\mathbf{S}}_{n},K_n)}
&&{\mathcal{C}}(S_{n},K_n)\ar[rr]^{\mathcal{C}(d^{\mathbf{S}}_{n+1},K_n)}&& {\mathcal{C}}(S_{n+1},K_n)
    }$$
is exact. Note that $\mathcal{C}(d^{\mathbf{S}}_{n+1},K_n)(f_n)=f_nd^{\mathbf{S}}_{n+1}=0$. Then there exists a morphism $h:S_{n-1}\ra K_{n}$ such that $f_{n}=h d^{\mathbf{S}}_{n}= hg_{n-1}f_{n}$. Let $P$ be any object in $\mathcal{P}(\xi)$. Then ${\mathcal{C}}(P,f_n)$ is epic. It follows that ${\mathcal{C}}(P,h){\mathcal{C}}(P,g_{n-1})=\textrm{id}_{{\mathcal{C}}(P,S_n)}$. Hence ${\mathcal{C}}(P,S_{n-1})\cong {\mathcal{C}}(P, K_{n}\oplus K_{n-1})$. Since $\mathcal{P}(\xi)$ is a generating subcategory of $\mathcal{C}$ by hypothesis, we get that $S_{n-1}\cong K_{n}\oplus K_{n-1}$. It follows that $K_{n}\in{\mathcal{P}(\xi)}$, so $K_{i}\in{\mathcal{P}(\xi)}$ for all $i\geq n$. Let $s$ be an integer such that $s\geq$ max\{$n$, $m$\}. Then $K_{s}\in{\mathcal{P}(\xi)}$ and $\xi$-${\rm pd} M \leq s$. So (1) holds by \cite[Proposition 5.4]{HZZ}. This completes the proof.
\end{proof}

We end this paper with the following remark.
\begin{rem} Using arguments analogous to the ones employed in this section, one may show that  an object $M$ in  an extriangulated category $(\mathcal{C}, \mathbb{E}, \mathfrak{s})$  has finite $\xi$-$\mathcal{G}$injective dimension if and only if $M$ admits a (split) $\xi$-complete coresolution, which can help us to give a general technique for computing complete cohomology of objects with finite $\xi$-$\mathcal{G}$injective dimension. The details are left to the reader.
\end{rem}

\bigskip

\renewcommand\refname{\bf References}

\vspace{4mm}
\small

\hspace{-1.2em}\textbf{Jiangsheng Hu}\\
School of Mathematics and Physics, Jiangsu University of Technology,
 Changzhou 213001, China\\
E-mail: jiangshenghu@jsut.edu.cn\\[1mm]
\textbf{Dongdong Zhang}\\
Department of Mathematics, Zhejiang Normal University,
 Jinhua 321004, China\\
E-mail: zdd@zjnu.cn\\[1mm]
\textbf{Tiwei Zhao}\\
School of Mathematical Sciences, Qufu Normal University, Qufu 273165, China\\
E-mail: tiweizhao@qfnu.edu.cn\\[1mm]
\textbf{Panyue Zhou}\\
College of Mathematics, Hunan Institute of Science and Technology, Yueyang 414006, China\\
E-mail: panyuezhou@163.com


\begin{thebibliography}{99}

\bibitem{LAHFUCol01} L. Angeleri-H$\ddot{\rm{u}}$gel, F.U. Coelho, Infinitely generated tilting modules of finite projective  dimension, {\it Forum Math.} {\bf 13} (2001) 239-250.
\bibitem{AS1} J. Asadollahi, Sh. Salarian, Gorenstein objects in triangulated categories, {\it J. Algebra} {\bf 281} (2004) 264-286.
\bibitem{AS2} J. Asadollahi, Sh. Salarian, Tate cohomology and Gorensteinness for triangulated categories, {\it J. Algebra} {\bf 299} (2006) 480-502.
\bibitem{ASCT} J. Asadollahi, Sh. Salarian, Cohomology theories for complexes, {\it J. Pure Appl. Algebra} {\bf 210} (2007) 771-787.
%\bibitem{AS1} J. Asadollahi and S. Salarian, Gorenstein objects in triangulated categories, J. Algebra 281 (2004) 264-286.

%\bibitem{AB} M. Auslander and M. Bridger, Stable Module Theory, Mem. Amer. Math. Soc., Vol. 94, 1969.
 \bibitem{AFH} L.L. Avramov, H.-B. Foxby, S. Halperin, {\it Differential Graded Homological Algebra}, preprint, 2009.

\bibitem{AM} L.L. Avramov,  A. Martsinkovsky, Absolute, relative, and Tate cohomology of modules of finite Gorenstein dimension, {\it Proc. London Math. Soc.} {\bf 85} (3) (2002) 393-440.

\bibitem{Bel1} A. Beligiannis, Relative homological algebra and purity in  triangulated categories, {\it J. Algebra} {\bf 227} (1) (2000) 268-361.
\bibitem{Beliandkrause}A. Beligiannis, H. Krause, Thick subcategories and virtually Gorenstein algebras, {\it Illinois J. Math.} {\bf 52} (2008)
551-562.
\bibitem{Beli and Reiten} A. Beligiannis, I. Reiten, \emph{Homological and Homotopical Aspects of Torsion Theories},  Amer. Math. Soc., Rhode Island, 2007.
%\bibitem{BM} D. Bennis and N. Mahdou, \emph{Global Gorenstein dimensions},
%Proc. Amer. Math. Soc. 138(2) (2010) 461-465.

\bibitem{BF} D.J. Benson, J.F. Carlson, Products in negative cohomology, {\it J. Pure Appl. Algebra} {\bf 82} (1992) 107-130.

\bibitem{BP}  D. Bravo, M.A. P$\acute{e}$rez,  Finiteness conditions and cotorsion pairs,  {\it J. Pure Appl. Algebra} {\bf 221} (2017) 1249-1267.



\bibitem{B"u} T. B\"{u}hler, Exact categories, {\it Expo. Math.} {\bf 28} (2010) 1-69.


%\%bibitem{Bel2} A. Beligiannis, The homological theory of contravariantly finite subcategories: Auslander-Buchweitz contexts, Gorenstein categories and (co-)stablization, Comm. Algebra 28 (2000) 4547-4596.
%    \bibitem{B"u} T. B\"{u}hler, Exact categories, Expo. Math. 28 (2010) 1-69.
\bibitem{CE} H. Cartan, S. Eilenberg, \emph{Homological Algebra}, Princeton Univ. Press, Princeton, 1956.
\bibitem{CCLP} O. Celikbas, L. W. Christensen, L. Liang, G. Piepmeyer, Stable homology over associative rings, {\it Trans. Amer. Math. Soc.} {\bf 369} (2017) 8061-8086.

\bibitem{chen} X.W. Chen, Homotopy equivalences induced by balanced pairs, {\it J. Algebra} {\bf 324} (2010) 2718-2731.

\bibitem{cfh} L.W. Christensen, A. Frankild, H. Holm, On Gorenstein projective, injective and flat dimensions-
a functorial description with applications, {\it J. Algebra} {\bf 302} (2006) 231-279.

%\bibitem{CFH} L.W. Christensen,  H.-B. Foxby and H. Holm, \emph{Derived Category Methods in Commutative Algebra},
 %preprint, 2019.

\bibitem{Colby1995} R. Colpi, J. Trlifaj, Tilting modules and tilting torsion theories, {\it J. Algebra} {\bf 178} (1995)
614-634.
%\bibitem{EJ1} E. E. Enochs and O. M. G. Jenda, Gorenstein injective and projectivve modules, Math. Z. 220 (1995) 611-633.
\bibitem{EJ2} E.E. Enochs, O.M.G. Jenda, \emph{Relative Homological Algebra}, Walter de Gruyter, Berlin-New York, 2000.
%\bibitem{Gillespie} J. Gillespie, \emph{Model structures on exact categories}, J. Pure Appl. Algebra 215 (2011) 2892-2902.
%\bibitem{Holm} H. Holm, Gorenstein homological dimensions, J. Pure Appl. Algebra 189 (2004) 167-193.
%\bibitem{HCc} M. Hovey, {Cotorsion pairs, model category structures, and representation theory},
%Math. Z. 241 (2002) 553-592.
%\bibitem{HC} M. Hovey, {Cotorsion pairs and model categories},
%in Interactions between homotopy theory and algebra, Contemp. Math.
%436. Amer. Math. Soc., Providence, RI, 2007, 277-296.



\bibitem{sergio} S. Estrada, M.A. P\'{e}rez, H.Y. Zhu,  Balanced pairs, cotorsion triplets and quiver representations, {\it Proc. Edinb. Math. Soc.} {\bf 63} (2020) 67-90.
%
%\bibitem{GG} T.V.  Gedrich and K.W. Gruenberg, \emph{Complete cohomological functors on groups}, Topol. Appl. 25 (1987) 203-223.

\bibitem{Goichot} F. Goichot, Homologie de Tate-Vogel $\acute{e}$quivariante, {\it J. Pure Appl. Algebra} {\bf 82} (1992) 39-64.

\bibitem{GT} R. G$\ddot{\rm{o}}$bel, J. Trlifaj, \emph{Approximations and Endomorphism Algebras of Modules}, Walter de Gruyter, Berlin-New York, 2006.
\bibitem{hG} H. Holm, Gorenstein homological dimensions, {\it J. Pure Appl. Algebra} {\bf 189} (2004) 167-193.
\bibitem{HZZ} J.S. Hu, D.D. Zhang, P.Y. Zhou, Proper classes and Gorensteinness in extriangulated categories,
{\it J. Algebra} {\bf 551} (2020) 23-60.
\bibitem{HZZ1} J.S. Hu, D.D. Zhang, P.Y. Zhou, Gorenstein homological dimensions for extriangulated categories
(arXiv:1908.00931, 2019).

\bibitem{Huang} Z.Y. Huang, Generalized tilting modules with finite injective dimension, {\it J. Algebra} {\bf 311} (2007) 619-634.
\bibitem{Krause} H. Krause, Smashing subcategories and the telescope conjecture: An algebraic approach,
{\it Invent. Math.} {\bf 139} (2000) 99-133.
\bibitem{Mantese} F. Mantese, I. Reiten, Wakamatsu tilting modules, {\it J. Algebra} {\bf 278} (2004) 532-552.

%\bibitem{LN} Y. Liu and H. Nakaoka. Hearts of twin cotorsion pairs on extriangulated categories, J. Algebra 528 (2019) 96-149.
%

\bibitem{Mislin} G. Mislin, Tate cohomology for arbitrary groups via satellites, {\it Topology Appl.} {\bf 56} (1994) 293-300.
\bibitem{NP}  H. Nakaoka, Y. Palu,  Extriangulated categories, Hovey twin cotorsion pairs and model structures, {\it Cahiers de Topologie et Geometrie Differentielle Categoriques}, Volume LX-2 (2019) 117-193.
%\bibitem{RL1} W. Ren and Z. K. Liu, \emph{Gorenstein homological dimensions for triangulated categories}, J. Algebra 410 (2014) 258-276.
\bibitem{RL2} W. Ren, Z.K. Liu, Balance of Tate cohomology  in triangulated categories, {\it Appl. Categ.  Struct.} {\bf 23} (6) (2015) 819-828.
\bibitem{RL3} W. Ren, R.Y. Zhao, Z.K. Liu, Cohomology theoreies in triangulated categories, {\it Acta Math. Sin. Engl. Ser.} {\bf 32} (11) (2016) 1377-1390.

\bibitem{vg} O. Veliche, Gorenstein projective dimension for complexes,
{\it Trans. Amer. Math. Soc.} {\bf 358} (2006) 1257-1283.

%\bibitem{Ver} J. L. Verdier, Cat\'{e}gories d\'{e}riv\'{e}es: \'{e}tat 0, in: SGA $4\frac{1}{2}$, in: Lecture Notes in Math., vol. 569, Springer-Verlag, Berlin, 1977, pp. 262-311.
% \bibitem{Yang}   X. Y. Yang, Model structures on triangulated categories. Glasg. Math. J. 57(2) (2015) 263-284.

\bibitem{Wamastilting} T. Wakamatsu, Tilting modules and Auslander's Gorenstein property, {\it J. Algebra} {\bf 275} (2004) 3-39.

\bibitem{Wang2019}J. Wang, Y.X Li, J.S. Hu, When the kernel of a complete hereditary cotorsion pair is the additive closure of a tilting module, {\it J. Algebra} {\bf 530} (2019) 94-113.
 \bibitem{WLH}  J.F. Wang, H.H. Li, Z.Y. Huang, Applications of exact structures in abelian categories, {\it Publ. Math.
Debrecen} {\bf 88} (2016) 269-286.
 \bibitem{Wei} J.Q. Wei, Auslander bounds and homological conjectures, {\it Rev. Mat. Iberoam.} {\bf 27} (2011) 871-884.
 \bibitem{ZAD} F. Zareh-Khoshchehreh, M. Asgharzadeh,  K. Divaani-Aazar, Gorenstein homology, relative pure homology
and virtually Gorenstein rings, {\it J. Pure Appl. Algebra} {\bf 218} (12) (2018) 2356-2366.

 \bibitem{ZZ} P.Y. Zhou, B. Zhu,  Triangulated quotient categories revisited, {\it J. Algebra} {\bf 502} (2018) 196-232.

\end{thebibliography}
\end{document}